\documentclass[11pt]{article}

\usepackage[english]{babel}
\usepackage{graphicx}
\usepackage[colorlinks]{hyperref}
\usepackage[dvipsnames]{xcolor}

\usepackage{geometry}
\usepackage[backgroundcolor=white,linecolor=red,bordercolor=red]{todonotes}
\usepackage[shortlabels]{enumitem}
\setenumerate{label=(\roman*)}

\usepackage{hyperref}
\newcommand\myshade{100}
\hypersetup{
  linkcolor  = RoyalBlue!\myshade!black,
  citecolor  = ForestGreen!\myshade!black,
  urlcolor   = RedOrange!\myshade!black,
  colorlinks = true,
}


\usepackage{amsmath}
\usepackage{amssymb}
\usepackage{amsfonts}
\usepackage{amsthm}
\usepackage{mathtools}
\usepackage{bm}
\usepackage{bbold}
\usepackage{tikz-cd}

\usepackage[capitalise]{cleveref}
\crefformat{equation}{(#2#1#3)}
\crefname{subsection}{Section}{Sections}

\theoremstyle{plain}
\newtheorem{thm}{Theorem}[section]
\newtheorem{lem}[thm]{Lemma}
\newtheorem{prop}[thm]{Proposition}
\newtheorem{cor}[thm]{Corollary}

\theoremstyle{definition}
\newtheorem{defn}[thm]{Definition}
\newtheorem{example}[thm]{Example}
\newtheorem{question}[thm]{Question}

\theoremstyle{remark}
\newtheorem*{rem}{Remark}


\makeatletter
\def\@fnsymbol#1{\ensuremath{\ifcase#1\or \dagger\or \ddagger\or
           \dagger\dagger
           \or \ddagger\ddagger \else\@ctrerr\fi}}
\makeatother

\newcommand\N{\ensuremath{\mathbb{N}}}
\newcommand\R{\ensuremath{\mathbb{R}}}

\renewcommand\O{\ensuremath{\varnothing}}

\newcommand\C{\ensuremath{\mathbb{C}}}

\newcommand{\el}[1]{\ensuremath{\ell_{#1}}}

\newcommand{\Lpos}[1]{\ensuremath{\mathcal{L}_+(#1)}}
\newcommand{\Lr}[1]{\ensuremath{\mathcal{L}_r(#1)}}


\DeclareMathOperator{\barespn}{span}

\DeclareMathOperator{\range}{range}
\DeclareMathOperator{\Fin}{Fin}

\usepackage[giveninits=true,maxbibnames=99]{biblatex}
\usepackage{csquotes}

\newcommand{\keywords}[2]{%
        \textbf{Keywords} #1 \\
        \textbf{MSC2020} #2
}

\addbibresource{../library.bib}

\DeclareMathOperator*{\olim}{o-lim}

\title{Band projections and order idempotents\\in Banach lattice
algebras}
\author{David Muñoz-Lahoz\footnotemark[2]}
\date{}

\begin{document}

\maketitle

\begin{abstract}
    Motivated by recent work about band projections on spaces of
    regular operators over a Banach lattice, given a Banach lattice
    algebra $A$, we will say an element $a \in A_+$ is a band
    projection if the multiplication operator $L_aR_a\in \mathcal
    L_r(A)$ is a band projection. Our aim in this note is to explore
    the relations between this and the notion of order idempotent
    (those elements $a$ in a Banach lattice algebra $A$ with identity
    $e$ such that $0\leq a\leq e$ and $a^2=a$). We also revisit
    the properties of the ideal generated by the identity on a Banach
    lattice algebra, motivated by those of the centre of a Banach
    lattice.
\end{abstract}
\keywords{Banach lattice algebra; band projection; order idempotent;
regular operator}{46B42; 06F25; 46A32; 47A10; 47B65; 47L10}

\footnotetext[2]{\texttt{david.munnozl (at) uam (dot) es}

Instituto de Ciencias Matemáticas (CSIC-UAM-UC3M-UCM) Consejo Superior
de Investigaciones Científicas, C/ Nicolás Cabrera, 13–15, Campus de
Cantoblanco UAM 28049, Madrid, Spain.}


\section{Introduction}

This is a follow-up of the paper \cite{munoz-lahoz_tradacete2024},
where we studied band projections in spaces of regular operators. Our
aim here is to explore analogous results in the more general context
of Banach lattice algebras. Besides contributing to the theory of
these structures, this serves the purpose of showing that some
properties of spaces of regular operators hold for arbitrary Banach
lattice algebras.

Given a Banach lattice $X$, we can order the space of
operators on $X$ using the cone of positive operators,
that is, setting an operator $T\colon X\to X$ to be positive if
$T(X_+)\subseteq X_+$. In general, however, not every operator can be
expressed as the difference of two positive operators, so the positive
cone need not be generating. Those operators that are
in the span of the positive cone are called regular operators.

The space of regular operators $\Lr X$ is an ordered vector space, but
it need not be a lattice. It is a remarkable result by Freudenthal,
Kantorovich and Riesz that when $X$ is order complete, $\Lr X$
is again a Banach lattice when equipped with the regular norm (defined
as $\|T\|_r=\|\,|T|\,\|$ for $T \in \Lr X$). Since the composition of positive operators
is again positive, $\Lr X$ is also a Banach algebra.

So in $\Lr X$ a lattice and an algebraic structure coexist, with the
compatibility condition that the product of positive elements is
positive. In \cite{munoz-lahoz_tradacete2024} we used this structure
to study band projections on $\Lr X$. More precisely,
if $\{P_\lambda \}_{\lambda \in \Lambda }$ is a family of pairwise disjoint band
projections on $X$, and $\Gamma \subseteq \Lambda \times \Lambda $, we
defined for positive operators $T \in \Lpos X$ the map
\begin{equation}\label{eq:inner_intro}
\mathcal{P}_\Gamma (T)=\bigvee_{(\alpha ,\beta )\in \Gamma } P_\alpha
TP_\beta.
\end{equation}
We showed that $\mathcal{P}_\Gamma \colon \Lpos X\to \Lpos X$
extends to a unique band projection on $\Lr X$, and called band
projections of this form inner.

Even though the definition of inner band projections uses only the
lattice and algebraic operations of $\Lr X$, the proof that
\cref{eq:inner_intro} actually defines a band projection relied
heavily on the expression of the Riesz--Kantorovich formulae for the
lattice operations in $\Lr X$. One of
the main motivations for this paper is to determine whether we can
extend inner band projections to more general spaces in which a
lattice and an algebraic operation coexist in a manner analogous to
that of $\Lr X$.

This leads to the notion of Banach lattice algebras. Formally, a Banach lattice algebra is a Banach lattice together with a
Banach algebra structure in which the product of positive elements is
positive. One of the most important open problems in the theory of
Banach lattice algebras is the following:
\begin{question}[{\cite[Question 5.3]{wickstead2017_questions}}]
    Is every Banach lattice algebra isometrically (isomorphic) to a
    closed subalgebra and sublattice of some space of regular
    operators?
\end{question}
It is relevant to this question, and therefore to the theory, to
understand which properties of spaces of regular operators depend only
on the Banach lattice algebra structure, and not on the fact that we
are actually dealing with operators. It is in this line of
investigation where our paper fits in.

The first step towards an abstract formulation of inner band
projections is to determine which elements of a Banach lattice algebra
$A$ should play the role of the band projections $\{P_\lambda
\}_{\lambda \in \Lambda }$. It is a classical result
that an operator $P\colon X\to X$ on a Banach lattice $X$ is a band
projection if and only if $P^2=P$ and $0\le P\le I_X$. When $A$
has an identity $e$, it seems then
natural to regard the elements $p \in
A$ satisfying $p^2=p$ and $0\le p\le e$ as the analogue of band projections. These
elements, called order idempotents, were already considered by E.\
A.\ Alekhno in the context of ordered Banach algebras (see
\cite{alekhno2012}). For
instance, Alekhno showed that the set of order idempotents forms a Boolean
algebra, just like the set of band projections on a Banach lattice
does. We
shall give an alternative proof of this fact, and show that
the set of order idempotents is order closed.

But for this we will
need to study the ideal generated by the identity element in $A$. This
ideal can be regarded as
the abstract analogue of the center of an order complete Banach
lattice.  We will collect results by L.\ Martignon
\cite{martignon1980}, C.\ B.\ Huijsmans \cite{huijsmans1995} and
E.\ Scheffold \cite{scheffold1984} to show that this ideal posseses
the most fundamental
properties of the center: it is lattice and algebra isometric to a
space of continuous functions on a compact Hausdorff space, it is a
projection band, and it is an inverse closed subalgebra of $A$.

And what happens if we don't have an identity? Can we still define
elements that play the role of band projections? Recently, it was
proved in \cite{munoz-lahoz_tradacete2024} that band projections are characterized by their
multiplication operators in the following way: a positive operator
$P\colon X\to X$ defined on an order complete Banach lattice $X$ is a
band projection if and only if the multiplication operator
\[
\begin{array}{cccc}
L_PR_P\colon& \Lr X & \longrightarrow & \Lr X \\
        & T & \longmapsto & PTP \\
\end{array}
\]
is a band projection. Band projections could therefore be defined as those
elements $p \in A_+$ for which the operator
\[
\begin{array}{cccc}
L_pR_p\colon& A & \longrightarrow & A \\
        & a & \longmapsto & pap \\
\end{array}
\]
is a band projection. This is analogue to the approach taken
by K.\ Vala in \cite{vala1967} to define compact elements in normed
algebras.

Several properties of band projections, in this sense, are
studied. It is shown that the set of band projections
need not be order closed or even commute; in particular, when we have
an identity, band projections and order idempotents may not be the
same. The relation between order idempotents and band projections
appears to be subtle, as it is illustrated with several
finite-dimensional examples. Some conditions guaranteeing that a band
projection is order idempotent are also given.

Much better behaved is the set of left and right band projections,
defined as those positive elements in $A$ for which both the left and right
multiplication operators are band projections.
These elements coincide with order idempotents when
an identity is present, and they always commute. Moreover, they
provide the appropiate generalization of band projections that allows
for the abstract construction of inner band projections we were looking
for. Our main result (\cref{thm:innerbp}) is that \eqref{eq:inner_intro} defines a band
projection in any order complete Banach
lattice algebra as long as the $\{P_\lambda \}_{\lambda \in
\Lambda }$
are taken to be left and right band projections satisfying $P_\alpha P_\beta
=\delta _{\alpha \beta }P_\alpha $ (where $\delta _{\alpha \beta }$ is
the Kronecker delta). To prove it, we cannot resort directly to the
Riesz--Kantorovich formulae, as done in
\cite{munoz-lahoz_tradacete2024}; instead, we have to work with the
multiplication operators $L_{P_\alpha }R_{P_\beta }$ to translate
properties from elements in $A$ to operators acting on $A$ (i.e., to elements
in $\Lr A$). Even before reaching inner band projections, many of our
results will use this technique, that allows for the generalization of
properties from spaces of regular operators to Banach lattice
algebras. We are not aware of any other systematic use of this idea in
the context of Banach lattice algebras, but one can find it already mentioned or
implicitly used in some parts of the literature (for instance, in
\cite{bernau_huijsmans1990_unit,huijsmans1988,martignon1980,wickstead2017_questions}).

The content of this note is organized in two big sections (besides this
introduction): \cref{sec:regop}, which collects some well-known facts
about spaces of regular operators, and \cref{sec:bla}, which
generalizes the notions in \cref{sec:regop} to Banach lattice
algebras. The reader familiar with regular operators can
jump directly to \cref{sec:bla} on page~\pageref{sec:bla}; the reader familiar with
regular operators and Banach lattice algebras can skim through
\cref{sec:bla_intro,sec:bla_ideal} to see the notation, and then start
reading at \cref{sec:bpoi} on page~\pageref{sec:bpoi}, where most
of the original results are contained.

These two big sections are, in turn, divided into subsections that
follow a parallel structure. A notion introduced for the regular
operators in a subsection of \cref{sec:regop} is then studied for
general Banach lattice algebras in the corresponding subsection of
\cref{sec:bla}. More precisely, we have the following bijection:
\begin{center}
    \begin{tabular}{ll}
        \textbf{2\ \ Spaces of regular operators} & \textbf{3\ \ Banach
        lattice algebras}\\
            2.1\ \ Regular operators & 3.1\ \ Banach lattice
            algebras\\
            2.2\ \ Band projections & 3.3\ \ Band projections and
            order idempotents\\
            2.3\ \ The center of a Banach lattice & 3.2\ \ The ideal
            generated by the unit element\\
            2.4\ \ Complexification and spectra & 3.4\ \
            Complexification and spectra\\
            2.5\ \ Inner band projections & 3.5\ \ Inner band
            projections\\
    \end{tabular}
\end{center}

\section{Spaces of regular operators}\label{sec:regop}

In this section we review some properties of the space of
regular operators over a Banach lattice that will motivate
our study of Banach lattice algebras. The reader familiar with the
topic can therefore skip it, except maybe for
\cref{thm:bp_multop,cor:bp_multop,sec:regop_innerbp}. With these
exceptions, all the contents are classical, and may be found in
\cite{abramovich_aliprantis2002,aliprantis_burkinshaw2006,meyer-nieberg1991}.

\subsection{Regular operators}\label{sec:regop_intro}

Let $X$ and $Y$ be Banach lattices, and let $\mathcal{L}(X,Y)$ denote
the set of (linear and bounded) operators from $X$ to $Y$. We say that an operator $T\colon
X\to Y$ is \emph{positive} if $T(X_+)\subseteq Y_+$. The set of
positive operators $\Lpos{X,Y}$ forms a cone in $\mathcal{L}(X,Y)$. With
this cone, $\mathcal{L}(X,Y)$ is an ordered Banach space, but it need
not be a lattice; in fact, it may even be the case that $\Lpos{X,Y}-\Lpos{X,Y}$ is a
proper subset of $\mathcal{L}(X,Y)$. The elements of
$\Lpos{X,Y}-\Lpos{X,Y}$ are called \emph{regular operators}, and the set
of regular operators is denoted by $\Lr{X,Y}$. Regular operators are
those operators that can be written as the difference of two positive
operators; equivalently, an operator $T$ is regular if there exists a
positive operator $S \in \Lpos{X,Y}$ such that $T\le S$.

The space of regular operators $\Lr{X,Y}$ need not be a Banach lattice either.
It was first shown by Freudenthal,
Kantorovich and Riesz that, when $Y$ is \emph{order complete} (that is, when any bounded above
set has supremum), $\Lr{X,Y}$ is a vector lattice. Later, an
appropiate norm was introduced in this
space, so as to make it a Banach lattice. More precisely:

\begin{thm}
    Let $X$ and $Y$ be Banach lattices. If $Y$ is order complete, then
    $\Lr{X,Y}$ is a Banach lattice, with supremum and infimum
    given by
    \begin{align}
        (S\vee T)x&=\sup \{\, Su+Tv : u,v \in X_+,\, u+v=x \,
        \},\label{eq:rk1}\\
        (S\wedge  T)x&=\inf \{\, Su+Tv : u,v \in X_+,\, u+v=x \,
        \},\label{eq:rk2}
    \end{align}
    where $S,T \in \Lr{X,Y}$ and $x \in X_+$, and with norm
    \begin{equation}\label{eq:regnorm}
        \|T\|_r=\|\,|T|\,\|.
    \end{equation}
    Moreover, $\Lr{X,Y}$ is order complete.
\end{thm}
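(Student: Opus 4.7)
The plan is to build up the Banach lattice structure of $\Lr{X,Y}$ in three stages: first establish the vector lattice structure and the Riesz--Kantorovich formulae, then verify that the regular norm turns this vector lattice into a Banach lattice, and finally upgrade to order completeness.

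For the vector lattice structure, given $S,T \in \Lr{X,Y}$, I would begin by showing that the set $\mathcal{S}_x = \{Su + Tv : u,v \in X_+,\ u+v=x\}$ is bounded above in $Y$ for each $x \in X_+$: writing $S = S_1 - S_2$ and $T = T_1 - T_2$ with $S_i, T_j \in \Lpos{X,Y}$, one has $Su + Tv \leq S_1 u + T_1 v \leq (S_1+T_1)x$. Order completeness of $Y$ then allows us to define $(S\vee T)(x) := \sup \mathcal{S}_x$. The main work is to show this gives an additive map on $X_+$, for which I would invoke the Riesz decomposition property of $X$: a decomposition $u+v = x_1+x_2$ with all terms positive refines into $u = u_1+u_2$, $v = v_1+v_2$ with $u_i+v_i = x_i$, and decompositions of the individual $x_i$ combine the other way. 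Positive homogeneity being immediate by scaling $u,v$, the map $S \vee T$ extends by linearity to $X$ and, being dominated by $S_1+T_1$, lies in $\Lr{X,Y}$; a direct check shows it is indeed the supremum of $\{S,T\}$. The infimum formula then follows from $S \wedge T = -((-S)\vee(-T))$.

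For the Banach lattice structure, once $|T| = T \vee (-T)$ exists, $\|T\|_r = \||T|\|$ is immediately a lattice norm: positive homogeneity is clear, the triangle inequality follows from $|S+T| \leq |S|+|T|$, and monotonicity of $\|\cdot\|$ on $Y_+$ gives the lattice property. For completeness, given a $\|\cdot\|_r$-Cauchy sequence $(T_n)$, the bound $\|T\| \leq \|T\|_r$ makes $(T_n)$ Cauchy in $\mathcal{L}(X,Y)$ and hence pointwise convergent to some $T \in \mathcal{L}(X,Y)$; using $\||T_n-T_m|\| \to 0$ and passing to pointwise limits in the Riesz--Kantorovich formula gives $T \in \Lr{X,Y}$ and $\|T-T_n\|_r \to 0$ by a standard $\varepsilon/2$ argument.

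For order completeness, given an upward directed set $\mathcal{F} \subseteq \Lr{X,Y}$ bounded above by $R$, I would define $T(x) = \sup_{S \in \mathcal{F}} S(x)$ for $x \in X_+$, using order completeness of $Y$; additivity on $X_+$ now follows directly from the directedness of $\mathcal F$, which is the main simplification over the vector-lattice step. Extending $T$ linearly to $X$ and checking $T \leq R$ yields $T \in \Lr{X,Y}$, and by construction it is the supremum of $\mathcal{F}$. The main technical obstacle throughout is the additivity argument for $(S \vee T)$ on $X_+$, which depends essentially on the Riesz decomposition property; the rest is bookkeeping built on top of that core lemma.
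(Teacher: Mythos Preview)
The paper does not actually prove this theorem: it is stated as a classical result due to Freudenthal, Kantorovich and Riesz, with the reader referred to the standard references \cite{abramovich_aliprantis2002,aliprantis_burkinshaw2006,meyer-nieberg1991} for details. Your sketch follows precisely the standard textbook route found in those references---Riesz decomposition for additivity of the Riesz--Kantorovich formula, then the routine norm and completeness verifications---so there is nothing to compare beyond noting that your outline is the expected one and looks correct.
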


Equations \cref{eq:rk1} and \cref{eq:rk2} and their derivatives (e.g., $|T|=\sup \{\,
|Tu| : -x\le u\le x \, \} $) are called the \emph{Riesz--Kantorovich
formulae}. The norm defined in \cref{eq:regnorm} is called the
\emph{regular norm}.

When $X=Y$, we will simplify our notation from $\mathcal{L}(X,X)$ to
$\mathcal{L}(X)$, from $\Lr{X,X}$ to $\Lr X$, and so on. In this case, the space $\mathcal{L}(X)$
is a Banach algebra with multiplication given by composition, and norm
given by operator norm. It is clear that the composition of positive operators is again
positive. From this, and the fact that the modulus is
submultiplicative (i.e., $|TS|\le |T| |S|$), it follows
that $\Lr X$ is a Banach algebra with respect to the regular norm. If $X$ is order
complete, we have seen that $\Lr X$ is a Banach lattice. The
simultaneous existence of these two structures will be our object of
study in \cref{sec:bla}.

\subsection{Band projections}

The interaction between the lattice and algebraic structures of $\Lr
X$ manifests itself clearly when we study band projections. A subspace $B$ of $X$ that is closed
under lattice operations (i.e., if $y\in B$ then $|y| \in B$)
is called a \emph{sublattice}. A sublattice
that is solid (i.e., if $|x|\le |y|$ and $y \in B$ then $x \in B$) is
called an \emph{ideal}. An ideal that is order closed (i.e., if
$(y_\gamma )_{\gamma}\subseteq B$ is an increasing net and
$\sup_\gamma y_\gamma $ exists in $X$, then $\sup_\gamma y_\gamma \in
B$) is called a \emph{band}. A band for which $X=B\oplus B^{d}$, where
\[
B^{d}=\{\, x \in X : |x|\wedge |y|=0\text{ for all }y\in B \, \},
\]
is called a \emph{projection band}. In this case we have a positive
projection $P\colon X\to X$ onto $B$, called a \emph{band projection}.
It turns out that band
projections can be nicely characterized in terms of the algebraic and
order structure in $\mathcal{L}(X)$.

\begin{thm}\label{thm:regop_bpchar}
    Let $X$ be a Banach lattice. An operator $P\colon X\to X$ is a band projection if and only if $P^2=P$ and
    $0\le P\le I_X$.
\end{thm}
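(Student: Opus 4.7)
The plan is to reduce the nontrivial direction to a single disjointness lemma and then manipulate disjoint complements.

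The forward implication is immediate: a band projection $P$ onto a projection band $B$ along $B^d$ is idempotent by construction, and for $x \in X_+$ both components of the decomposition $x = Px + (I_X - P)x \in B \oplus B^d$ are positive because bands are solid, yielding $0 \le P \le I_X$.

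For the converse, assume $P^2 = P$ and $0 \le P \le I_X$. The key step is to show that, for every $x \in X_+$,
\[
Px \wedge (I_X - P)x = 0.
\]
Letting $w = Px \wedge (I_X - P)x \ge 0$, from $w \le Px$ and $w \le (I_X - P)x$ we obtain $Pw \le P^2 x = Px$ and $Pw \le P(I_X - P)x = 0$, so $Pw = 0$. Since $I_X - P$ is again an idempotent with $0 \le I_X - P \le I_X$, the same argument applied to it gives $(I_X - P)w = 0$, and therefore $w = Pw + (I_X - P)w = 0$.

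Finally, I would set $B = P(X) = \ker(I_X - P)$ and $C = (I_X - P)(X) = \ker P$, and check that $B = C^d$ and $C = B^d$. For $B \subseteq C^d$, first observe that $|Py| \le P|y|$ combined with $P \le I_X$ forces $|y| = P|y|$ whenever $y \in B$ (and analogously for $C$); then for $u = |y| \wedge |z|$ with $y \in B$, $z \in C$, the inequalities $u \le |y| = P|y|$ and $u \le |z| = (I_X - P)|z|$ give $(I_X - P)u = 0$ and $Pu = 0$, hence $u = 0$. For the reverse inclusion $C^d \subseteq B$, it suffices (by splitting into positive and negative parts) to handle $x \in C^d \cap X_+$; since $(I_X - P)x \in C_+$ and $(I_X - P)x \le x$, the disjointness defining $C^d$ forces $(I_X - P)x = 0$, so $x = Px \in B$. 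A symmetric argument yields $C = B^d$, so $X = B \oplus B^d$ and $P$ is the band projection onto the projection band $B$. The main obstacle is the disjointness lemma; everything after it is formal manipulation of disjoint complements.
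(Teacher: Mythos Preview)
Your argument is correct. The paper itself does not prove this theorem: it is stated as a classical fact with references to the standard monographs, so there is no ``paper's own proof'' to compare against. Your route---first establishing the disjointness $Px \wedge (I_X-P)x = 0$ via the idempotence relations $P(I_X-P) = (I_X-P)P = 0$, then identifying $P(X)$ and $(I_X-P)(X)$ as mutual disjoint complements---is exactly the standard one found in those references.

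One small imprecision in the forward direction: saying the components of $x = Px + (I_X-P)x$ are positive ``because bands are solid'' is not quite the mechanism. Solidity alone does not give positivity of the pieces; what you actually use is that $Px$ and $(I_X-P)x$ are \emph{disjoint}, so $|Px| + |(I_X-P)x| = |x| = x$, and then the uniqueness of the decomposition $B \oplus B^d$ forces $Px = |Px|$ and $(I_X-P)x = |(I_X-P)x|$. This is routine, but worth stating accurately since the rest of your write-up is careful.
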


In general, not all bands are projection bands. When $X$ is order complete,
however, every band is a projection band. In this case, we will talk
indistinguishably about bands, projection bands, and band projections.

Finite infima and suprema of band projections yield new band projections.
Surprisingly enough, lattice operations between band projections
are determined by the algebraic and linear operations only.

\begin{thm}\label{thm:regop_boole}
    Let $X$ be a Banach lattice. The set of band projections on $X$
    forms a Boolean algebra with operations
    \[
    \overline{P}=I_X-P,\quad P_1\vee P_2=P_1+P_2-P_1P_2,\quad P_1\wedge
    P_2=P_1P_2,
    \]
    minimum the zero map, and maximum the identity map.
    Moreover, suprema and infima coincide with those in $\Lr X$.
\end{thm}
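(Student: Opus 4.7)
The plan is to use the characterization of band projections from \cref{thm:regop_bpchar} as positive operators $P$ satisfying $P^2=P$ and $P\le I_X$. The proof will split into three parts: (i) prove that any two band projections commute; (ii) check that the formulas $P_1P_2$, $I_X-P$, and $P_1+P_2-P_1P_2$ yield band projections and verify the Boolean algebra axioms; (iii) identify these operations with the lattice operations inherited from $\Lr X$.

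The key observation for commutativity is that every band projection preserves the bands of every other band projection. Given band projections $P, Q$ with associated projection band $B_P$, for any $x\in B_P\cap X_+$ one has $0\le Qx\le x$ (using $0\le Q\le I_X$), so solidity of the ideal $B_P$ forces $Qx\in B_P$. Likewise $B_P^d$ is an ideal preserved by $Q$, so by linearity $Q(B_P)\subseteq B_P$ and $Q(B_P^d)\subseteq B_P^d$. Writing $x=Px+(I_X-P)x$, applying $Q$, and invoking uniqueness of the band decomposition yields $PQx=QPx$.

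With commutativity in hand, the algebraic identities $(P_1P_2)^2=P_1^2P_2^2=P_1P_2$ and $0\le P_1P_2\le P_1\le I_X$ (the middle bound using $P_2\le I_X$ and positivity of $P_1$) show via \cref{thm:regop_bpchar} that $P_1P_2$ is a band projection. A direct computation shows $I_X-P$ is also a band projection, and then $P_1+P_2-P_1P_2=I_X-(I_X-P_1)(I_X-P_2)$ is as well. The Boolean algebra axioms (associativity, commutativity, absorption, distributivity, and complementation) now reduce to routine algebraic identities; for instance, $P\wedge(I_X-P)=P(I_X-P)=0$ gives the complementation law, and distributivity $P_1(P_2+P_3-P_2P_3)=P_1P_2+P_1P_3-P_1P_2P_3=(P_1P_2)\vee(P_1P_3)$ is immediate.

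For the final assertion, the goal is to show $P_1P_2$ is the infimum of $\{P_1,P_2\}$ as computed in $\Lr X$. Assuming the order completeness needed for \cref{eq:rk2} to apply, for any decomposition $u+v=x$ with $u,v\in X_+$, the inequalities $u\ge P_2u$ and $v\ge P_1v$ combined with positivity of $P_1,P_2$ give $P_1u+P_2v\ge P_1P_2u+P_1P_2v=P_1P_2x$, so $(P_1\wedge P_2)x\ge P_1P_2x$. The reverse inequality comes from the specific choice $u=P_2x$, $v=(I_X-P_2)x$, which makes $P_1u+P_2v=P_1P_2x$. The formula for $P_1\vee P_2$ then follows by De Morgan. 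The main obstacle is the commutativity step, which hinges on the band-preservation property; once that is in place the remainder is algebraic bookkeeping and a single Riesz--Kantorovich computation.
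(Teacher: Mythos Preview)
The paper does not actually prove \cref{thm:regop_boole}; it is listed among the classical facts in \cref{sec:regop} that the reader is referred to the standard references for. So there is no ``paper's own proof'' to compare against. That said, your argument is the standard one and is correct: the commutativity step via band-preservation is exactly the right idea, and the algebraic verification of the Boolean axioms is routine once commutativity is in hand.

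One small refinement for part~(iii). You invoke the Riesz--Kantorovich formula \cref{eq:rk2}, which forces an order-completeness hypothesis that the theorem does not carry. You can avoid this entirely: for $x\in X_+$ one has $P_1P_2x\le P_1x$ and $P_1P_2x=P_2P_1x\le P_2x$, so $P_1P_2x\le (P_1x)\wedge(P_2x)$; conversely $0\le (P_1x)\wedge(P_2x)\le P_ix$ puts $(P_1x)\wedge(P_2x)$ in the ideal $\range P_i$ for $i=1,2$, hence $(P_1x)\wedge(P_2x)=P_1P_2\bigl((P_1x)\wedge(P_2x)\bigr)\le P_1P_2x$. Thus $(P_1x)\wedge(P_2x)=P_1P_2x$ pointwise, and if $T\le P_1$, $T\le P_2$ in $\Lr X$ then $Tx\le(P_1x)\wedge(P_2x)=P_1P_2x$ for all $x\ge0$, giving $T\le P_1P_2$. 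This shows $P_1P_2$ is the infimum in the partial order of $\Lr X$ without assuming $\Lr X$ is a lattice.
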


A direct consequence of previous theorem is that band projections
commute. Note here the interplay between the lattice and algebraic
operations: we are able to express the lattice operations in purely
algebraic terms, and from this we are able to deduce an algebraic
property.

It will be central to our discussion that band projections are
characterized among positive operators by the multiplication
operators they define. Let $A,B \in \Lr X$ be regular operators on an
order complete Banach lattice $X$. The operators
\[
\begin{array}{cccc}
L_A\colon& \Lr X & \longrightarrow & \Lr X \\
        & T & \longmapsto & AT \\
\end{array}\quad
\begin{array}{cccc}
R_B\colon& \Lr X & \longrightarrow & \Lr X \\
        & T & \longmapsto & TB \\
\end{array}
\]
are called \emph{left} and \emph{right multiplication operators},
respectively. More generally, any operator of the form $L_AR_B$, for
some $A,B \in \Lr X$, is
called a \emph{multiplication operator}. As promised, using multiplication
operators we can characterize those elements in $\Lpos X$ that are band
projections as follows.

\begin{thm}[{\cite[Theorem 4.5]{munoz-lahoz_tradacete2024}}]\label{thm:bp_multop}
    Let $X$ be an order complete Banach lattice, and let $A,B \in \Lr
    X$ be nonzero regular operators. The multiplication operator
    $L_AR_B$ is a band projection if and only if there exists $\lambda
    \in \R\setminus\{0\}$ such that $\lambda A$ and $B/\lambda $ are
    band projections.
\end{thm}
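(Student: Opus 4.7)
The plan is to handle the two implications separately. For the \emph{if} direction, set $P:=\lambda A$ and $Q:=B/\lambda$, so $L_AR_B=L_PR_Q$. Then $P^2=P$ and $Q^2=Q$ give idempotency of $L_PR_Q$; $P,Q\ge 0$ give positivity; and $P,Q\le I_X$ yield $PTQ\le TQ\le T$ for $T\in\Lpos X$, so $L_PR_Q\le I_{\Lr X}$. \cref{thm:regop_bpchar} applied inside $\Lr{\Lr X}$ then identifies $L_PR_Q$ as a band projection.

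For the \emph{only if} direction I would proceed in three steps. First, a sign reduction: applying $L_AR_B\ge 0$ to a rank-one $T=x\otimes\phi$ with $x\in X_+$, $\phi\in X^*_+$ forces $Ax\otimes B^*\phi\ge 0$, and since a nonzero positive rank-one operator has both factors of the same sign, a short case analysis using $A,B\ne 0$ yields $A,B\ge 0$ or $A,B\le 0$. Because $L_{-A}R_{-B}=L_AR_B$, I may assume $A,B\ge 0$; the two possible signs of $\lambda$ in the conclusion correspond to the two cases.

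Second, extracting the scalar: the identity $(L_AR_B)^2=L_AR_B$ reads $A^2TB^2=ATB$ for every $T\in\Lr X$, and testing on $T=x\otimes\phi$ yields $A^2x\otimes(B^*)^2\phi=Ax\otimes B^*\phi$. By the rigidity of nonzero rank-one decompositions, for each $(x,\phi)$ with $Ax\ne 0$ and $B^*\phi\ne 0$ there is $\alpha(x,\phi)\ne 0$ with $A^2x=\alpha\,Ax$ and $(B^*)^2\phi=B^*\phi/\alpha$; varying $x$ and $\phi$ independently shows $\alpha$ is a single global constant $\lambda$, positive since $A,A^2\ge 0$ and $A\ne 0$. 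Hence $A^2=\lambda A$ and $B^2=B/\lambda$. Third, bounding $A$ and $B$: choose $\psi\in X^*_+$ with $\phi_0:=B^*\psi\ne 0$ (which exists as $B\ne 0$), so that $B^*\phi_0=\phi_0/\lambda$; the inequality $L_AR_B(x\otimes\phi_0)\le x\otimes\phi_0$ then reads $(Ax/\lambda)\otimes\phi_0\le x\otimes\phi_0$, and evaluating at any $y\in X_+$ with $\phi_0(y)>0$ gives $Ax\le\lambda x$, i.e.\ $A\le\lambda I_X$. A symmetric argument yields $B\le I_X/\lambda$, so $A/\lambda$ and $\lambda B$ are positive idempotents bounded by $I_X$; \cref{thm:regop_bpchar} identifies them as band projections.

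I expect the main obstacle to lie in the second step: the careful argument that the family of scalars $\alpha(x,\phi)$ collapses to a single global constant, which requires handling the vectors on which $A$ or $B^*$ vanishes and using $A,B\ne 0$ to propagate the equality of scalars across both arguments.
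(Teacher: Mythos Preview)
The paper does not give its own proof of this theorem: it is quoted verbatim from \cite[Theorem~4.5]{munoz-lahoz_tradacete2024} and only the subsequent \cref{cor:bp_multop} is proved here. So there is no in-paper argument to compare yours against.

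Your proposal is correct. The \emph{if} direction is routine and exactly as you wrote it. For the \emph{only if} direction, the rank-one testing strategy $T=x\otimes\phi$ is the natural one, and each of your three steps goes through: the sign reduction via positivity of $Ax\otimes B^*\phi$; the extraction of a single global scalar $\lambda$ from $A^2x\otimes(B^*)^2\phi=Ax\otimes B^*\phi$ (the propagation argument you flag as the delicate point is indeed the one requiring care, but it works exactly as you sketch, since fixing one argument pins down $\alpha$ and then the other argument must agree); and the eigenvector trick $B^*\phi_0=\phi_0/\lambda$, $Ax_0=\lambda x_0$ to bound $A$ and $B$.

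One cosmetic point: at the end you conclude that $A/\lambda$ and $\lambda B$ are band projections, whereas the statement asks for a scalar $\mu$ with $\mu A$ and $B/\mu$ band projections. These match with $\mu=1/\lambda$, so nothing is wrong, but you may want to name the scalar accordingly to avoid confusion.
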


\begin{cor}\label{cor:bp_multop}
    Let $X$ be an order complete Banach lattice, and let $A \in \Lpos
    X$. The multiplication operator $L_AR_A$ is a band projection
    if and only if $A$ is a band projection.
\end{cor}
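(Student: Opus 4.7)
The corollary should fall out almost immediately from \cref{thm:bp_multop} by specializing to $B=A$, so the plan is simply to unpack what that theorem gives us in this symmetric case.

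First I would dispose of the trivial case $A=0$: then $L_AR_A$ is the zero operator, which is the trivial band projection, and $A=0$ is itself the zero band projection, so both directions hold vacuously. For the rest of the proof I assume $A\neq 0$, which puts us in the setting of \cref{thm:bp_multop}.

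The backward implication is immediate: if $A$ is a band projection, then taking $\lambda=1$ we have $\lambda A=A$ and $A/\lambda=A$ both band projections, so \cref{thm:bp_multop} (in its easy direction) gives that $L_AR_A$ is a band projection.

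For the forward direction, suppose $L_AR_A$ is a band projection. By \cref{thm:bp_multop} there exists $\lambda\in\R\setminus\{0\}$ such that $\lambda A$ and $A/\lambda$ are both band projections. Since band projections are positive, $\lambda A\ge 0$; combined with $A\ge 0$ and $A\ne 0$ this forces $\lambda>0$. The key step, which I expect to be the only real content, is to pin down $\lambda$: from $(\lambda A)^2=\lambda A$ we get $\lambda A^2=A$, and from $(A/\lambda)^2=A/\lambda$ we get $A^2=\lambda A$. Substituting the second into the first yields $\lambda^2 A=A$, and since $A\ne 0$ this gives $\lambda^2=1$, hence $\lambda=1$ by positivity. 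Therefore $A=\lambda A$ is a band projection, as required.
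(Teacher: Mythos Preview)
Your proof is correct and follows essentially the same approach as the paper's: both handle $A=0$ separately, invoke \cref{thm:bp_multop} to obtain $\lambda$, use positivity of $A$ to force $\lambda>0$, and then combine the two idempotence equations to get $\lambda^2 A=A$, hence $\lambda=1$. Your algebraic derivation of $\lambda^2 A=A$ is in fact slightly more direct than the paper's (which computes $(\lambda A/\lambda^2)^2$ two ways), but the argument is the same in spirit.
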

\begin{proof}
    Suppose that $L_AR_A$ is a band projection. If $A=0$ the result
    follows,
    so assume $A\neq 0$. By previous theorem, there exists
    $\lambda \in \R\setminus\{0\}$ such that $\lambda A$ and
    $A/\lambda $ are band projections. Since $A$ is positive, $\lambda
    $ must also be positive. Using that $\lambda A$ is a
    projection,
    \[
        \left( \frac{\lambda A}{\lambda ^2} \right) ^2=\frac{(\lambda
        A)^2}{\lambda ^{4}}=\frac{\lambda A}{\lambda ^{4}}.
    \]
    But $\lambda A/\lambda ^2=A/\lambda $ is also a projection, so
    $(\lambda A/\lambda ^2)^2=A/\lambda$. Join both equations to
    arrive at $\lambda ^2A=A$. Since $A\neq 0$, it
    must be the case that $\lambda ^2-1=0$. But $\lambda >0$ implies $\lambda =1$, and therefore $A$ is a band projection.
\end{proof}

\subsection{The center of a Banach lattice}\label{sec:center}

Central operators and the center of a Banach lattice have been
extensively studied in the literature (see \cite[Section
3.3]{abramovich_aliprantis2002}). Here we only recall some basic
definitions and a couple of results that will be useful later.

\begin{defn}
    Let $X$ be a Banach lattice. An operator $T\colon X\to X$ is
    called \emph{central} if there exists some scalar $\lambda >0$
    such that $|Tx|\le \lambda |x|$ for all $x \in X$. The collection
    of all central operators is denoted $\mathcal{Z}(X)$ and is called
    the \emph{center of $X$}.
\end{defn}

For example, central operators on $C(K)$ are precisely the operators
of the form $M_f(g)=fg$, where $f,g \in C(K)$.  Note also that band
projections are positive central operators.

Central operators have nice properties: they are regular and order
continuous, and positive central operators are also lattice
homomorphisms. The following theorem collects some of the most
important properties of the center.

\begin{thm}[Wickstead \cite{wickstead1977}]\label{thm:center}
    Let $X$ be a Banach lattice.
    \begin{enumerate}
        \item  The center $\mathcal{Z}(X)$, equipped with the operator
            norm, is an AM-space with unit $I_X$. In particular,
            \[
            \|T\|=\|\, |T|\,\|=\inf \{\, \lambda >0 : |T|\le \lambda
            I_X \, \}
            \]
            for every $T \in \mathcal{Z}(X)$, and $\mathcal{Z}(X)$ is
            lattice isometric to $C(K)$ for a certain compact
            Hausdorff space $K$.
        \item  The center $\mathcal{Z}(X)$ is an inverse closed\footnote{A
                subalgebra $B$ of an algebra $A$ is said to be
                \emph{inverse closed} if for every $b \in B$ invertible in $A$,
            the inverse $b^{-1}$ is contained in $B$.}
            subalgebra of $\Lr X$.
        \item When $X$ is order complete, the center $\mathcal{Z}(X)$ is
            the band generated by $I_X$ in $\Lr X$. In particular, it
            is a projection band.
    \end{enumerate}
\end{thm}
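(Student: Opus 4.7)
The plan is to treat the three claims (i), (ii), (iii) in turn, relying throughout on the defining inequality $|T| \le \lambda I_X$ of centrality and on the observation that positive central operators are lattice homomorphisms on $X$, which makes the multiplication and lattice operations on $\mathcal{Z}(X)$ interact particularly well. For (i), I would first check that $\mathcal{Z}(X)$ is a Riesz subspace of $\mathcal{L}(X)$: the modulus of a central operator exists (the relevant order-bounded suprema live inside $[0,\lambda x]$) and automatically satisfies $|T| \le \lambda I_X$, so $\mathcal{Z}(X)$ is a sublattice in which $I_X$ is an order unit. One direction of the norm identity $\|T\| = \inf\{\lambda : |T| \le \lambda I_X\}$ is immediate from the monotonicity of the lattice norm on $X$. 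For the reverse, if the operator norm $\mu = \|T\|$ were strictly smaller than the order-unit norm, the positive part $S := (|T| - \mu I_X)^+$ would be a nonzero positive central operator, from which I would extract an $x \ge 0$ with $Sx \ne 0$, giving $|T|x \ge \mu x + Sx$ and hence $\|Tx\| > \mu \|x\|$ by strict positivity of the lattice norm on nonzero positive elements, a contradiction. This makes $\mathcal{Z}(X)$ an AM-space with unit $I_X$, and Kakutani's representation theorem yields the lattice isometry with $C(K)$; the isomorphism is moreover multiplicative because positive central operators are lattice homomorphisms and these correspond to the pointwise product on $C(K)$.

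For (ii), I would exploit the $C(K)$ representation obtained in (i) to deduce inverse closedness via spectral permanence. Given $T \in \mathcal{Z}(X)$, its spectrum $\sigma_{\mathcal{Z}(X)}(T)$ is the range of its Gelfand transform in $C(K)$, a compact subset of $\R$, which therefore has empty interior in $\C$. The classical boundary inclusion $\partial \sigma_{\mathcal{Z}(X)}(T) \subseteq \sigma_{\Lr X}(T)$ valid for any closed unital subalgebra then forces $\sigma_{\mathcal{Z}(X)}(T) = \sigma_{\Lr X}(T)$, so that $T$ is invertible in $\mathcal{Z}(X)$ whenever it is invertible in $\Lr X$.

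For (iii), the inclusion $\mathcal{Z}(X) \subseteq \{I_X\}^{dd}$ is automatic because $\mathcal{Z}(X)$ is, by definition, the order ideal generated by $I_X$, and ideals sit inside the band they generate. For the converse, assuming $X$ and hence $\Lr X$ order complete, I would show any $T$ in the band $\{I_X\}^{dd}$ satisfies $|T| \le \lambda I_X$ for some $\lambda$. The idea is that $|T| \wedge n I_X \uparrow |T|$ holds in $\Lr X$ by the standard characterization of the principal projection band, while each truncation $|T| \wedge n I_X$ lies in $\mathcal{Z}(X)$; using that $T$ is a bounded operator and applying the Riesz--Kantorovich formula $|T|(x) = \sup\{|Ty| : |y| \le x\}$, one shows that the sequence of truncations stabilizes at some finite $n$, delivering the required pointwise bound. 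Once equality $\mathcal{Z}(X) = \{I_X\}^{dd}$ is established, order completeness of $\Lr X$ automatically makes $\mathcal{Z}(X)$ a projection band.

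The main obstacle lies precisely in this stabilization step of (iii): translating the order-theoretic supremum $\sup_n (|T| \wedge n I_X)$ in $\Lr X$ into a genuine pointwise dominance $|T| \le \lambda I_X$ is the heart of the matter, and it is here that order completeness and the specific structure of regular operators (through the Riesz--Kantorovich formulae) become indispensable. Parts (i) and (ii), by contrast, reduce fairly cleanly to standard facts about AM-spaces and to spectral permanence for closed unital commutative subalgebras, once the Kakutani representation is in hand.
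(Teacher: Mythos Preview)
The paper does not prove this theorem: it is quoted in the review section as a classical result of Wickstead, with only a citation and no argument. There is therefore no proof in the paper to compare against directly.

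That said, the paper does essentially reprove part~(iii) later, in the broader setting of Banach lattice algebras (the second item of \cref{thm:bla_ideal}), and the route taken there differs from yours. Rather than trying to show that the truncations $|T|\wedge nI_X$ stabilise from boundedness of $T$, the paper first shows that $\sup_n a\wedge ne$ exists, then argues that the corresponding left-multiplication operator is an \emph{orthomorphism} (as a norm limit of central operators, it preserves disjointness), and finally invokes the Abramovich--Veksler--Koldunov theorem (\cref{thm:central_orto_band}) to conclude centrality, hence membership in the ideal generated by the identity. Your stabilisation idea tries to bypass this and extract $|T|\le \lambda I_X$ directly from the Riesz--Kantorovich formula; as you yourself flag, this is the crux, and it is not clear how boundedness alone forces it without passing through the orthomorphism characterisation.

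Two smaller remarks. In your argument for~(i), the inequality $|T|x\ge \mu x+Sx$ does not follow as written: one only has $|T|=\mu I_X+S-R$ with $R=(|T|-\mu I_X)^-$, so you would need an $x$ with $Sx\ne 0$ and $Rx=0$, which in turn requires knowing that disjoint positive central operators have disjoint ranges on each $x$---true, but itself a fact about orthomorphisms needing justification. For~(ii), the spectral-permanence route is valid once one checks $\mathcal Z(X)$ is closed in $\Lr X$ (which follows from~(i)); a more elementary alternative, in the spirit of what the paper does in \cref{thm:bla_ideal}, is to note that a bijective band-preserving operator has band-preserving inverse and then apply \cref{thm:central_orto_band}.
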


An operator $T\colon X\to X$ on a Banach lattice $X$ is said to be
\begin{enumerate}
    \item an \emph{orthomorphism} if
$|x|\wedge |y|=0$ implies $|x|\wedge |Ty|=0$ for every $x,y \in X$;
    \item \emph{band preserving} if $T(B)\subseteq B$ for every
band $B$ in $X$.
\end{enumerate}

It is immediate that central operators are
orthomorphisms, and that orthomorphisms are band preserving. Not
trivial, but true, is that for Banach lattices these three classes of
operators coincide.

\begin{thm}[Abramovich--Veksler--Koldunov
    {\cite{abramovich_veksler_koldunov1979}}]\label{thm:central_orto_band}
    For an operator $T\colon X\to X$ on a Banach lattice $X$ the
    following are equivalent.
    \begin{enumerate}
        \item $T$ is a central operator.
        \item $T$ is an orthomorphism.
        \item $T$ is band preserving.
    \end{enumerate}
\end{thm}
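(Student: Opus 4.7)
The plan is to establish the chain (1) $\Rightarrow$ (2) $\Rightarrow$ (3) $\Rightarrow$ (1); the last implication is where the Banach lattice structure is essential and constitutes the main obstacle.

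For (1) $\Rightarrow$ (2), suppose $|Tx|\le \lambda|x|$ for some $\lambda>0$ and all $x\in X$. Given $|x|\wedge|y|=0$, iterating the standard subadditivity $a\wedge(b+c)\le(a\wedge b)+(a\wedge c)$ for positive vectors yields $|x|\wedge n|y|\le n(|x|\wedge|y|)=0$ for every $n\in\N$. Choosing $n\ge\lambda$ gives
\[
0\le |x|\wedge|Ty|\le |x|\wedge\lambda|y|\le |x|\wedge n|y|=0,
\]
so $T$ is an orthomorphism.

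For (2) $\Rightarrow$ (3), fix a band $B\subseteq X$ and $x\in B$. For any $y\in B^{d}$ the disjointness $|y|\wedge|x|=0$ together with the orthomorphism hypothesis applied to the pair $(y,x)$ gives $|y|\wedge|Tx|=0$. Hence $Tx$ is disjoint from every element of $B^{d}$, i.e.\ $Tx\in B^{dd}$; since bands in Archimedean vector lattices satisfy $B=B^{dd}$, we conclude $Tx\in B$.

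The deep content of the theorem is the implication (3) $\Rightarrow$ (1), which genuinely uses norm completeness and fails in general Archimedean vector lattices. I would proceed in two stages. The first and crucial one is automatic continuity: every band-preserving linear operator on a Banach lattice is norm bounded. This is typically established via the closed graph theorem, exploiting the band-preserving hypothesis to show that if $x_n\to 0$ and $Tx_n\to z$, then for each $\varepsilon>0$ the element $(|z|-\varepsilon)^{+}$ must be disjoint from itself, forcing $z=0$. The second stage upgrades norm boundedness to the uniform pointwise estimate $|Tx|\le\lambda|x|$: if no such $\lambda$ existed, a truncation and rescaling on a sequence of pairwise disjoint bands (the band-preserving property ensures the images stay separated) would produce a normalized vector in $X$ whose image under $T$ has arbitrarily large norm, contradicting continuity. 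The automatic continuity step is the main obstacle; once it is in hand, the final disjointification argument is standard and the Banach lattice norm's lattice-compatibility does the rest.
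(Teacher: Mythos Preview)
The paper does not supply a proof of this theorem: it is quoted as a classical result of Abramovich, Veksler and Koldunov, with a citation to the original article, and is used only as background in \cref{sec:center}. There is therefore no paper proof to compare against.

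As for your sketch, the implications $(1)\Rightarrow(2)\Rightarrow(3)$ are correctly argued; the Birkhoff-type inequality $a\wedge(b+c)\le(a\wedge b)+(a\wedge c)$ for positive elements is standard and does the job for $(1)\Rightarrow(2)$, and the use of $B=B^{dd}$ for $(2)\Rightarrow(3)$ is fine in any Archimedean vector lattice. For $(3)\Rightarrow(1)$ you have the right architecture---automatic continuity via the closed graph theorem, followed by a disjointification argument to obtain the uniform bound $|Tx|\le\lambda|x|$---and you correctly flag this as the substantive step. What you have written is only an outline, however: the closed-graph verification (showing that $x_n\to 0$, $Tx_n\to z$ forces $z=0$ from band preservation alone) and the construction of the contradicting disjoint sequence both require real work that you have not spelled out. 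Since the paper merely cites the result, your outline is adequate for the purpose, but be aware that a full proof of $(3)\Rightarrow(1)$ is genuinely nontrivial and would need the details filled in.
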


\subsection{Complexification and spectra}\label{sec:regop_spectra}

We want to introduce the ``complex version'' of the spaces of regular
operators. But first we need to recall the notion of a complex
Banach lattice, following \cite[Section 2.2]{meyer-nieberg1991}. Let $X$ be a Banach lattice. The
\emph{complexification} of $X$, as a vector space, is the additive
group $X\times X$ with scalar multiplication
\[
    (\alpha +i\beta )(x,y)=(\alpha x-\beta y,\alpha y+\beta x),
\]
where $\alpha ,\beta \in \R$ and $x, y \in X$. The group $X\times X$ with this scalar multiplication is a complex vector
space, usually denoted by $X_\C$. Identifying $x \in X$ with $(x,0)
\in X\times X$ and $ix$ with $(0,x)$, we write $x+iy$ instead of
$(x,y)\in X\times X$.

\begin{prop}
    Let $X$ be a Banach lattice. For $z=x+iy \in X_\C$ define
    \[
    |z|=\sup \{\, x\cos\phi +y \sin\phi  : 0\le \phi \le 2\pi  \, \}.
    \]
    This supremum exists in $X$, and it satisfies:
    \begin{enumerate}
        \item $|z|=0$ if and only if $z=0$,
        \item $|\lambda z|=|\lambda| |z|$ for all $\lambda \in \C$ and
            $z \in X_\C$,
        \item $|z+w|\le |z|+|w|$ for all $z,w \in X_\C$.
    \end{enumerate}

    Moreover, the map $z\mapsto \| |z| \|$ defines a norm on $X_\C$,
    and $X_\C$ with this norm is a complex Banach space.
\end{prop}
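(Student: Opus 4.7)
My plan is to establish the existence of $|z|$ as a supremum in $X$ first, then verify the three lattice identities, and finally deduce that $z\mapsto \|\,|z|\,\|$ is a complete norm.

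The key obstacle is the existence of the supremum, since $X$ is not assumed order complete. To handle it, I would work inside the principal ideal $I$ generated by $u=|x|+|y|$: this is an AM-space with order unit $u$, so by Kakutani's representation theorem it is lattice isomorphic to $C(K)$ for some compact Hausdorff space $K$. Under this isomorphism, $x$ and $y$ correspond to continuous functions $f,g\in C(K)$, and the element $x\cos\phi + y\sin\phi$ corresponds to $f\cos\phi + g\sin\phi$. The pointwise supremum over $\phi\in[0,2\pi]$ is the continuous function $\sqrt{f^{2}+g^{2}}$, which is clearly the supremum in $C(K)$. Pulling back through the isomorphism gives the supremum $|z|$ inside $I$, and a short argument using that $X$ is a lattice (so that binary infima exist) shows that this is also the supremum in the ambient space $X$.

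With $|z|$ in hand, properties (i)--(iii) are routine. For (i), if $|z|=0$, then $x\cos\phi + y\sin\phi\le 0$ for all $\phi$, and choosing $\phi\in\{0,\pi/2,\pi,3\pi/2\}$ forces $x=y=0$; the converse is immediate. For (ii), writing $\lambda=re^{i\theta}$ and expanding $\lambda z = x'+iy'$, one checks that $x'\cos\phi + y'\sin\phi = r\bigl(x\cos(\phi-\theta)+y\sin(\phi-\theta)\bigr)$, so reparametrizing the supremum yields $|\lambda z|=r|z|=|\lambda|\,|z|$. For (iii), the elementary bound $\sup_{\phi}(a_\phi+b_\phi)\le \sup_{\phi}a_\phi + \sup_{\phi}b_\phi$ applied pointwise in $\phi$ gives the triangle inequality.

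For the norm, definiteness and positive homogeneity follow directly from (i) and (ii), and the triangle inequality from (iii) combined with the fact that $\|\cdot\|$ is a lattice norm on $X$. Completeness can be reduced to that of $X$ by establishing that $z\mapsto \|\,|z|\,\|$ is equivalent to $(x,y)\mapsto \|x\|+\|y\|$: evaluating the defining supremum at $\phi\in\{0,\pi\}$ and at $\phi\in\{\pi/2,3\pi/2\}$ shows $|x|,|y|\le|z|$, hence $\|x\|,\|y\|\le\|\,|z|\,\|$; and the pointwise estimate $x\cos\phi+y\sin\phi\le|x|+|y|$ gives $|z|\le |x|+|y|$, hence $\|\,|z|\,\|\le\|x\|+\|y\|$. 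Since $X\times X$ is complete in the $\ell_{1}$-sum norm, so is $X_{\C}$.
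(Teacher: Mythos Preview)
The paper does not give its own proof of this proposition; it is quoted as background material with a reference to Meyer-Nieberg's book. Your argument is correct and is in fact the standard one found there: reduce the existence of the supremum to a pointwise computation in $C(K)$ by representing the principal ideal generated by $|x|+|y|$ via Kakutani's theorem, then transfer back to $X$. One point worth making explicit is that $(I_u,\|\cdot\|_u)$ is complete, hence a genuine AM-space with unit to which Kakutani applies; this follows from completeness of $X$ together with closedness of the positive cone, but deserves a sentence since you invoke it without comment. The remaining verifications---properties (i)--(iii), the norm axioms, and completeness via equivalence with the $\ell_1$-sum norm on $X\times X$---are all correct as you outline them.
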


\begin{defn}
    A \emph{complex Banach lattice} is a Banach space of the form
    $X_\C$, for a certain Banach lattice $X$, with the norm and
    modulus introduced in previous proposition.
\end{defn}

We now introduce regular operators between complex Banach lattices $X_\C$
and $Y_\C$. For every (complex) linear operator $T\colon X_\C\to Y_\C$
there exist unique (real) linear operators $T_1,T_2\colon X\to Y$ such
that $Tx=T_1x+iT_2x$ for all $x \in X$. The operator $T_1$ is called the \emph{real
part} of $T$, while the operator $T_2$ is called the \emph{imaginary part} of $T$.
Thus $\mathcal{L}(X_\C,Y_\C)$ is
isomorphic to the complexification of $\mathcal{L}(X,Y)$.
An operator $T\colon X_\C\to Y_\C$ is said to be
\emph{real valued} if $T(X)\subseteq Y$. If $T$ is real valued and
$T(X_+)\subseteq Y_+$, then $T$ is said to be \emph{positive}. An
operator $T\colon X_\C\to Y_\C$ is said to be \emph{regular} if both
its real and imaginary parts are regular. When $Y$ is order complete, the space of regular
operators between $X_\C$ and $Y_\C$ may be identified with the
complexification of the Banach lattice $\Lr{X,Y}$.

Given a regular operator $T\colon X_\C\to X_\C$, denote by $\sigma
(T)$ its spectrum. The following result
characterizes the lattice isomorphisms contained in the center in
terms of their spectrum, and will be needed later in the discussion.

\begin{thm}[Schaefer--Wolff--Arendt
    \cite{schaefer_wolff_arendt1978}]\label{thm:swa}
    Let $T$ be a lattice isomorphism of a complex Banach lattice $X$.
    Then $T \in \mathcal{Z}(X)$ if and only if $\sigma (T)\subseteq
    \R_+$.
\end{thm}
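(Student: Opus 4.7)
For the forward implication I would reduce to the Gelfand representation given by \cref{thm:center}. Complexifying the lattice and algebra isometry $\mathcal{Z}(X) \cong C(K)$, the operator $T$ corresponds to some $f \in C_{\C}(K)$. Because $T$ is a lattice isomorphism it is positive with positive inverse, and \cref{thm:center}(ii) ensures that this inverse again lies in $\mathcal{Z}(X)$, corresponding to $1/f$. Positivity of both $f$ and $1/f$ forces $f$ to be strictly positive on $K$, so $\sigma(T) = f(K) \subseteq (0, \infty) \subseteq \R_+$; inverse closedness also guarantees that the spectrum computed in $\mathcal{Z}(X)_{\C}$ agrees with that in $\mathcal{L}(X_\C)$.

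For the converse the plan is to show that $T$ is band preserving and conclude via \cref{thm:central_orto_band} that $T \in \mathcal{Z}(X)$. The decisive input is the \emph{cyclicity} of the peripheral spectrum of a positive lattice homomorphism on a complex Banach lattice: the quotient $\sigma_{\mathrm{per}}(T)/r(T)$ is a closed subgroup of the unit circle $\{|z|=1\}$. Under the assumption $\sigma(T) \subseteq \R_+$, this subgroup must be $\{1\}$, so $\sigma_{\mathrm{per}}(T) = \{r(T)\}$, and the same argument applied to the lattice isomorphism $T^{-1}$ (whose spectrum $\sigma(T)^{-1}$ also lies in $\R_+$) confines $\sigma(T)$ to a compact subinterval of $(0, \infty)$.

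With the spectrum bounded away from both $0$ and $\infty$, I would use the holomorphic functional calculus to define $\log T$ and the continuous one-parameter family $T^{t} = \exp(t \log T)$ for $t \in \R$, each of which has spectrum in $(0, \infty)$. A functional-calculus argument combined with the cyclicity criterion applied to $T^t$ for every $t$ should yield that each $T^t$ is again a positive lattice isomorphism. Since $T^0 = I$ preserves every band and $t \mapsto T^t$ is continuous in operator norm, a continuity argument on the Boolean algebra of band projections of $X$ would then allow one to conclude that every band of $X$ is invariant under $T^t$ for all $t$, and in particular under $T = T^1$.

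The main obstacle is precisely this last step, together with verifying that $T^t$ is genuinely a lattice homomorphism for non-integer $t$. Continuity of a family of positive operators does not automatically transfer band invariance from $T^0$ to $T^1$; the argument would need the special rigidity of positive lattice homomorphisms together with the fact that the collection of band projections carries a totally disconnected Boolean structure. I would expect this technical core to draw on the structure theory developed by Schaefer and Arendt for positive operators on Banach lattices, rather than on results available in the excerpt.
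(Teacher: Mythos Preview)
The paper does not prove \cref{thm:swa}; it is quoted from \cite{schaefer_wolff_arendt1978} as an external result and used as a black box (in the proof of the corollary in \cref{sec:bla_spectra}). So there is no in-paper proof to compare your proposal against.

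That said, your outline is broadly in the spirit of the original Schaefer--Wolff--Arendt argument: the forward direction via the $C(K)$ representation of the centre is correct and routine, and for the converse the idea of forming the one-parameter group $T^{t}=\exp(t\log T)$ and deducing that each $T^{t}$ is a lattice isomorphism is precisely the mechanism used in \cite{schaefer_wolff_arendt1978}. You are also right to flag the two genuine difficulties. First, concluding that $T^{t}$ is a lattice homomorphism for non-integer $t$ is the heart of the matter; cyclicity of the peripheral spectrum alone does not give this, and the original proof uses a more delicate argument (essentially a spectral characterisation of lattice homomorphisms among positive operators). Second, your proposed ``continuity on the Boolean algebra of band projections'' step is not how the argument concludes: once every $T^{t}$ is a lattice isomorphism, one differentiates at $t=0$ to see that $\log T$ is an orthomorphism (it preserves disjointness because each $T^{t}$ does), and then $T=\exp(\log T)$ lies in the closed subalgebra generated by the centre, hence in $\mathcal{Z}(X)$. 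The continuity-of-band-invariance idea you sketch would not work as stated, since band invariance is not an open condition in the operator norm.
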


\subsection{Inner band projections}\label{sec:regop_innerbp}

For an order complete Banach lattice $X$, we have introduced the
Banach lattice $\Lr X$ of regular operators over $X$. We can then consider
band projections on this space. Since $\Lr X$ is again order complete,
any band will have an associated band projection. We have already
encountered an example of such a band in $\Lr X$, namely the center.
Another example are inner band projections, a particular
class of band projections on spaces of regular operators introduced by
the author together with P.\ Tradacete in
\cite{munoz-lahoz_tradacete2024}. The construction of these band
projections is summarized in the following result.

\begin{thm}[{\cite[Theorem 2.1]{munoz-lahoz_tradacete2024}}]\label{thm:proj_general}
    Let $X$ be an order complete Banach lattice and let $\{P_\lambda
    \}_{\lambda \in \Lambda }$ be a family of pairwise disjoint band
    projections on $X$. Let $\Gamma \subseteq \Lambda \times \Lambda $
    be any subset. Then the map
    \[\
        \begin{array}{cccc}
            \mathcal{P} _\Gamma \colon& \Lpos X & \longrightarrow & \Lpos X \\
            & T & \longmapsto & \displaystyle\bigvee_{(\alpha ,\beta)\in \Gamma } P_\alpha TP_\beta \\
        \end{array}
    \]
    extends to a unique band projection $\mathcal{P}_\Gamma \colon \Lr
    X\to \Lr X$.
\end{thm}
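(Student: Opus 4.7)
The plan is to build $\mathcal{P}_\Gamma$ in three stages: show the displayed supremum defines a positive operator on $\Lpos X$ bounded above by $T$; extend it by linearity to all of $\Lr X$; and verify idempotency so that \cref{thm:regop_bpchar}, applied to the (order complete) Banach lattice $\Lr X$, identifies the extension as a band projection. Uniqueness is then automatic, since any positive linear extension is determined by its values on the generating cone $\Lpos X$.

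First I would verify that the net of finite partial sums is bounded above by $T$. Fix $T \in \Lpos X$ and for finite $F \subseteq \Gamma$ set $S_F := \sum_{(\alpha,\beta)\in F} P_\alpha T P_\beta$; these form an increasing net of positive operators. Grouping the sum by the first index and using that $\sum_\beta P_\beta x \le x$ for any family of pairwise disjoint band projections applied to $x \in X_+$, one obtains
\[
S_F x \;\le\; \sum_\alpha P_\alpha(Tx) \;\le\; Tx,
\]
so $S_F \le T$. Since $\Lr X$ is order complete, the supremum $\mathcal{P}_\Gamma(T) := \bigvee_F S_F$ exists and satisfies $0 \le \mathcal{P}_\Gamma(T) \le T$. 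Because suprema of increasing nets of positive operators commute with finite sums and positive scalar multiplications, $\mathcal{P}_\Gamma$ is additive and positively homogeneous on $\Lpos X$, hence extends uniquely to a positive linear operator $\mathcal{P}_\Gamma\colon \Lr X \to \Lr X$ with $0 \le \mathcal{P}_\Gamma \le I_{\Lr X}$.

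For idempotency I would exploit that band projections are order continuous, so multiplication by $P_{\alpha_0}$ on the left and $P_{\beta_0}$ on the right commutes with suprema. For $(\alpha_0,\beta_0) \in \Gamma$ and $T \in \Lpos X$,
\[
P_{\alpha_0} \mathcal{P}_\Gamma(T) P_{\beta_0} \;=\; \bigvee_F \sum_{(\alpha,\beta) \in F} P_{\alpha_0} P_\alpha T P_\beta P_{\beta_0} \;=\; P_{\alpha_0} T P_{\beta_0},
\]
since $P_{\alpha_0}P_\alpha = \delta_{\alpha_0 \alpha}P_{\alpha_0}$ and $P_\beta P_{\beta_0} = \delta_{\beta \beta_0}P_{\beta_0}$ by pairwise disjointness, leaving only the $(\alpha_0,\beta_0)$-term. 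Inserting this,
\[
\mathcal{P}_\Gamma(\mathcal{P}_\Gamma(T)) \;=\; \bigvee_F \sum_{(\alpha,\beta)\in F} P_\alpha \mathcal{P}_\Gamma(T) P_\beta \;=\; \bigvee_F \sum_{(\alpha,\beta)\in F} P_\alpha T P_\beta \;=\; \mathcal{P}_\Gamma(T),
\]
so $\mathcal{P}_\Gamma$ is idempotent. Combined with $0 \le \mathcal{P}_\Gamma \le I_{\Lr X}$, \cref{thm:regop_bpchar} yields that it is a band projection.

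The main obstacle is precisely the interchange of multiplication by a band projection with an order supremum used in the idempotency calculation; this is where order continuity of band projections enters in an essential way. Tellingly, this is also the step that cannot be imitated in an abstract Banach lattice algebra, since without an underlying space on which the projections act one has no direct way of pulling the multiplication inside an order supremum. It is therefore the obstacle that motivates the later paper's strategy of transferring questions about elements $P_\alpha \in A$ to questions about the multiplication operators $L_{P_\alpha}R_{P_\beta} \in \Lr A$, where order continuity is again available.
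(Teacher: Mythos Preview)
Your argument is correct. Note, however, that the paper does not supply its own proof of this theorem: it is quoted from \cite{munoz-lahoz_tradacete2024}, and the introduction tells us that the original argument there ``relied heavily on the expression of the Riesz--Kantorovich formulae''. What you have written is not that proof; it is essentially the proof the present paper gives for the abstract generalization \cref{thm:innerbp}. The only real difference is in the idempotency step: the paper argues $P_\Gamma^2\ge P_\Gamma$ by the elementary chain
\[
P_\Gamma(P_\Gamma(x))\ge \sum_{(\alpha,\beta)\in\Phi} p_\alpha P_\Gamma(x)p_\beta \ge \sum_{(\alpha,\beta)\in\Phi} p_\alpha\Big(\sum_{(\gamma,\delta)\in\Phi} p_\gamma x p_\delta\Big)p_\beta=\sum_{(\alpha,\beta)\in\Phi} p_\alpha x p_\beta,
\]
using only positivity of multiplication, whereas you interchange $L_{P_{\alpha_0}}R_{P_{\beta_0}}$ with a supremum via order continuity.

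Your closing diagnosis is therefore slightly off. The interchange you flag as ``the step that cannot be imitated in an abstract Banach lattice algebra'' \emph{is} available there: once $p_\alpha\in BP_l(A)\cap BP_r(A)$, the operator $L_{p_\alpha}R_{p_\beta}$ is a band projection on $A$, hence order continuous, so your computation goes through verbatim. That is exactly the point of passing to multiplication operators---it recovers order continuity, rather than being a workaround for its absence. The paper's inequality-based version simply avoids having to articulate this.
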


\begin{defn}
    Let $X$ be an order complete Banach lattice.  We say that a band
    projection $\mathcal{P}$ on $\Lr X$ is an \emph{inner band projection}
    if there exists a family of pairwise disjoint band projections
    $\{P_{\lambda }\}_{\lambda \in \Lambda }$ on $X$ such that
    $\mathcal{P}=\mathcal{P}_\Gamma $, as defined in
    \cref{thm:proj_general}, for a certain $\Gamma \subseteq \Lambda
    \times \Lambda $.  We will call \emph{inner projection bands} the
    bands associated with inner band projections, and we will denote
    $\mathcal{B}_\Gamma =\range(\mathcal{P}_\Gamma)$.
\end{defn}

In \cref{sec:bla_innerbp} we will be able to generalize this
construction to arbitrary order complete Banach lattice algebras.
Together with the construction, we will generalize some of the
properties of inner band projections.
In particular, we will study the following property, regarding the
structure of the set of inner band projections.

\begin{prop}[{\cite[Proposition
    2.10]{munoz-lahoz_tradacete2024}}]\label{prop:innerbp_boole}
    Let $X$ be an order complete Banach lattice and let $\{P_\lambda
    \}_{\lambda \in \Lambda }$ be a family of pairwise disjoint
    band projections on $X$. The inner band projections $\{
        \mathcal{P}_\Gamma : \Gamma  \subseteq \Lambda \times
    \Lambda  \} $ (resp.\ the inner projection bands $\{
    \mathcal{B}_\Gamma : \Gamma \subseteq \Lambda \times
    \Lambda  \} $) form a Boolean algebra with the same order,
    suprema and infima as in $\Lr {\Lr X}$. Moreover, this Boolean algebra is
    isomorphic to $2^{\Lambda \times \Lambda }$ through the map
    $\Gamma \mapsto \mathcal{P}_\Gamma $ (resp.\ $\Gamma \mapsto
    \mathcal{B}_\Gamma $).
\end{prop}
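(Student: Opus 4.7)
The plan is to show that $\Gamma \mapsto \mathcal{P}_\Gamma$ is a Boolean algebra embedding of $2^{\Lambda \times \Lambda}$ into the Boolean algebra of band projections on $\Lr{\Lr X}$, with image the family of inner band projections. Without loss of generality I assume that each $P_\lambda$ is nonzero.

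For injectivity, for each pair $(\alpha,\beta) \in \Lambda \times \Lambda$ I would construct a nonzero positive rank-one operator $T_{\alpha,\beta} \in \Lpos X$ (for instance, of the form $(P_\alpha y) \otimes (\phi \circ P_\beta)$ with $y \in X_+$ and $\phi \in X_+^*$ chosen so that $P_\alpha y \neq 0$ and $\phi \circ P_\beta \neq 0$). The pairwise disjointness of the $P_\lambda$ then yields $P_{\alpha'} T_{\alpha,\beta} P_{\beta'} = \delta_{\alpha\alpha'}\delta_{\beta\beta'} T_{\alpha,\beta}$, so $\mathcal{P}_\Gamma(T_{\alpha,\beta})$ equals $T_{\alpha,\beta}$ when $(\alpha,\beta) \in \Gamma$ and $0$ otherwise; hence $\Gamma_1 \neq \Gamma_2$ forces $\mathcal{P}_{\Gamma_1} \neq \mathcal{P}_{\Gamma_2}$.

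The heart of the argument is to verify the two identities
\[
\mathcal{P}_{\Gamma_1 \cup \Gamma_2} = \mathcal{P}_{\Gamma_1} \vee \mathcal{P}_{\Gamma_2}, \qquad \mathcal{P}_{\Gamma_1 \cap \Gamma_2} = \mathcal{P}_{\Gamma_1} \wedge \mathcal{P}_{\Gamma_2},
\]
with the lattice operations on the right taken in $\Lr{\Lr X}$. By \cref{thm:proj_general} each side of each identity is a band projection on $\Lr X$, so it suffices to check agreement on $\Lpos X$. The union identity reduces to the pointwise formula $(\mathcal{P}_{\Gamma_1} \vee \mathcal{P}_{\Gamma_2})(T) = \mathcal{P}_{\Gamma_1}(T) \vee \mathcal{P}_{\Gamma_2}(T)$ for $T \ge 0$, a general fact for pairs of band projections on $\Lr X$ that follows from \cref{thm:regop_boole} together with the disjointness of $\mathcal{P}_{\Gamma_1}(T)$ and $(I - \mathcal{P}_{\Gamma_1})\mathcal{P}_{\Gamma_2}(T)$. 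For the intersection, \cref{thm:regop_boole} gives $\mathcal{P}_{\Gamma_1} \wedge \mathcal{P}_{\Gamma_2} = \mathcal{P}_{\Gamma_1}\mathcal{P}_{\Gamma_2}$; evaluating $\mathcal{P}_{\Gamma_1}(\mathcal{P}_{\Gamma_2}(T))$ calls for interchanging $L_{P_{\alpha'}}$ and $R_{P_{\beta'}}$ with the supremum defining $\mathcal{P}_{\Gamma_2}(T)$. This is legitimate because $L_{P_{\alpha'}}$ and $R_{P_{\beta'}}$ are themselves band projections on $\Lr X$ (by \cref{thm:regop_bpchar}, since they are positive idempotents bounded above by the identity) and hence order continuous. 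The relation $P_{\alpha'} P_\alpha = \delta_{\alpha\alpha'} P_\alpha$ then collapses the resulting double supremum onto its diagonal, producing exactly $\mathcal{P}_{\Gamma_1 \cap \Gamma_2}(T)$.

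Combining injectivity with these identities, the image is a sublattice of the band projections on $\Lr{\Lr X}$ with bottom $\mathcal{P}_\emptyset = 0$, top $\mathcal{P}_{\Lambda \times \Lambda}$, and the complement of $\mathcal{P}_\Gamma$ realized by $\mathcal{P}_{(\Lambda \times \Lambda) \setminus \Gamma}$; it is therefore a Boolean algebra and $\Gamma \mapsto \mathcal{P}_\Gamma$ is a Boolean isomorphism. The analogous statement for the inner projection bands $\mathcal{B}_\Gamma = \range \mathcal{P}_\Gamma$ follows at once from the standard order isomorphism between band projections and projection bands on the order complete lattice $\Lr X$. The principal obstacle I anticipate is the intersection identity, specifically making rigorous the interchange of $L_{P_{\alpha'}}$ and $R_{P_{\beta'}}$ with the supremum; all remaining steps are routine applications of \cref{thm:regop_boole,thm:proj_general}.
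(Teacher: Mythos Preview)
This proposition is quoted from \cite{munoz-lahoz_tradacete2024} and the present paper gives no proof of it. The paper does, however, prove a Banach-lattice-algebra analogue in \cref{sec:bla_innerbp} (the proposition immediately following \cref{thm:innerbp}), and your argument for the lattice identities mirrors that proof closely: the intersection formula is obtained exactly as you describe, by writing $\mathcal P_{\Gamma_1}\wedge\mathcal P_{\Gamma_2}=\mathcal P_{\Gamma_1}\mathcal P_{\Gamma_2}$ and using order continuity of the band projections $L_{P_{\alpha'}}R_{P_{\beta'}}$ to pass them through the defining supremum, after which $P_{\alpha'}P_\alpha=\delta_{\alpha\alpha'}P_\alpha$ collapses the double supremum. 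For the union the paper proceeds slightly differently---it first treats disjoint $\Gamma_1,\Gamma_2$ by showing $P_{\Gamma_1}+P_{\Gamma_2}=P_{\Gamma_1\cup\Gamma_2}$ and then reduces the general case to three disjoint pieces---whereas you go directly through the pointwise identity $(\mathcal P\vee\mathcal Q)(T)=\mathcal P(T)\vee\mathcal Q(T)$ for band projections acting on positive $T$. Both routes are correct and of comparable length.

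Your injectivity argument via rank-one operators $T_{\alpha,\beta}$ is the one genuine ingredient not present in the paper's \cref{sec:bla_innerbp} proof---necessarily so, since in that generality the map $\Gamma\mapsto P_\Gamma$ can fail to be injective (see the remark following that proposition). Your construction is correct and is exactly what is needed to upgrade the Boolean-algebra structure on the image to the Boolean isomorphism with $2^{\Lambda\times\Lambda}$ claimed in the statement.
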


Inner band projections are also used to characterize
atomic and order continuous Banach lattices.

\begin{thm}[{\cite[Theorem
    3.1]{munoz-lahoz_tradacete2024}}]\label{thm:innerbp_all}
    Let $X$ be an order complete Banach lattice. All band projections
    on $\Lr X$ are inner if and only if $X$ is both atomic and order
    continuous.
\end{thm}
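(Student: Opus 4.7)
The plan is to prove the biconditional by handling the two implications separately. Sufficiency will be a coordinate-by-coordinate argument using atomic decomposition, while necessity will be proved by contrapositive, exhibiting explicit band projections that cannot be inner when atomicity or order continuity fails.

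For sufficiency, I would fix a maximal pairwise disjoint family $\{e_\lambda\}_{\lambda \in \Lambda}$ of normalized atoms in $X$, with associated rank-one band projections $P_\lambda$ onto $\R e_\lambda$. Atomicity says their span is order dense, and order continuity upgrades this to $\sup_F \sum_{\lambda \in F} P_\lambda = I_X$ in $\Lr X$, where $F$ ranges over finite subsets of $\Lambda$. Given any band projection $\mathcal{Q}$ on $\Lr X$, the crucial observation is that for each pair $(\alpha,\beta)$ the set $\{P_\alpha T P_\beta : T \in \Lr X\}$ is a scalar multiple of a single positive rank-one operator, and hence is a one-dimensional projection band in $\Lr X$. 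Since band projections restrict to band projections on projection bands and the only band projections on a one-dimensional Banach lattice are $0$ and the identity, $\mathcal{Q}$ acts as one of these on each such band. Collecting the "identity" pairs into $\Gamma \subseteq \Lambda \times \Lambda$, I would then verify $\mathcal{Q} = \mathcal{P}_\Gamma$ first on finite sums $\sum_{\alpha \in F,\beta \in G} P_\alpha T P_\beta$ and extend to arbitrary positive $T \in \Lr X$ by the automatic order continuity of band projections.

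For necessity I would argue by contrapositive. Assuming every band projection on $\Lr X$ is inner, atomicity of $X$ follows from the embedding $P \mapsto L_P R_P$ of the Boolean algebra of band projections on $X$ into that of $\Lr X$ (by \cref{cor:bp_multop}), combined with \cref{prop:innerbp_boole}: the hypothesis identifies the Boolean algebra of band projections on $\Lr X$ with a power-set algebra $2^{\Lambda \times \Lambda}$ for some pairwise disjoint family $\{P_\lambda\}$, and atoms of this power-set must pull back to atoms in $X$. For order continuity, I would use that any family $\{P_\lambda\}$ for which inner projections exhaust all band projections must in particular have $\sup_\lambda P_\lambda = I_X$ in the lattice of band projections (since $\mathcal{P}_{\Lambda \times \Lambda}$ would have to equal the identity on $\Lr X$), and then use a disjoint positive sequence in $X$ that fails to converge in order to contradict this.

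The main obstacle is the order-continuity half of necessity: identifying a band in $\Lr X$ that truly detects the failure of order continuity and ruling out \emph{every} choice of family $\{P_\lambda\}$ and subset $\Gamma$ requires controlling not just the Boolean algebra structure but also the interaction between the Riesz--Kantorovich supremum in the definition of $\mathcal{P}_\Gamma$ and the order limits available in $\Lr X$.
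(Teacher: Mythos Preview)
The paper does not actually prove this theorem; it is quoted verbatim from \cite[Theorem 3.1]{munoz-lahoz_tradacete2024} without argument, so there is no proof in the present paper to compare against. I can still comment on your proposal on its own merits.

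Your sufficiency argument is essentially correct: when $X$ is atomic and order continuous, the rank-one operators $P_\alpha T P_\beta$ are atoms of $\Lr X$, each one-dimensional band $\{P_\alpha T P_\beta : T\in\Lr X\}$ is either contained in or disjoint from the range of a given band projection $\mathcal{Q}$, and order continuity of band projections lets you pass from the finite approximants $\sum_{F\times G}P_\alpha T P_\beta$ to arbitrary $T\ge 0$.

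The necessity argument, however, has a genuine gap in the atomicity half. You invoke \cref{prop:innerbp_boole} to conclude that the Boolean algebra of band projections on $\Lr X$ is isomorphic to some $2^{\Lambda\times\Lambda}$, but that proposition concerns the inner projections arising from a \emph{single fixed} family $\{P_\lambda\}$. The hypothesis ``every band projection on $\Lr X$ is inner'' allows each band projection to be realised via its \emph{own} family and its \emph{own} index set $\Gamma$; nothing in the definition forces a common family, so you cannot directly read off a global power-set structure, and the step ``atoms of this power-set must pull back to atoms in $X$'' is unsupported. A workable route is instead to exhibit, for any non-atomic $X$, a specific band projection on $\Lr X$ (for instance one built from the centre or from a diffuse component) and show by hand that no choice of $\{P_\lambda\}$ and $\Gamma$ can reproduce it. You have correctly identified that the order-continuity half is the harder one and left it essentially open; the same phenomenon occurs there---one must produce a concrete non-inner band projection and rule out \emph{every} family, not just a canonical one.
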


\section{Banach lattice algebras}\label{sec:bla}

The goal of this section is to generalize the
properties studied in the first section to Banach lattice algebras.
\Cref{sec:bla_intro,sec:bla_ideal} introduce Banach lattice algebras
and some known results that will be useful throughout.
The reader familiar with the literature can therefore have a quick
look at the notation in these sections, and then go directly to
\cref{sec:bpoi}.

\subsection{Banach lattice algebras}\label{sec:bla_intro}

In the space of regular operators over an order complete Banach
lattice, a Banach lattice structure (given by the cone of positive
operators) and a Banach algebra structure (given by composition)
coexist. We want to study more general spaces in which these two
structures coexist, and to extend (when possible) the properties of
the first
part of the paper to this abstract setting. This will elucidate which
properties of the space of regular operators do not really depend on
the fact that they are regular operators over a Banach lattice, but
rather on the algebraic and lattice structures of the space.

But first, we need to define Banach lattice algebras.
It seems like there is no consensus in the literature on the notion of
Banach lattice algebra with identity, so we follow the one suggested
by A.\ W.\ Wickstead in \cite{wickstead2017_questions}.

\begin{defn}
    A \emph{Banach lattice algebra} is a Banach lattice $A$ together
    with a product $\ast\colon A\times A\to A$ such that
    \begin{enumerate}
        \item $(A,\ast)$ is a Banach algebra,
        \item if $x,y \in A_+$, then $x\ast y \in A_+$.
    \end{enumerate}
    When the product $\ast$ has an identity $e$ of norm one, we say that
    $A$ is a \emph{Banach lattice algebra with identity $e$}. When the product
    $\ast$ is commutative, we say that $A$ is a \emph{commutative
    Banach lattice algebra}.
\end{defn}

It is important to note the following consequence of the definition.

\begin{prop}[{\cite[Theorem 2]{bernau_huijsmans1990_unit}}]
    The identity in a Banach lattice algebra is positive.
\end{prop}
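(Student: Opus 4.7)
The plan is to work with the Jordan decomposition $e = e_+ - e_-$ of the identity and show that the negative part $e_-$ must vanish. The decisive structural fact is that $e$, being the algebraic unit, commutes with every element of $A$; in particular $e$ and $e_-$ commute, and this makes a binomial expansion of powers of $e_+$ available.

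First I would rewrite $e_+ = e + e_-$, a trivial rearrangement of the Jordan decomposition. Using that $e$ commutes with $e_-$ and that products of positive elements are positive by definition of a Banach lattice algebra, I would expand
\[
e_+^n = (e + e_-)^n = \sum_{k=0}^n \binom{n}{k} e_-^k.
\]
Every summand on the right is positive, so dropping all terms except the one with $k=1$ yields the key inequality $e_+^n \geq n\, e_-$.

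The argument then closes by combining the lattice norm estimate with submultiplicativity of the algebra norm. Since $e_+ \leq |e|$ and $e_- \leq |e|$, the lattice norm gives $\|e_+\|, \|e_-\| \leq \| \, |e| \, \| = \|e\| = 1$. Monotonicity of the norm on the positive cone, together with submultiplicativity, then yields
\[
n \|e_-\| \leq \|e_+^n\| \leq \|e_+\|^n \leq 1
\]
for every $n \in \N$, which forces $\|e_-\| = 0$, and therefore $e = e_+ \geq 0$.

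I do not expect a serious obstacle: the only step that requires some care is spotting the identity $e_+ = e + e_-$ and exploiting the fact that $e$ commutes with everything, which together unlock the binomial expansion. The rest is a one-line growth-versus-boundedness argument.
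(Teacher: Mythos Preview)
The paper does not supply its own proof of this proposition; it simply quotes the result from Bernau--Huijsmans. So there is nothing in the paper itself to compare your argument against. Your overall strategy is correct and is essentially the classical one, but there is a slip in the execution.

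In the binomial expansion $e_+^n = \sum_{k=0}^n \binom{n}{k} e_-^k$, the term for $k=0$ is $e_-^0 = e$, which is precisely the element whose positivity you are trying to establish. You therefore cannot claim that \emph{every} summand on the right is positive and then drop all but the $k=1$ term to conclude $e_+^n \ge n\, e_-$. The fix is immediate: separate off the $k=0$ term and write
\[
e_+^n - e \;=\; \sum_{k=1}^n \binom{n}{k} e_-^k \;\ge\; n\, e_-,
\]
where now every summand on the right genuinely is positive. Substituting $e = e_+ - e_-$ gives $e_+^n \ge e_+ + (n-1)e_- \ge (n-1)e_-$, and the remainder of your norm argument goes through with $n-1$ in place of $n$: one obtains $(n-1)\|e_-\| \le \|e_+^n\| \le \|e_+\|^n \le 1$ for all $n$, hence $e_- = 0$.
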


Let us pause here to present some classical examples of Banach lattice
algebras. We shall resort to them when presenting examples and
counterexamples in our exposition.

\begin{example}
    \begin{enumerate}
        \item Let $K$ be a compact Hausdorff space. The space
            $C(K)$ of real-valued continuous functions on $K$ is a
            commutative Banach lattice algebra with pointwise order
            and product, and supremum norm. The identity is the constant one function.
        \item Let $X$ be an order complete Banach lattice. We have
            seen in \cref{sec:regop_intro} that the
            space of regular operators $\Lr X$ is a Banach lattice,
            and at the same time a Banach algebra, in which the
            composition of positive operators is positive. It is,
            therefore, a Banach lattice algebra, with
            identity $I_X$. This Banach lattice algebra need not be
            commutative.
        \item Let $G$ be a locally compact group, let $\lambda $ be a
            left invariant Haar measure, and consider the Banach
            lattice $L_1(G)=L_1(\lambda)$. The convolution
            multiplication on $L_1(G)$, defined for $\phi ,\psi \in
            L_1(G)$ by
            \[
                (\phi \ast \psi )(g)=\int_{G}^{} \phi (gh)\psi
                (h^{-1})\,d \lambda (h),\quad\text{where }g \in G,
            \]
            makes $L_1(G)$ into a Banach algebra. It is easy to check that
            the product of positive elements is positive, so $L_1(G)$
            is Banach lattice algebra. This Banach lattice
            algebra is commutative if and only if $G$ is abelian, and
            has an identity if and only if $G$ is discrete.
        \item Let $M(G)$ be the space of signed regular Borel measures
            on a locally compact group~$G$. Then $M(G)$ is a Banach
            lattice if we set $\mu \ge \nu $, $\mu ,\nu \in M(G)$,
            whenever $\mu
            (A)\ge \nu (A)$ for every Borel set $A$, and give it the
            norm $\|\mu \|=|\mu |(G)$. The convolution multiplication
            defined by
            \[
                (\mu \ast \nu )(A)=\int_G\int_{G} \chi _A(st)\,d \mu
                (s)\, d \nu (t),
            \]
            where $\chi _A$ denotes the characteristic function of the
            Borel set $A$, gives $M(G)$ a Banach algebra structure. Again, this
            space is a Banach lattice algebra. Its identity is the point mass measure on the identity of $G$.
        \item Let $\{A_i\}_{i \in I}$ be a family of Banach lattice
            algebras idexed by a set $I$, and let ${1\le p\le \infty}$.
            Then their $\el p$-sum $\el p(A_i)$, equipped with
            coordinate-wise order and
            product, is again a Banach lattice algebra, for if
            $x=(x_i)$ and $y=(y_i)$ are elements of $\el p(A_i)$, then
            \[
            \|xy\|_p\le \|x\|_p \|y\|_\infty \le \|x\|_p \|y\|_p.
            \]
            It is clear
            that $\el p(A_i)$ is commutative if and
            only if $A_i$ is commutative for each $i \in I$. Also,
            $\el \infty (A_i)$ has identity if and only if $A_i$ has
            identity for each $i \in I$.
    \end{enumerate}
\end{example}

\subsection{The ideal generated by the identity
element}\label{sec:bla_ideal}

As explained in \cref{sec:center}, the ideal generated by the identity
element in $\Lr X$ is the center of the Banach lattice $X$, an ideal of operators
extensively studied in the literature. It turns out that the
notion of the ideal generated by the identity element in Banach
lattice algebras has also been studied, and that properties analogous
to those in \cref{thm:center} hold in general.

If $A$ is a Banach lattice algebra with identity $e$, we adopt the
convention of \cite{scheffold1984} and denote
by $A_e$ the (order) ideal generated by $e$ in $A$. The main
properties of this ideal are collected in the following result.

\begin{thm}\label{thm:bla_ideal}
    Let $A$ be a Banach lattice algebra with identity $e$.
    \begin{enumerate}
        \item The space $(A_e, \|{\cdot }\|_e)$, with the order and
            product of $A$, is a Banach lattice
            algebra, lattice and algebra isometric to $C(K)$,
            for a certain compact Hausdorff $K$.\footnote{Note that
                the analogy with \cref{thm:center} is not complete:
                here we are endowing the ideal generated by the
                identity with its associated norm, whereas in
                \cref{thm:center} one works with the operator norm. As
                it turns out, assumption $\|e\|=1$ implies
                that both norms are equal. Yet we need spectra to show
                this and it will have to wait until
            \cref{sec:bla_spectra}. For the time being, it suffices to
        know that $A_e$ is closed in $A$. This apparent inconvenience
    has an advantage: all the contents of this section are valid
under the weaker hypothesis $e\ge 0$, which is another common
definition of Banach lattice algebras with identity.}
        \item The ideal $A_e$ is a principal projection band in $A$.
        \item The space $A_e$ is an inverse closed subalgebra of $A$.
    \end{enumerate}
\end{thm}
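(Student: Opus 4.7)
My plan is to establish the three parts in sequence, the first two resting on Kakutani's representation of AM-spaces with unit and the Freudenthal spectral theorem, and the third then following cleanly from a standard Banach-algebra boundary-of-spectrum argument.

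For part (i), I would first verify that $A_e$ is closed under the product: if $|x|\le \lambda e$ and $|y|\le \mu e$, then submultiplicativity of the modulus (valid in any Banach lattice algebra) combined with $e^2 = e$ yields $|xy| \le |x||y| \le \lambda\mu e$, so $xy \in A_e$ and $\|xy\|_e \le \|x\|_e\|y\|_e$. Since $e$ is a strong order unit with $\|e\|_e = 1$, the space $(A_e,\|{\cdot}\|_e)$ is an AM-space with unit, and Kakutani's theorem provides a lattice isometry $\Phi \colon A_e \to C(K)$ sending $e$ to $\mathbf{1}$. The main obstacle is to promote $\Phi$ to an algebra isometry. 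The plan is to observe that order idempotents in $A_e$ correspond under $\Phi$ to characteristic functions of clopen subsets of $K$; then the Freudenthal spectral theorem applied to $e$ implies that the linear span of order idempotents is uniformly dense in $A_e$. On this dense subspace $\Phi$ is patently multiplicative, since for order idempotents $p,q$ one can check directly that $pq$ is again an order idempotent matching the pointwise product of the corresponding characteristic functions; continuity extends multiplicativity to all of $A_e$.

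For part (ii), I would apply Freudenthal's spectral theorem in $A$ to the element $e$: for every $a \in A_+$, the net $(a \wedge ne)_{n\in\N}$ is increasing, and its supremum is the component of $a$ in the principal band generated by $e$. To conclude that this band coincides with $A_e$, it suffices to show that $A_e$ is order closed in $A$. This can be verified by transferring order convergence via $\Phi$ to $C(K)$, where an order bounded net in $A_e$ stays uniformly bounded by a scalar multiple of $\mathbf{1}$, so its order limit remains inside $A_e$. Combined with the obvious inclusion of $A_e$ into the band generated by $e$, this yields a band decomposition $A = A_e \oplus A_e^d$, confirming that $A_e$ is a principal projection band.

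For part (iii), let $x \in A_e$ be invertible in $A$ and write $f = \Phi(x) \in C(K)$. Since $\Phi$ is an algebra isomorphism (by (i)), the spectrum of $x$ relative to $A_e$ equals $f(K)$. A standard Banach-algebra fact states that the topological boundary of the spectrum in a subalgebra is contained in the spectrum in the larger algebra. Hence, if $f$ vanished at some point of $K$, then $0$ would lie in the boundary of $f(K)$ and therefore in the spectrum of $x$ in $A$, contradicting invertibility. Thus $f$ is bounded away from zero, $1/f \in C(K)$, and $\Phi^{-1}(1/f) \in A_e$ is the inverse of $x$ in $A_e$.
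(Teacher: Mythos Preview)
Your plan has a genuine gap in part~(i) and a circularity in part~(ii); part~(iii) is fine.

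\textbf{Part (i).} The key step---that the linear span of order idempotents is uniformly dense in $A_e$---is false in general. The Freudenthal spectral theorem requires the principal projection property (or Dedekind $\sigma$-completeness), which $A_e\cong C(K)$ need not enjoy. Concretely, take $A=C([0,1])$: then $A_e=A$, the only order idempotents are $0$ and $\mathbf 1$, and their span is just the constants. So you cannot verify multiplicativity of $\Phi$ on a dense set of step elements; a genuinely different argument is needed. The paper offers two: Martignon's uniqueness of the Banach lattice algebra product on $C(K)$ with identity $\mathbf 1$, and a Gelfand-style construction in which $K$ is built directly as the space of simultaneous lattice-and-algebra homomorphisms $A_e\to\R$, so that multiplicativity of the representation is automatic. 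There is also a secondary issue in your sketch: even given two order idempotents $p,q$, showing that $pq$ is again an order idempotent with $\Phi(pq)=\Phi(p)\Phi(q)$ already uses $pq=qp$, which is not obvious a~priori; in the paper, commutativity of $A_e$ is a \emph{consequence} of the $C(K)$ representation, not an input to it.

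\textbf{Part (ii).} Saying ``an order bounded net in $A_e$ stays uniformly bounded by a multiple of $\mathbf 1$, so its order limit remains in $A_e$'' begs the question: the supremum is taken in $A$, and nothing you have said prevents it from falling outside $A_e$. Moreover, $A$ is not assumed order complete, so even the existence of $\sup_n a\wedge ne$ must be established. The paper handles both points: first it proves that $(a\wedge ne)_n$ is norm-Cauchy via a computation inside the $C(K)$ model of $A_e$, which yields the projection onto the band $B_e$ generated by $e$; then it shows $B_e=A_e$ by proving that $L_a$ is an orthomorphism for every $a\in(B_e)_+$ (as a norm limit of the central operators $L_{a\wedge ne}$), hence central by the Abramovich--Veksler--Koldunov theorem, which forces $a\le\lambda e$.

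\textbf{Part (iii).} Your boundary-of-spectrum argument is correct and is a nice alternative to the paper's route, which instead uses the band decomposition from~(ii) to write $a^{-1}=a_1+a_2$ with $a_1\in A_e$, $a_2\in A_e^{d}$, and then shows directly that $aa_2\in A_e\cap A_e^{d}=\{0\}$, whence $a_2=0$.
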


The contents of previous theorem are scattered through the literature:
the first assertion follows from the results by L.\ Martignon in \cite{martignon1980},
the second assertion is due to E.\ Scheffold \cite[Theorem 3]{scheffold1984}, while the third is
mentioned by C.\ B.\ Huijsmans in \cite{huijsmans1995} without any further reference.
Since this result will be central in our exposition, we devote the
rest of this section to present a full proof of it.

To prove the first part of the theorem, we need to talk about Banach lattice algebras
that are AM-spaces with unit, in which the order
unit coincides with the algebraic identity.

\begin{defn}
    Let $A$ be a Banach lattice algebra with identity $e$. If $A$ is also
    an AM-space with unit $e$ (i.e., if $A_e=A$ and $\|x\|=\inf \{\, \lambda >0 :
    |x|\le \lambda e\, \}$ for every $x \in A$), we say that $A$ is an
    \emph{AM-algebra with unit $e$}.
\end{defn}

The following theorem asserts that AM-algebras with unit are nothing
but algebras
of continuous functions.

\begin{thm}\label{thm:AMalg}
    Every AM-algebra with unit is algebra and lattice isometric to
    $C(K)$, for a certain compact Hausdorff $K$.
\end{thm}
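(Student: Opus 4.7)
The plan is to use the Kakutani--Bohnenblust--Krein representation theorem for AM-spaces with unit as the starting point, and then upgrade the resulting lattice isometry into an algebra isomorphism by exploiting how the AM-unit norm on $A$ controls the algebra product through $e$.

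More concretely, Kakutani--Bohnenblust--Krein immediately yields a surjective lattice isometry $\Phi\colon A \to C(K)$ for some compact Hausdorff $K$, sending $e$ to the constant function $1$. Thus the only thing left to check is that $\Phi(ab) = \Phi(a)\Phi(b)$. I plan to verify this pointwise by showing that, for every $p \in K$, the evaluation $\phi_p := \mathrm{ev}_p \circ \Phi \colon A \to \R$ is a multiplicative linear functional; once this is granted, $\Phi(ab)(p) = \phi_p(a)\phi_p(b) = \Phi(a)(p)\Phi(b)(p)$ for every $p$, and the conclusion is immediate.

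The key ingredient for the multiplicativity of $\phi_p$ is the inequality
\[
|ab| \le \|b\|\,|a| \qquad \text{for all } a,b \in A,
\]
which follows directly from the AM-unit estimate $|b| \le \|b\|\, e$: multiplying on the left by $a \in A_+$ (a positivity-preserving operation) gives $|ab| \le a|b| \le \|b\|\,a = \|b\|\,|a|$, and the general case is obtained by splitting $a = a^+ - a^-$. Since $\Phi$ is a lattice isometry with $\Phi(e) = 1$, the functional $\phi_p$ is positive, satisfies $\phi_p(e) = 1$, and $\phi_p(|c|) = |\phi_p(c)|$ for all $c \in A$; applying it to the key inequality yields $|\phi_p(ab)| \le \|b\|\,|\phi_p(a)|$, so that $\ker\phi_p$ is a right algebra ideal. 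Because $\phi_p(e) = 1$ this kernel has codimension one, so decomposing any $a \in A$ as $a = \phi_p(a)\, e + a_0$ with $a_0 \in \ker\phi_p$ gives $\phi_p(ab) = \phi_p(a)\phi_p(b) + \phi_p(a_0 b) = \phi_p(a)\phi_p(b)$.

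The main obstacle is precisely the passage from the Kakutani lattice isometry to an algebra isomorphism, and it is entirely controlled by the inequality $|ab| \le \|b\|\,|a|$; this is the place where the AM-unit nature of $e$ interacts essentially with the algebra product. As a pleasant byproduct, the argument also shows that every AM-algebra with unit is automatically commutative.
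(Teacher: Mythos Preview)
Your proof is correct and genuinely different from the paper's main argument. The paper actually acknowledges that the theorem follows from Kakutani's representation plus a result of Martignon characterising the pointwise product on $C(K)$, but then deliberately gives an \emph{alternative} Gelfand-style proof: it builds $K$ directly as the set of lattice-and-algebra homomorphisms $A\to\R$, develops the maximal ideal theory of Banach lattice algebras (principal ideals, Zorn, one-dimensional quotients) to show there are enough such homomorphisms, and finishes with Stone--Weierstrass for surjectivity.

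Your route sits closer to the Kakutani--Martignon approach the paper mentions but bypasses Martignon entirely: you black-box Kakutani to get the lattice isometry $\Phi$, and then upgrade it to an algebra map via the single inequality $|ab|\le\|b\|\,|a|$, which neatly encodes the interaction between the AM-unit and the product and forces each $\ker\phi_p$ to be a right algebra ideal. This is shorter and more elementary than the paper's proof (no ideal lemmas, no explicit Stone--Weierstrass), and as you note it yields commutativity for free. The paper's longer argument, on the other hand, is more self-contained---it does not invoke Kakutani---and develops the two-sided ideal structure of Banach lattice algebras, which is of some independent interest and mirrors the classical Gelfand theory.
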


Previous result is a direct consequence of Kakutani's representation
theorem for AM-spaces with unit, and the following
characterization of the pointwise product in $C(K)$ due to L.\
Martignon.

\begin{prop}[{\cite[Proposition 1.4]{martignon1980}}]
    Let $K$ be a compact Hausdorff space and let $\star\colon
    C(K)\times C(K)\to C(K)$ be a binary operation such that
    $(C(K),\star)$
    with pointwise order is a Banach lattice algebra.
    Suppose also that the constant one function is the identity for
    $\star$. Then $\star$ is the pointwise multiplication.
\end{prop}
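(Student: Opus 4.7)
The plan is to show that, for each positive $f\in C(K)$, the left multiplication operator $L_f\colon g\mapsto f\star g$ is a central operator on $C(K)$, and then to identify it by evaluating at the constant one function, which I denote $\mathbf{1}$. By hypothesis $\mathbf{1}$ is the $\star$-identity, and it is also the order unit of $C(K)$; it is this coincidence of the two roles that will do all the work.

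For the first step, positivity of $\star$ on $C(K)_+\times C(K)_+$ says that $L_f$ is a positive operator, so the standard argument applied to $\pm g\le |g|$ yields $|f\star g|\le f\star |g|$ for every $g\in C(K)$. Using that $f\le \|f\|_\infty\,\mathbf{1}$ together with positivity of right multiplication by $|g|$,
\[
f\star|g|\le \|f\|_\infty\,(\mathbf{1}\star|g|)=\|f\|_\infty\,|g|,
\]
so $|L_f g|\le \|f\|_\infty\,|g|$; hence $L_f$ is a central operator in the sense of \cref{sec:center}.

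For the second step, I would appeal to the description of central operators on $C(K)$ recorded in \cref{sec:center}: every such operator is pointwise multiplication by some fixed continuous function. Thus there exists $h_f\in C(K)$ with $f\star g=h_f\cdot g$ for every $g\in C(K)$. Setting $g=\mathbf{1}$ forces $h_f=f\star \mathbf{1}=f$, so $f\star g=f\cdot g$ whenever $f\ge 0$. Writing a general $f\in C(K)$ as $f_+-f_-$ and invoking bilinearity of both $\star$ and pointwise multiplication extends the identity $f\star g=f\cdot g$ to all of $C(K)\times C(K)$.

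The only conceptual input is the observation that, because the $\star$-identity coincides with the order unit, every left multiplication by a positive element is automatically dominated by a multiple of the identity operator, so the positivity hypothesis on $\star$ places $L_f$ in the center of $C(K)$. After that, the rest is elementary bookkeeping. I do not foresee a significant obstacle: the argument avoids the Riesz--Kantorovich formulae entirely, and the only non-elementary fact used is the classical identification of central operators on $C(K)$ with pointwise multiplication operators.
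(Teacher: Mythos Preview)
Your argument is correct. The key observation---that because the $\star$-identity coincides with the order unit, each positive left multiplication $L_f$ is automatically dominated by $\|f\|_\infty I$ and hence central---is exactly right, and the identification of $\mathcal Z(C(K))$ with pointwise multipliers (recorded in \cref{sec:center}) then pins down $L_f$ by evaluation at $\mathbf 1$.

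As for comparison: the paper does not actually prove this proposition. It attributes the result to Martignon, remarks only that the original proof ``relies heavily on Banach lattice techniques,'' and then deliberately \emph{bypasses} the proposition by giving an independent, Gelfand-style proof of the enclosing \cref{thm:AMalg} via maximal (order-and-algebraic) ideals and real-valued lattice-algebra homomorphisms. Your route is more direct for this particular statement and is very much in the Banach-lattice spirit the paper ascribes to Martignon; the paper's detour, on the other hand, has the advantage of establishing \cref{thm:AMalg} without first needing to know that any compatible product on $C(K)$ is the pointwise one.
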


The proof of this last result relies heavily on Banach lattice techniques.
Here we want to provide an alternative proof of \cref{thm:AMalg} in the style
of the Gelfand transform: working with maximal (order and algebraic)
ideals and real-valued (lattice and algebra) homomorphisms. This
argument follows closely the proof of Kakutani's representation
theorem from \cite[Chapter 4]{troitsky}.

There are two kinds of ideals present in Banach lattice algebras: the
ideals as a Banach lattice and the ideals as a Banach algebra. There
are also subspaces that are ideals in both senses. There does not seem
to be a common terminology for these spaces in the literature. We will
refer to ideals in the Banach lattice sense as
\emph{order ideals}, and to ideals in the Banach algebra sense as
\emph{algebraic ideals} (these, in turn, can be left or right ideals,
or both). \emph{Ideals} will be the subspaces that are
at the same time an order ideal and a (two-sided) algebraic ideal. The
following is a simple characterization of the ideal generated
by an element (that is, the intersection of all the ideals containing
the element) in a Banach lattice algebra with identity.

\begin{lem}\label{lem:bla_principal_id}
    Let $A$ be a Banach lattice algebra with identity and let $a \in A$. Then
    the ideal generated by $a$ is
    \[
    \{\, x \in A : |x|\le r|a|r' \text{ for some }r,r' \in A_+ \, \}.
    \]
\end{lem}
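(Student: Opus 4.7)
Denote the set on the right hand side by $J$. The plan is to show (a) that $a\in J$, (b) that $J$ is an ideal, and (c) that $J$ is contained in every ideal of $A$ containing $a$.

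For (a), note that since $A$ has identity $e\ge 0$, taking $r=r'=e$ gives $|a|\le e|a|e=|a|$, so $a\in J$. Step (c) is essentially by definition of ``ideal'': if $I$ is an ideal of $A$ containing $a$, then $|a|\in I$ because $I$ is solid; for any $r,r'\in A_+$, the element $r|a|r'$ lies in $I$ because $I$ is a two-sided algebraic ideal; and finally, if $|x|\le r|a|r'$ for such $r,r'$, then $x\in I$ again by solidity. Hence $J\subseteq I$.

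The bulk of the work is (b). Closure under scalar multiplication and under the passage from $x$ to elements $y$ with $|y|\le |x|$ are immediate from the definition of $J$. For closure under addition, if $|x_i|\le r_i|a|r_i'$ for $i=1,2$, then
\[
|x_1+x_2|\le |x_1|+|x_2|\le r_1|a|r_1'+r_2|a|r_2'\le (r_1+r_2)|a|(r_1'+r_2'),
\]
where the last inequality holds because the two ``cross terms'' $r_1|a|r_2'$ and $r_2|a|r_1'$ are products of positive elements and hence positive; so $x_1+x_2\in J$. For the absorption property, if $x\in J$ with $|x|\le r|a|r'$ and $b\in A$, then the submultiplicativity of the modulus in a Banach lattice algebra (which follows by expanding $bx=(b^+-b^-)(x^+-x^-)$ and collecting terms in the cone) yields
\[
|bx|\le |b||x|\le (|b|r)|a|r',
\]
so $bx\in J$, and symmetrically $xb\in J$. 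Thus $J$ is an ideal, and combining (a)--(c) it is precisely the ideal generated by $a$.

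The only place where I expect a reader to pause is the addition step: one might initially hope to combine the bounds $r_i|a|r_i'$ via a single $r|a|r'$ using $r=r_1+r_2$, $r'=r_1'+r_2'$ without further thought, but it is exactly the positivity of the cross terms $r_1|a|r_2'$ and $r_2|a|r_1'$ (which is where the compatibility of the product and the order in a Banach lattice algebra enters) that makes this work.
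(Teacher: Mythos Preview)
Your proof is correct and follows essentially the same approach as the paper: verify that the displayed set is an ideal (with the addition step handled via the positivity of the cross terms $r_1|a|r_2'$ and $r_2|a|r_1'$), that it contains $a$ thanks to the identity, and that it sits inside any ideal containing $a$. The paper's argument is virtually identical, including the same inequality $|x_1|+|x_2|\le (r_1+r_2)|a|(r_1'+r_2')$ and the use of $|bx|\le |b|\,|x|$ for absorption.
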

\begin{proof}
    Call the displayed set $I$. Let us check that this set is indeed
    an ideal. If $x,y \in I$, say $|x|\le rar'$ and $|y|\le sas'$ for some
    $r,r',s,s' \in A_+$, then
    \[
    |x+y|\le |x|+|y|\le rar'+sas'\le (r+s)a(r'+s')
    \]
    so $x+y \in I$. Clearly, $-x$ and $|x|$ are elements of $I$, so $I$ is a
    sublattice. From the definition it follows that $I$ is an
    order ideal. To check that it is also an algebraic ideal, let $z
    \in A$. Then we have
    \[
    |zx|\le |z||x|\le (|z|r)ar'\quad\text{and}\quad|xz|\le |x| |z|\le
    ra(r'|z|)
    \]
    so both $zx, xz \in I$, and we have thus shown that $I$ is an
    ideal.

    Since $A$ has an identity, $a \in I$.
    Let $J$ be any ideal of $A$ containing $a$. Then, since $J$ is a
    sublattice and an
    algebraic ideal, $r|a|r' \in J$ for all $r,r' \in A_+$. Since
    $J$ is an order ideal, $x \in J$ whenever $|x|\le rar'$. Hence
    $I\subseteq J$, and so $I$ is the ideal generated by $a$ in $A$.
\end{proof}

As usual, we say that an ideal in a Banach lattice algebra is
\emph{maximal} if it is not contained in any proper ideal.
The following is pure routine.

\begin{lem}\label{lem:bla_max_id}
    In a Banach lattice algebra with identity, any proper ideal is contained in a maximal ideal.
\end{lem}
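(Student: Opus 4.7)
The plan is a standard Zorn's lemma argument, and the key observation to set it up is that in a Banach lattice algebra with identity $e$, an ideal $J$ is proper if and only if $e\notin J$. Indeed, $J$ being a (two-sided) algebraic ideal forces $a=ae\in J$ for every $a\in A$ whenever $e\in J$, so $J=A$; conversely, $J=A$ trivially contains $e$. This turns ``proper'' into a pointwise condition, which is precisely what makes the chain argument go through.

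Given a proper ideal $I$, I would consider the poset $\mathcal{F}$ of all proper ideals of $A$ containing $I$, ordered by inclusion. It is nonempty since $I\in \mathcal{F}$. For a chain $\mathcal{C}\subseteq \mathcal{F}$, I would take $J=\bigcup_{K\in \mathcal{C}}K$ as candidate upper bound and verify routinely that it is again an ideal: linearity holds because any two elements of $J$ sit inside some common $K\in \mathcal{C}$ by the chain property; the sublattice and order-ideal properties pass elementwise through the union (if $y\in K$ and $|x|\le |y|$, then $x\in K\subseteq J$); and the two-sided algebraic-ideal property likewise transfers, since for $x\in K$ and $z\in A$ both $zx$ and $xz$ lie in $K$.

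The crucial point — and the only substantive step — is that $J$ remains proper. But this is immediate from the reformulation above: since each $K\in \mathcal{C}$ is proper, $e\notin K$ for every $K$, hence $e\notin J$, so $J\neq A$. Thus $J\in \mathcal{F}$ is an upper bound for $\mathcal{C}$, and Zorn's lemma yields a maximal element of $\mathcal{F}$, which is a maximal ideal of $A$ containing $I$. There is no real obstacle here; the lemma is called ``pure routine'' in the paper for exactly this reason, and no topological closure assumption or deeper Banach lattice structure is invoked beyond the definition of an ideal.
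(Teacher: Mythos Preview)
Your proof is correct and follows essentially the same approach as the paper: a Zorn's lemma argument on the poset of proper ideals containing $I$, with the union of a chain serving as upper bound, and properness preserved because $e$ lies in none of the chain members (using that an algebraic ideal containing $e$ is all of $A$). If anything, you spell out a few more routine verifications than the paper does.
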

\begin{proof}
    Let $I$ be a proper ideal in a Banach lattice algebra $A$ with
    identity $e$. Let $\mathcal{I}$ be the
    family of proper ideals in $A$ containing $I$, ordered by
    inclusion. Let $\{J_\gamma \}_{\gamma \in \Gamma }$ be a chain in
    $\mathcal{I}$.
    Then it is straightforward to check that $J=\bigcup_{\gamma \in
    \Gamma } J_\gamma $ is again an ideal containing $I$. Moreover, if
    $J=A$, then $e \in J$ and so $e \in J_\gamma $ for a certain
    $\gamma \in \Gamma $. Being $J_\gamma $ an algebraic ideal, this
    would imply $J_\gamma =A$, contradicting the definition. So $J$ is
    proper, and therefore is an upper bound of the chain in $\mathcal{I}$. By Zorn's
    lemma, there exists a maximal element in $\mathcal{I}$, and this
    element is a maximal ideal containing $I$.
\end{proof}

Let $A$ be a Banach lattice algebra and let $I$ be a closed ideal. Perform
the usual quotient $A/I$. Being $I$ a two-sided closed ideal, $A/I$ is
again a Banach algebra, and the quotient map $Q\colon A\to
A/I$ is an algebra homomorphism.
Since $I$ is an order ideal, $Q(A_+)$ is a cone in $A/I$, and the
order it induces makes $A/I$ into a Banach lattice, and $Q$ into a
lattice homomorphism. It is direct to check that the product of
positive elements in $A/I$ is again positive. Hence $A/I$ admits a
natural Banach lattice algebra structure, making $Q$ a lattice and
algebra homomorphism.

Since the closure of an algebraic ideal is again an
algebraic ideal, and the closure of an order ideal is again an order
ideal, the closure of an ideal in a Banach lattice algebra is again an
ideal. It is then routine to check that maximal ideals in
Banach lattice algebras with identity are closed. Moreover, we have the following.

\begin{lem}\label{lem:bla_quot_max}
    Let $A$ be an AM-algebra with unit $e$ and let $M$ be a maximal ideal
    in $A$. Then $A/M$ is lattice and algebra isomorphic to $\R$.
\end{lem}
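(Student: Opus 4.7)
The plan is to show that $A/M$ is one-dimensional by proving every element is a real scalar multiple of $\bar e := e + M$. The map $\lambda \bar e \mapsto \lambda$ will then be the desired lattice and algebra isomorphism (the algebraic part follows from $\bar e^2 = \bar e$, since $\bar e$ is the identity of $A/M$).

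First I would record that $A/M$ is a Banach lattice algebra with identity $\bar e$, that $\bar e$ is a strong order unit in $A/M$, and that $A/M$ has no proper nontrivial ideals. The first two assertions use the quotient construction discussed just before the statement, the closedness of $M$, and the fact that any lift $x$ of $\bar x$ satisfies $|x| \le \|x\|_A e$, which the quotient lattice homomorphism sends to $|\bar x| \le \|x\|_A \bar e$. The third assertion follows because any proper nontrivial ideal of $A/M$ would pull back along the quotient map $Q$ to an ideal of $A$ strictly between $M$ and $A$, contradicting maximality.

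The key step is to show that $A/M$ contains no two nonzero disjoint positive elements. Suppose $\bar y, \bar z \in (A/M)_+$ with $\bar y \wedge \bar z = 0$. Given $\bar r, \bar r' \in (A/M)_+$, the strong-unit property yields $\lambda, \lambda' > 0$ with $\bar r \le \lambda \bar e$ and $\bar r' \le \lambda' \bar e$, and positivity of multiplication then gives
\[
0 \le \bar r \bar y \bar r' \le \lambda \bar y \bar r' \le \lambda \lambda' \bar y,
\]
hence $\bar r \bar y \bar r' \wedge \bar z \le \lambda \lambda' \bar y \wedge \bar z = 0$ (using that disjointness is stable under positive scaling). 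By \cref{lem:bla_principal_id}, every element of the combined ideal of $A/M$ generated by $\bar y$ is dominated by some $\bar r \bar y \bar r'$, and is therefore disjoint from $\bar z$; so this ideal is proper, which forces $\bar y = 0$ by the absence of proper nontrivial ideals in $A/M$.

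Finally I would exploit this dichotomy. For every $\bar x \in A/M$ and every $s \in \R$, the positive and negative parts of $\bar x - s\bar e$ are disjoint and cannot both be nonzero, so either $\bar x \ge s\bar e$ or $\bar x \le s\bar e$. Since $\bar x$ lies between $\pm \lambda \bar e$ for some $\lambda$, the sets $S^- = \{\, s\in\R : \bar x \ge s\bar e \,\}$ and $S^+ = \{\, t\in\R : \bar x \le t\bar e \,\}$ are both nonempty, with $\sup S^- \le \inf S^+$. Equality must hold (otherwise any $s$ strictly between would violate the dichotomy), and by closedness of the positive cone (i.e., archimedeanness of $A/M$) the common value $s_0$ satisfies $\bar x = s_0 \bar e$. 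The main obstacle is the disjointness-propagation step, which relies crucially on the AM-hypothesis: without a strong unit, an arbitrary positive $\bar r$ need not be dominated by a multiple of $\bar e$, multiplication need not preserve disjointness, and the ideal generated by a nonzero positive element could well coincide with all of $A/M$ despite the existence of disjoint pairs.
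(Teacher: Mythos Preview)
Your proof is correct and follows essentially the same route as the paper: both hinge on showing that the ideal generated by one of two disjoint positive elements in $A/M$ is proper, using the strong-unit bound $\bar r\,\bar y\,\bar r' \le \lambda\lambda'\,\bar y$ (the paper writes this as $\|r\|\|r'\|\,|Qx|$) together with \cref{lem:bla_principal_id}, and then pulling back to contradict maximality. The only cosmetic difference is the endgame: the paper invokes the standard fact that a vector lattice of dimension $>1$ contains a disjoint pair, while you instead deduce a total order on $A/M$ and pin down each element as $s_0\bar e$ via the sup/inf argument.
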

\begin{proof}
    By the discussion preceding the lemma, $A/M$ is a Banach lattice
    algebra. Suppose that $\dim(A/M)>1$. Then there would
    exist two non-zero disjoint elements $Qx, Qy \in A/M$. We are
    going to show that the ideal $I$ generated by $|Qx|$ in $A/M$
    must be proper. Indeed, if $Qy \in I$, by
    \cref{lem:bla_principal_id} there would exist $r,r' \in A_+$ such
    that $|Qy|\le Q(r)|Q(x)|Q(r')$, and then
    \[
    |Qy|= (Q(r)|Q(x)|Q(r'))\wedge
    |Qy|\le \|r\| \|r'\| |Q(x)|\wedge |Q(y)|=0,
    \]
    which is a contradiction. Hence $I$ is proper. It is not difficult
    to check that $Q^{-1}(I)\subseteq A$ is again an ideal, because
    $Q$ is both a lattice and algebra homomorphism. Moreover,
    $M\subseteq Q^{-1}(I)\subsetneq A$, and since $x \in Q^{-1}(I)$,
    the first inclusion is proper. This contradicts that $M$ is
    maximal. So $\dim(A/M)$ must be $1$. Then $A/M$ is spanned by
    $Qe$ and the map
    \[
    \begin{array}{cccc}
    & \R & \longrightarrow & A/M \\
            & \lambda  & \longmapsto & \lambda Qe \\
    \end{array}
    \]
    is a lattice and algebra isomorphism.
\end{proof}

\begin{lem}\label{lem:bla_functionals}
    Let $A$ be an AM-algebra with unit $e$ and let $x_0 \in A_+$. If
    $x_0\not\le e$, there exists a lattice and algebra homomorphism
    $\phi \colon A\to \R$ such that $\phi (x_0)>1$.
\end{lem}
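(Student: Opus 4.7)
The strategy is to produce a maximal ideal $M$ of $A$ avoiding the positive element $y_0 := (x_0 - e)^+$ and then apply \cref{lem:bla_quot_max} to obtain $\phi$. Since $x_0 \not\le e$ we have $y_0 > 0$, so $\alpha := \|y_0\| > 0$. Because $A$ is an AM-space with unit, the infimum $\alpha = \inf\{\lambda > 0 : y_0 \le \lambda e\}$ is attained, so $y_0 \le \alpha e$; but no strictly smaller $\lambda$ works, hence $y_0 \not\le (\alpha - \varepsilon) e$ for every $\varepsilon > 0$. Equivalently, $z := \alpha e - y_0$ satisfies $z \ge 0$ and $z \not\ge \varepsilon e$ for every $\varepsilon > 0$.

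The heart of the argument is to show that the ideal $I$ generated by $z$ is proper. By \cref{lem:bla_principal_id} this amounts to ruling out $e \le r z r'$ for any $r, r' \in A_+$. I plan to do this via the estimate
\[
    r z r' \le \|r\|\,\|r'\|\,z,
\]
which combined with $e \le r z r'$ would force $z \ge (\|r\|\|r'\|)^{-1} e$, contradicting $z \not\ge \varepsilon e$. The estimate itself comes from the AM-space-with-unit structure together with the positivity of the product in a Banach lattice algebra: $r \le \|r\| e$ and $r' \le \|r'\| e$ give $\|r\| e - r,\, \|r'\| e - r' \in A_+$, so multiplying by positive elements on the appropriate side and expanding (using $e z = z e = z$) yields $r z r' \le \|r\|\, z r' \le \|r\|\|r'\|\, z$ in two steps.

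Once $I$ is known to be proper, \cref{lem:bla_max_id} extends it to a maximal ideal $M$. Since $z \in M$ and $y_0 + z = \alpha e$, having $y_0 \in M$ would imply $\alpha e \in M$ and hence $e \in M$, contradicting properness of $M$; thus $y_0 \notin M$. Finally, \cref{lem:bla_quot_max} gives a lattice and algebra isomorphism $A/M \cong \R$ sending $Qe$ to $1$; composing with the quotient map $Q \colon A \to A/M$ produces a lattice and algebra homomorphism $\phi \colon A \to \R$ with $\phi(e) = 1$. Since $y_0 \notin M$ we have $\phi(y_0) > 0$, and because $\phi$ is a lattice homomorphism, $\phi(y_0) = (\phi(x_0) - 1)^+$, yielding $\phi(x_0) > 1$.

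The main obstacle is the inequality $r z r' \le \|r\|\|r'\|\, z$: it is the only place where the AM-space structure and the multiplicative positivity of the Banach lattice algebra genuinely interact, and it is what converts the purely order-theoretic obstruction ``$z$ is not bounded below by any $\varepsilon e$'' into the algebraic obstruction ``$z$ does not generate $A$ as an ideal''. The rest is a routine assembly of the preceding lemmas.
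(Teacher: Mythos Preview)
Your proof is correct and takes a genuinely different route from the paper's. The paper generates the ideal $I$ from $(x_0-e)_-$ and uses disjointness of $(x_0-e)_\pm$ (together with the same estimate $r\,a\,r'\le\|r\|\,\|r'\|\,a$ that you isolate) to show that $(x_0-e)_+\notin I$, hence $I$ is proper. You instead generate $I$ from $z=\alpha e-(x_0-e)_+$ and use that $z$ fails to dominate any $\varepsilon e$ to show $e\notin I$. Both arguments then extend to a maximal ideal $M$ and read off $\phi$ via \cref{lem:bla_quot_max}. What your choice buys is an explicit reason why $y_0=(x_0-e)_+$ stays outside $M$: the identity $y_0+z=\alpha e$ forces $e\in M$ if both $y_0$ and $z$ lie in $M$. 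The paper's write-up, by contrast, passes directly from $(x_0-e)_+\notin I$ to $\phi((x_0-e)_+)>0$, which really requires $(x_0-e)_+\notin M$; your argument handles precisely this point and is in that sense tighter.
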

\begin{proof}
    We have $(x_0-e)_+\neq 0$. Let $I$ be the ideal generated by
    $(x_0-e)_-$. We claim that $(x_0-e)_+ \not\in I$. Indeed, if it
    were the case, according to \cref{lem:bla_principal_id} there would exist $r,r' \in A_+$ such that
    \[
        (x_0-e)_+\le r(x_0-e)_-r'\le \|r\|\|r'\|(x_0-e)_-
    \]
    which contradicts the fact that $(x_0-e)_+$ and $(x_0-e)_-$ are
    disjoint. Hence $I$ is
    proper, and by \cref{lem:bla_max_id} it is contained in a maximal
    ideal $M$. The quotient map associated
    with $M$ is a lattice and algebra homomorphism $A\to
    A/M$. Let $\phi\colon A\to \R$ be the composition of this quotient map with the
    isomorphism between $A/M$ and $\R$ given in
    \cref{lem:bla_quot_max}. This homomorphism satisfies
    \[
    \phi (x_0-e)=\phi ((x_0-e)_+)> 0,
    \]
    so $\phi (x_0)>\phi (e)=1$.
\end{proof}

With this set of rather standard tools, we can proceed to the proof of
\cref{thm:AMalg}.

\begin{proof}[Proof of \cref{thm:AMalg}]
    Let $A$ be an AM-algebra with unit $e$. Let $K$ be the set of all
    lattice and algebra
    homomorphisms $\phi \colon A\to \R$ with $\phi (e)=1$. We can view $K$ as a subset
    of $\R^{A}$. Equip $\R^{A}$ with the product topology and $K$ with the
    subspace topology. This topology is Hausdorff, because the
    product topology is. Note that, for every $\phi \in K$,
    $|\phi (x)|\le \phi (\|x\| e)=\|x\|$ holds, which
    means that $K\subseteq \prod_{x \in A} [-\|x\|,\|x\|]$. By
    Tychonoff's theorem, the latter product is compact. Moreover, $K$
    is closed in this subspace, for if $(\phi _\lambda )$ is a net in
    $K$ pointwise convergent to $\phi $, this means that $\phi
    (x)=\lim \phi _\lambda (x)$ for all $x \in A$, and then by
    continuity of the algebraic and lattice operations it is easy
    to check that
    $\phi $ is a again a lattice and algebra homomorphism with $\phi
    (e)=1$. Therefore $K$ is a compact Hausdorff space.

    It is useful to note that $K$ has sufficiently many elements.
    Indeed, for every $x \in A_+$ with $\|x\|>1$, we have $x\not \le e$,
    and by \cref{lem:bla_functionals} there is some element $\phi \in K$
    such that $\phi (x)>1$.

    Now consider the map $T\colon A\to C(K)$ where $Tx(\phi )=\phi
    (x)$ for $x \in A$ and $\phi \in K$. It is easy to check that
    $Tx$ is indeed continuous. Since
    operations in $C(K)$ are computed pointwise, and $Te(\phi )=\phi
    (e)=1$ for all $\phi \in K$, it
    follows that $T$ is a lattice and algebra homomorphism that
    preserves the identity.

    We claim $\|Tx\|=\|x\|$ for all $x \in A$. On one hand,
    $|Tx(\phi )|=|\phi (x)|\le \|x\|$, so $\|Tx\|\le \|x\|$. On the other
    hand, if $\|x\|>1$, we have seen that there exists some $\phi \in
    K$ such that $\phi (|x|)> 1$, so that $\|Tx\|\ge |Tx(\phi
    )|=\phi (|x|)> 1$. If $\|Tx\|<\|x\|$ for some $x \in A$, the
    element $|x|/\|Tx\|$ would contradict the previous observation. So
    $\|Tx\|=\|x\|$ for all $x \in A$.

    Finally, note that $T(A)$ is a closed
    subalgebra of $C(K)$ that contains the constants
    and separates points: if $\phi \neq \psi  $ in $K$, there is some
    $x \in A$ such that $(Tx)(\phi )=\phi (x)\neq \psi (x)=(Tx)(\psi
    )$. By the Stone--Weierstrass theorem, $T(A)=C(K)$, and the result
    is proved.
\end{proof}

Some preliminary remarks before presenting the proof of \cref{thm:bla_ideal}. For the second
assertion we will provide an argument similar to
\cite[Theorems 1 and 2]{huijsmans1988} instead of
the original one appearing in \cite{scheffold1984} because it serves
as a first illustration of what will be
our main trick in the coming sections, namely, to consider multiplication
operators on a Banach lattice algebra, and then exploit the
known properties of the space of regular operators. On a Banach lattice
algebra $A$, operators of the form
\[
\begin{array}{cccc}
L_a\colon& A & \longrightarrow & A \\
        & x & \longmapsto & ax \\
\end{array},\quad
\begin{array}{cccc}
R_b\colon& A & \longrightarrow & A \\
        & x & \longmapsto & xb \\
\end{array},
\]
for certain $a,b \in A$, are called \emph{left} and \emph{right multiplication
operators}, respectively. Operators of the form $L_aR_b$, for some
$a,b \in A$, are called \emph{multiplication operators}.

\begin{proof}[Proof of \cref{thm:bla_ideal}]
    \begin{enumerate}
        \item We are going to check that $(A_e,\|{\cdot }\|_e)$ is an
            AM-algebra with unit $e$. Then the result will follow from
            \cref{thm:AMalg}. That
            $(A_e,\|{\cdot }\|_e)$ is an AM-space with unit $e$ is
            clear. Moreover, it is a subalgebra, for if $x,y \in A_e$:
            \[
            |xy|\le |x| |y|\le \|x\|_e \|y\|_e e.
            \]
            From this also follows that $\|xy\|_e\le \|x\|_e \|y\|_e$,
            so it is a Banach algebra. Its algebraic identity is
            obviously $e$.
        \item We prove this fact in two steps. First, we show that
            $B_e$, the band generated by $e$, is a projection band.
            And then we show that $A_e=B_e$.

            To prove that $B_e$ is a projection band, it suffices to
            check that $\sup_n a\wedge ne$ exists for every $a \in A_+$
            (in fact, we show a little more, because we will need it
            later on). Denote $a_n=a\wedge ne$, and observe that for
            any $m\ge n\ge 1$:
            \[
                (a_m-a_n)\wedge (ne-a_n)=(a_m\wedge ne)-a_n=a_n-a_n=0.
            \]
            Using the representation of $I_e$ as a $C(K)$ space, and
            seeing $a_m$, $a_n$ and $ne$ as continuous functions in
            $K$, we are saying that $(a_m-a_n)(t)\wedge (ne-a_n)(t)=0
            $ for every $t \in K$. It follows that $a_m -a_n$ and $a_m-n
            ^{-1}a_ma_n=n^{-1}a_m(ne-a_n)$ are also disjoint, so
            \[
                (a_m-a_n)\wedge (a_m-n^{-1}a_ma_n)=0
            \]
            which implies
            \[
            a_m=a_n\vee (n^{-1}a_ma_n).
            \]
            It follows that
            \begin{equation}\label{eq:uniform_conv}
            0\le \|a_m-a_n\|=\|(n^{-1}a_ma_n-a_n)_+\|\le
            n^{-1}\|a_ma_n\|\le n^{-1}\|a\|^2,
            \end{equation}
            so $(a_n)$ is a Cauchy sequence. It therefore converges  to
            some $b \in A_+$. Since $(a_n)$ is increasing,
            we must have $b=\sup_n a\wedge ne$, and this shows that
            $B_e$ is a projection band.

            To prove that $A_e$ is a band we need to show that $A_e=B_e$, and for this it
            suffices to check that $a \in A_e$ whenever $a \in
            (B_e)_+$. For every $a \in (B_e)_+$, inequality $a\wedge ne
            \le ne$ implies that $L_{a\wedge ne}$ is a central
            operator for every $n\ge 1$. In particular, it is an orthomorphism. If $x,y
            \in A$ are such that $|x|\wedge |y|=0$, then $|x|\wedge
            |(a\wedge ne)y|=0$ for every $n\ge 1$. But since $a$ is an
            element of the band generated by $e$, we have
            $a=\sup_n a\wedge ne$. Then $a\wedge ne$ converges to $a$
            by \eqref{eq:uniform_conv}. Since both multiplication and
            lattice operations are continuous for the norm, it follows
            \[
            0=\lim_{n \to \infty}|x|\wedge |(a\wedge ne)y|=|x|\wedge
            |ay|.
            \]
            This shows that $L_a$ is an orthomorphism. By
            \cref{thm:central_orto_band},
            $L_a$ must be a central operator, that is, there exists
            some $\lambda \in \R_+$ such that $-\lambda I_A\le L_a\le
            \lambda I_A$. In particular, $a=L_a(e)\le \lambda e$, and
            $a \in A_e$ as wanted.
        \item Let $a \in A_e$ and suppose that $a^{-1}$ exists in $A$.
            By the previous assertion we can decompose
            $a^{-1}=a_1+a_2$ with $a_1 \in A_e$ and $a_2\in A_e^{d}$.
            Then $e=a a^{-1}=a a_1+a a_2$. Since $A_e$ is a
            subalgebra, $a a_1\in A_e$. Also
            \[
            0\le |a a_2|\wedge e\le (|a| |a_2|)\wedge e\le \|a\|_e
            |a_2|\wedge e=0,
            \]
            so $a a_2\in A_e^{d}$. But $a a_2=e- a a_1\in A_e$, so it
            must be $a a_2=0$. Multiplying by $a ^{-1}$ we get $a_2=0$
            and $a^{-1}=a_1 \in A_e$, as wanted.\qedhere
    \end{enumerate}
\end{proof}

\begin{example}\label{ex:ALalg}
    Let $A$ be a Banach lattice algebra with identity $e$ that
    is an AL-space as a Banach lattice. Then $(A_e,\|{\cdot
    }\|_e)$ is an AM-space, but at the same time is closed, so
    that $(A_e,\|{\cdot }\|_e)$ is also isomorphic to an
    AL-space. It follows that $A_e$ must be
    finite-dimensional.
    In fact, it will follow from our study of
    spectra that, under the assumption $\|e\|=1$, $\|{\cdot
    }\|=\|{\cdot }\|_e$ (see \cref{thm:bla_ideal_complex}). This
    implies $A_e=\R$. Without the assumption $\|e\|=1$, however, $A_e$
    need not be one-dimensional: just take $\el 1^{n}$ with the
    coordinate-wise order and product.
\end{example}

\subsection{Band projections and order idempotents}\label{sec:bpoi}

Band projections play a prominent role in the theory of Banach
lattices. These operators are characterized among
regular operators by \cref{thm:regop_bpchar}. The goal of this section
is to define elements in a general Banach lattice algebra that play
the same role as band projections do in spaces of regular operators. When
an identity is present, this generalization is straightforward
from \cref{thm:regop_bpchar}, and has already been
considered by E.\ A.\ Alekhno in the context of ordered Banach
algebras (see \cite{alekhno2018,alekhno2012}).

\begin{defn}
    Let $A$ be a Banach lattice algebra with identity $e$. We say
    that $p \in A$ is \emph{order idempotent} if $p^2=p$ and $0\le
    p\le e$. We denote by $OI(A)$ the set of order idempotents in $A$.
\end{defn}

What follows is the generalization of \cref{thm:regop_boole}. This
result appears, with a different proof, in \cite[Corollary
2.2]{alekhno2012}.

\begin{prop}
    Let $A$ be a Banach lattice algebra with identity $e$. The set of
    order idempotents $OI(A)$ forms a Boolean algebra with operations
    \[
    \overline{p}=e-p,\ p\vee q=p+q-pq,\ p\wedge q=pq,\quad \text{for } p,q \in OI(A),
    \]
    with zero as the minimum and the identity $e$ as the maximum. Moreover,
    suprema and infima coincide with those in $A$.
\end{prop}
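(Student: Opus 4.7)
The plan is to push the entire problem into the ideal $A_e$ generated by the identity and then exploit the isomorphism with a $C(K)$ space provided by \cref{thm:bla_ideal}. Observe first that any order idempotent $p$ satisfies $0 \le p \le e$, so $p \in A_e$ and thus $OI(A) \subseteq A_e$. By \cref{thm:bla_ideal}(i), there is a lattice and algebra isomorphism $\Phi\colon A_e \to C(K)$ sending $e$ to the constant function $1$, for some compact Hausdorff space $K$.

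Next I would identify the image of $OI(A)$ under $\Phi$. An element $f \in C(K)$ satisfies $f^2 = f$ and $0 \le f \le 1$ if and only if $f(t) \in \{0,1\}$ for every $t \in K$; by continuity of $f$ this happens exactly when $f = \chi_U$ for some clopen $U \subseteq K$. Hence $\Phi$ restricts to a bijection between $OI(A)$ and $\Clop(K)$, which is a classical Boolean algebra under union, intersection, and complement.

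I would then translate the Boolean structure back through $\Phi^{-1}$. The pointwise identities $\chi_U \chi_V = \chi_{U \cap V}$, $\chi_U + \chi_V - \chi_U\chi_V = \chi_{U \cup V}$, and $1 - \chi_U = \chi_{K \setminus U}$ hold in $C(K)$; because $\Phi$ is simultaneously an algebra and a lattice isomorphism with $\Phi(e)=1$, pulling these back yields $p \wedge q = pq$, $p \vee q = p + q - pq$, and $\overline{p} = e - p$, with the zero element and $e$ as minimum and maximum. Moreover, $\Phi$ being a lattice isomorphism, the lattice operations of $\Clop(K)$ agree with the pointwise lattice operations of $C(K)$ restricted to characteristic functions, so the Boolean supremum $p \vee q$ coincides with the lattice supremum of $p$ and $q$ computed inside $A_e$; since $A_e$ is an (order) ideal and hence a sublattice of $A$, this supremum also coincides with the one in $A$, and similarly for infima.

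The only mildly delicate point is the continuity argument showing that $f^2 = f$ forces $f$ to be the characteristic function of a clopen set; everything else is a mechanical transfer along $\Phi$. Because \cref{thm:bla_ideal}(i) already packages the heavy lifting, I expect no real obstacle beyond carefully recording how each Boolean operation corresponds under the isomorphism.
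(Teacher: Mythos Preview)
Your proposal is correct and follows essentially the same approach as the paper: both push $OI(A)$ into $A_e$, invoke \cref{thm:bla_ideal} to represent $A_e$ as $C(K)$, identify order idempotents with characteristic functions of clopen sets, and read off the Boolean structure from there. Your write-up is a bit more explicit than the paper's (in particular, you spell out why the suprema computed in $A_e$ agree with those in $A$), but the strategy is identical.
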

\begin{proof}
    By definition, $OI(A)$ is contained in $A_e$. Represent
    $A_e$ as a $C(K)$ space using \cref{thm:bla_ideal}. Then $p \in
    OI(A)$, seen
    as a continuous function in $K$, satisfies $p^2=p$; that is, $p$
    is a characteristic function of a clopen set in $K$. Conversely, any
    characteristic function of a clopen set is an order idempotent in
    $C(K)$, and therefore in $A$. This exhibits an identification of
    $OI(A)$ with $\{\, \chi _C : C\subseteq K\text{ clopen} \, \}.$
    The result now clearly follows.
\end{proof}

\begin{prop}
    In a Banach lattice algebra with identity,
    the set of order idempotents is norm and order closed.
\end{prop}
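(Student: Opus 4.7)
The plan is to split the statement into its two assertions and handle them separately, norm closedness first.

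For norm closedness, I would rely only on the continuity of the Banach algebra operations and the closedness of the positive cone. If $(p_n) \subset OI(A)$ converges in norm to $p$, then $0 \le p \le e$ since the cone is norm closed, and the joint continuity of multiplication gives $p_n^2 \to p^2$; combined with $p_n^2 = p_n \to p$, this forces $p^2 = p$.

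For order closedness the strategy is more subtle. First I would reduce to increasing nets, using that the complement map $p \mapsto e - p$ is an involution of $OI(A)$ by the preceding proposition, so closure under suprema of increasing nets implies closure under infima of decreasing nets. Given an increasing net $(p_\gamma) \subset OI(A)$ admitting a supremum $p$ in $A$, the bounds $0 \le p_\gamma \le e$ pass to $p$, so in particular $p \in A_e$. The key input I would extract from \cref{thm:bla_ideal}(ii) is that $L_a$ is band preserving for every $a \in A_e$; together with \cref{thm:central_orto_band} this promotes $L_p$ to a central operator on $A$, which is order continuous by the properties of the center recalled in \cref{sec:center}.

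With order continuity in hand, $p^2 = p$ will follow from a squeeze. For each $\gamma$, the inequalities $p \ge p_\gamma \ge 0$ and $p \le e$ give
\[
p_\gamma = p_\gamma^2 \le p p_\gamma \le e p_\gamma = p_\gamma,
\]
so $p p_\gamma = p_\gamma$. Passing to the supremum and invoking order continuity of $L_p$ yields
\[
p = \sup_\gamma p_\gamma = \sup_\gamma p p_\gamma = p \sup_\gamma p_\gamma = p^2,
\]
so $p \in OI(A)$.

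The main obstacle I anticipate is the order continuity of $L_p$: without it one cannot commute the supremum past the multiplication, and this is not a property of multiplication in a general Banach lattice algebra. The whole argument hinges on the fact that $p$ lands in $A_e$, which is precisely what upgrades $L_p$ to a central operator on all of $A$ through the combination of \cref{thm:bla_ideal,thm:central_orto_band}.
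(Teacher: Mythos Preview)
Your norm-closedness argument matches the paper's. For order closedness the paper takes a shorter route and treats an \emph{arbitrary} order-convergent net $(p_\alpha)\to p$ directly: since all the elements involved lie in $A_e$, which is commutative by the $C(K)$ representation in \cref{thm:bla_ideal}, one has
\[
|p^2-p_\alpha^2|\le (p+p_\alpha)\,|p-p_\alpha|\le 2\,|p-p_\alpha|,
\]
so $p_\alpha^2\to p^2$ in order, and then $p_\alpha^2=p_\alpha\to p$ forces $p^2=p$ by uniqueness of order limits. No appeal to order continuity of any multiplication operator is needed.

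Your route through order continuity of $L_p$ is a legitimate alternative idea, but as written there is a gap. Order closedness here means closure under limits of \emph{all} order-convergent nets, not only monotone ones; your ``reduction'' only passes from decreasing to increasing via $p\mapsto e-p$, and $OI(A)$ is not solid, so the usual monotone reduction valid for ideals does not apply. Your squeeze $pp_\gamma=p_\gamma$ genuinely needs $p_\gamma\le p$. The repair is easy with your own tool: for an arbitrary order-convergent net, $|pp_\alpha-p_\alpha^2|\le |p-p_\alpha|\,p_\alpha\le |p-p_\alpha|$ gives $pp_\alpha\to p$ in order, while order continuity of $L_p$ gives $pp_\alpha\to p^2$, hence $p^2=p$. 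One smaller point: you do not need the detour through \cref{thm:bla_ideal}(ii) and \cref{thm:central_orto_band} to see that $L_p$ is central --- the inequality $0\le p\le e$ already yields $0\le L_p\le L_e=I_A$ directly.
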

\begin{proof}
    Let $A$ be a Banach lattice algebra with identity $e$. Let $(p_n
    )$ be a sequence of order idempotents converging to $p$ in
    norm. Then $p_n ^2=p_n $ and $0\le p_n \le e$ for
    each $n \in \N $, and since the product is continuous and the
    positive cone is closed, it follows that $p$ is an order
    idempotent. This shows that the set of order idempotents is norm
    closed.

    To show that it is also order closed, let $(p_\alpha )$ be a net
    of order idempotents converging to $p$ in order. The product need
    not be order continuous, so one has to proceed with more care now.
    Since the positive cone is always order closed, $0\le p\le
    e$ follows from $0\le p_\alpha \le e$ for each $\alpha $. By
    \cref{thm:bla_ideal}, $p$ and $p_\alpha $ commute, so
    \[
    |p^2-p_\alpha^2 |\le (p+p_\alpha )|p-p_\alpha |\le 2|p-p_\alpha |,
    \]
    where the first inequality follows from the submultiplicativity of
    the modulus. This implies that $p_\alpha ^2$ converges in order to
    $p^2$. Since also $p_\alpha ^2=p_\alpha $ for every $\alpha
    $, and the order limit is unique, $p^2=p$. So $p$ is an order
    idempotent, and this shows that the set of order idempotents is
    also order closed.
\end{proof}

The notion of order idempotent seems to abstract some of the
most important properties of band projections, but it relies on the
existence of an identity element. Of course, this poses no limitation in
spaces of regular operators, where the identity operator is always
present. However, this is not the case for general Banach lattice algebras. Can we
somehow still abstract the
definition and properties of band projections to Banach
lattice algebras without identity?

Faced with a similar problem, K.\ Vala gives in \cite{vala1967} a notion of compact and finite rank elements in
arbitrary normed algebras as follows:
for a normed algebra $A$, he defines an element $a$ in $A$ to be
\emph{compact} (resp.\ \emph{of finite rank}) if the operator
$L_aR_a$ is compact (resp.\ of finite rank).  In view of
\cref{cor:bp_multop}, it makes sense to take Vala's approach and set
the following definition.

\begin{defn}
    Let $A$ be a Banach lattice algebra. An element $a \in A_+$ is
    called a
    \emph{band projection} if $L_aR_a\colon A\to A$ is a band
    projection. The set of band projections in $A$ is denoted
    \[
    BP(A)=\{\, a \in A_+ : L_aR_a\text{ is a band projection} \, \}.
    \]
\end{defn}

It is also convenient to have the following notions.

\begin{defn}
    Let $A$ be a Banach lattice algebra. An element $a \in A_+$ is
    called a \emph{left band projection} (resp.\ \emph{right band
    projection}) if $L_a$ (resp.\ $R_a$) is a band projection. The
    sets of left and right band projections in $A$ are denoted
    \[
    BP_l(A)=\{\, a \in A_+ : L_a\text{ is a band projection} \, \},
    \]
    \[
    BP_r(A)=\{\, a \in A_+ : R_a\text{ is a band projection} \, \}.
    \]
\end{defn}

\begin{rem}
    Let $A$ be a Banach lattice algebra. According to
    \cref{thm:regop_bpchar},
    $a \in A_+$ is a band projection if and only if
    \[
    0\le axa\le x\text{ and }a^2xa^2=axa\quad\text{for all }x \in A_+.
    \]
    Similarly, $a \in A_+$ is a left band projection if and only if
    \[
    0\le ax\le x\text{ and }a^2x=ax\quad\text{for all }x \in A_+,
    \]
    and a right band projection if and only if
    \[
    0\le xa\le x\text{ and }xa^2=xa\quad\text{for all }x \in A_+.
    \]
    In particular, the sets $BP(A)$, $BP_l(A)$ and $BP_r(A)$ are all
    norm closed.
\end{rem}

If the multiplication in $A$ is jointly order continuous, it follows
from previous remark that $BP(A)$, $BP_l(A)$ and $BP_r(A)$ are
order closed. In general, however, this will not be true, as the
following example shows.

\begin{example}\label{ex:pathologicalc}
    Consider the Banach lattice $c$ of convergent real sequences, with
    the product defined by
    \[
    x\star y=\big(\lim_{n\to \infty }x_n\big)\big(\lim_{n \to
        \infty}y_n\big)e \quad
    \text{where }e=(1,1,1,\ldots ).
    \]
    Associativity of this product follows from
    \[
        \lim_{n\to \infty } (x\star y)_n=\big(\lim_{n\to \infty }
        x_n\big)\big(\lim_{n \to \infty } y_n\big),
    \]
    bilinearity is clear, and it is not difficult to check that, under
    the usual supremum norm, $c$ with $\star$ is a Banach
    algebra.
    Moreover, the product of positive elements is positive, so $c$ is
    actually a Banach lattice algebra (without identity).

    It is clear that $c_0=BP(c)=BP_l(c)=BP_r(c)$.
    In particular, $x_n=\sum_{i=1}^{n}e_i$, where $e_i$ denotes the $i$th
    coordinate vector, is a (left and right) band projection. Also,
    $x_n\uparrow e$. However, $e$ is not a (left or right) band projection, because
    for $x=e-e_1$ we have
    \[
    e\star x\star e=e\not\le x.
    \]
\end{example}

It is natural to ask what is the relation between the sets $BP(A)$,
$BP_l(A)$, $BP_r(A)$ and, in the case with identity, $OI(A)$. From the
definition, it should be clear that $BP_l(A)\cap BP_r(A)\subseteq
BP(A)$, and that, in the case with identity, $BP_l(A)=OI(A)=BP_r(A)$ (for $p
\in BP_l(A)$, just evaluate $L_p$ at the identity element, and similarly
for $p \in BP_r(A)$). In particular, $OI(A)=BP_l(A)\cap
BP_r(A)\subseteq BP(A)$. Is it true that $OI(A)=BP(A)$? This seems to
be the case at least for the classical examples of Banach lattice
algebras.

\begin{example}
    \begin{enumerate}
        \item Let $X$ be an order complete Banach lattice. If $P \in
            BP(\Lr X)$, by \cref{cor:bp_multop}, $P$ is a band
            projection, that is, $P \in OI(\Lr X)$.
        \item Let $f \in BP(C(K))$, for a certain compact Hausdorff
            space $K$, and let $\mathbb{1}$ be the constant one
            function in $C(K)$. Then
            $L_fR_f(\mathbb{1})=(L_fR_f)^2(\mathbb{1})$ implies
            $f^2=f^{4}$, so $f$ must be the characteristic function of a
            clopen set. It follows that $f$ is an order idempotent.
        \item Let $G$ be a locally compact group and consider the
            Banach lattice algebra $M(G)$. Since the identity $\delta _e$
            is an atom, if $\mu \in OI(M(G))$, inequality $0\le
            \mu \le \delta _e$ implies $\mu =\lambda \delta
            _e$ for some $\lambda \ge 0$. Then $\mu ^2=\mu $ (we
            denote $\mu ^2=\mu \ast \mu $) forces either
            $\lambda =1$ or $\lambda =0$. Hence
            $OI(M(G))=\{0,\delta _e\}$ is trivial.

            We are going to show that $BP(M(G))$ is also trivial. If
            $\mu \in BP(M(G))$, evaluating $L_\mu R_\mu $ at $\delta
            _e$ yields that $\mu ^2$ must be an order idempotent.
            That is, either $\mu ^2=0$ or $\mu ^2=\delta _e$. By direct
            computation
            \[
            \mu ^2(G)=\iint_{G\times G} \chi _G(st)\, d \mu (s) d \mu
            (t)=\int_{G}^{} \mu (s ^{-1}G)\, d \mu (s)=\mu (G)^2,
            \]
            so if $\mu ^2=0$, $\mu (G)^2=0$, and since $\mu\ge 0$,
            this implies $\mu =0$. Now suppose $\mu\neq 0$, that is,
            $\mu ^2=\delta _e$. We can decompose $\mu =\mu _d+\mu _c$,
            where $\mu _d\ge 0$ is discrete (i.e., is contained in the
            band generated by the atomic measures, which is precisely
            the closed linear span of said measures, because $M(G)$ is order
            continuous) and $\mu _c\ge 0$ is
            continuous (i.e., is contained in the disjoint complement).
            Since
            \[
                \mu _c^2(\{g\})=\int_{G}^{} \mu _c(\{s^{-1}g\})\, d
                \mu _c(s)=0\quad\text{for every }g \in G,
            \]
            the measure $\mu _c^2$ cannot have any discrete component.
            However,
            \[
            \delta _e=\mu ^2=\mu _d^2+\mu _c^2+\mu _d\ast\mu _c+\mu
            _c\ast \mu _d
            \]
            and since every measure involved is positive, we must have
            $\mu _c^2=0$. Equality $\mu _c(G)^2=\mu _c^2(G)=0$ implies
            $\mu _c=0$ and so $\mu $ is discrete. Write
            \[
            \mu =\sum_{g \in G}^{} \lambda _g \delta _g
            \]
            for some $\lambda _g \in \R_+$. By definition,
            \[
            \delta _e=\mu ^2=\sum_{g \in G}^{} \bigg( \sum_{h \in G}^{}\lambda
            _h \lambda _{h^{-1}g} \bigg) \delta _g,
            \]
            and since the $\lambda _h$ are positive, this can only
            happen if $\lambda _h=0$ for all $h\neq e$. Therefore $\mu
            =\lambda _e \delta _e$, and from $\mu ^{2}=\mu ^4$ it
            follows that $\lambda _e=1$. We have thus shown that if
            $\mu  \in BP(M(G))$ is different from $0$, it must be
            $\delta _e$, as wanted.

        \item A.\ W.\ Wickstead provided in \cite{wickstead2017_two} an
            explicit list of all two-dimensional Banach lattice
            algebras. In the Banach lattice algebras with identity
            from the list, one can check, by means of direct computation, that the order
            idempotents coincide with the band projections.
    \end{enumerate}
\end{example}

Contrary to this evidence, it is not true that in Banach lattice algebras with identity all
band projections are order idempotents, and an example already exists
in dimension 3.

\begin{example}\label{ex:oinotbp}
    Consider $\R^{2}$ with the usual lattice structure and any lattice
    norm, and let $\Lr{\R^2}$ be its space of
    regular operators. The space $\Lr{\R^2}$ is a Banach lattice
    algebra, which we may identify with $M_2(\R)$, the space of
    $2\times 2$ real matrices, where the
    order is defined entry-wise and the product is the composition
    (that is, the usual matrix product).
    Consider the set of upper triangular matrices
    \begin{align*}
        A&=\barespn\left\{
        \begin{pmatrix}
            1 & 0 \\ 0 & 0
        \end{pmatrix},
        \begin{pmatrix}
            0 & 1 \\ 0 & 0
        \end{pmatrix},
        \begin{pmatrix}
            0 & 0 \\ 0 & 1
        \end{pmatrix}
    \right\}\\
         &=\left\{\, \begin{pmatrix} a & b \\ 0 & c \end{pmatrix}
         :  \, a,b,c \in \R\right\}.
    \end{align*}
    Being spanned by disjoint elements, $A$ is a 3-dimensional
    sublattice of $M_2(\R)$. Moreover, one can easily check that the product of
    upper-triangular matrices is again upper-triangular. Hence, $A$ is a
    Banach lattice algebra with identity when equipped with the order,
    product, and norm inherited from $M_2(\R)$.

    Now consider $p=\begin{pmatrix} 0 & 1 \\ 0 & 0 \end{pmatrix} $.
    Clearly $p\ge 0$ and, for all $a,b,c \in \R$,
    \[
        L_pR_p \begin{pmatrix} a & b \\ 0 & c \end{pmatrix} =
        \begin{pmatrix} 0 & 0 \\ 0 & 0 \end{pmatrix}.
    \]
    Hence $L_pR_p=0$; in particular, $p \in BP(A)$. However, $p\not\le I_2$,
    so $p \not\in OI(A)$.

    Let us compute exactly $BP(A)$ and $OI(A)$. It is direct to check
    that
    \[
        OI(A)=\left\{\begin{pmatrix} 0 & 0 \\ 0 & 0 \end{pmatrix}
        ,\begin{pmatrix} 1&0\\0&0 \end{pmatrix} ,\begin{pmatrix}
    0&0\\0&1 \end{pmatrix} ,\begin{pmatrix} 1&0\\0&1 \end{pmatrix}
\right\}.
    \]
    Now let $\bar p = \begin{pmatrix} p_1&p_2\\0&p_3 \end{pmatrix} \in
    BP(A)$. For any $a,b,c \in \R_+$:
    \[
        \begin{pmatrix} p_1&p_2\\0&p_3 \end{pmatrix} \begin{pmatrix}
        a&b\\0&c \end{pmatrix} \begin{pmatrix} p_1&p_2\\0&p_3
        \end{pmatrix} =\begin{pmatrix}
    p_1^2a&p_1p_2a+p_1p_3b+p_2p_3c\\0&cp_3^2 \end{pmatrix} \le
    \begin{pmatrix} a&b\\0&c \end{pmatrix}.
    \]
    Setting $b=0$ we get the inequality in the upper right entry
    \[
    p_2(ap_1+cp_3)\le 0.
    \]
    Since $a$ and $c$ can take any positive value, and $p_1,p_2,p_3$
    are positive, either $p_2=0$
    or $p_1=p_3=0$.
    \begin{enumerate}
        \item If $p_2=0$, equality ${\bar p}^2={\bar p}^{4}$ implies
            $p_1^2=p_1^{4}$, so $p_1=0$ or $p_1=1$, and similarly
            $p_3=0$ or $p_3=1$. The only band projections in this case are the
            order idempotents.
        \item If $p_2\neq 0$, then $p_1=p_3=0$, and $\bar{p}$ is a
            band projection for any $p_2\ge 0$.
    \end{enumerate}
    Putting together these computations:
    \[
        BP(A)=OI(A)\cup \left\{ \begin{pmatrix} 0&p\\0&0 \end{pmatrix}  :
    p \in \R_+ \right\}.    \]
\end{example}

\begin{rem}
    This example already illustrates a difference between
    $BP(A)$ and $OI(A)$: while order idempotents commute by
    \cref{thm:bla_ideal}, band
    projections need not commute.
    After this observation, it is not surprising that $BP(A)$ need not be a Boolean algebra:
    in previous example, $BP(A)$ is not closed under taking suprema.

    By contrast, the elements of $BP_l(A)\cap BP_r(A)$ always commute. Indeed, let
    $a,b \in BP_l(A)\cap BP_r(A)$. Since $L_a, L_b, R_a, R_b$ are band
    projections, they commute, so $aba=L_aL_b(a)=L_bL_a(a)=ba^2$ and
    $aba=R_aR_b(a)=R_bR_a(a)=a^2b$, which gives
    $ab=a^2b=aba=ba^2=ba$.
\end{rem}

One could think that previous example is somewhat pathological,
because $p^2=0$ and $L_pR_p=0$. Maybe this is the only reason a band
projection is not an order idempotent. It is not, and a
counterexample is coming. But first an observation.

\begin{rem}
    Let $\{A_i\}_{i \in I}$ be a family of Banach lattice algebras
    indexed on a set $I$. Consider $A=\el p(A_i)$ for some $1\le p\le
    \infty $. We claim that
    \[
    BP(\el p(A_i))=\prod_{i \in I} BP(A_i).
    \]
    Indeed, a positive element $(p_i)_{i \in I}$ is a band
    projection in $\el p(A_i)$ if and only if for all
    $(x_i)_{i \in I} \in \el p(A_i)_+$:
    \[
    0\le (p_i)(x_i)(p_i)\le
    (x_i)\quad\text{and}\quad
        (p_i)(x_i)(p_i)=(p_i)^2(x_i)(p_i)^2.
    \]
    This is the same as saying that $0\le p_ix_ip_i\le x_i$ and $p_i
    x_i p_i=p_i^2x_ip_i^2$ whenever $x_i \in (A_i)_+$, for all
    $i\in I$. That is, $(p_i) \in BP(\el p(A_i))$ if and only if $p_i \in BP(A_i)$.

    Similarly, one can check that
    \[
    BP_l(\el p(A_i))=\prod_{i \in I} BP_l(A_i)\quad\text{and}\quad
    BP_r(\el p(A_i))=\prod_{i \in I} BP_r(A_i).
    \]
    If each $A_i$ has an identity, then also
    \[
    OI(\el \infty (A_i))=\prod_{i \in I} OI(A_i).
    \]
\end{rem}

\begin{example}\label{ex:oinotbp2}
    Let $A$ be the Banach lattice algebra of \cref{ex:oinotbp}, and
    consider $\el \infty (A,A)$.
    By previous remark, the element
    \[
        q=\left( \begin{pmatrix} 1&0\\0&0 \end{pmatrix} ,\begin{pmatrix}
    0&1\\0&0\end{pmatrix}  \right)
    \]
    is a band projection that is not order idempotent. In this case,
    $q^2\neq 0$ and $L_qR_q\neq 0$.
\end{example}

At this point, it is clear that the relation between order idempotents
and band projections in Banach lattice algebras with identity is not
trivial. The forthcoming results aim at a better understanding of it.

\begin{prop}
    Let $A$ be a Banach lattice algebra with identity $e$, and let $p \in
    BP(A)$. Then the following are equivalent:
    \begin{enumerate}
        \item $p \in OI(A)$,
        \item $p$ is idempotent,
        \item $p \in A_e$,
        \item $(\lambda e+p)^{-1}\ge 0$ for some $\lambda >\|p\|$.
    \end{enumerate}
\end{prop}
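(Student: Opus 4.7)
The plan is to close the cycle $\text{(i)} \Rightarrow \text{(ii)} \Rightarrow \text{(iii)} \Rightarrow \text{(iv)} \Rightarrow \text{(i)}$, keeping the standing hypothesis $p \in BP(A)$ in force throughout.

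The three starting implications are short. The implication $\text{(i)} \Rightarrow \text{(ii)}$ is immediate from the definition of order idempotent. For $\text{(ii)} \Rightarrow \text{(iii)}$, I would use that $L_p R_p$ is a band projection, hence $L_p R_p \leq I_A$; evaluating at $e$ gives $p = p^2 = L_p R_p(e) \leq e$, which places $p$ in $A_e$. For $\text{(iii)} \Rightarrow \text{(iv)}$, I would invoke the lattice and algebra isometry $A_e \cong C(K)$ provided by \cref{thm:bla_ideal}: under this identification $\lambda e + p$ becomes a continuous function bounded below by $\lambda > 0$, so it is invertible in $A_e$ with a positive inverse, and because $A_e$ is a unital subalgebra of $A$ the same element inverts $\lambda e + p$ in $A$.

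The core of the argument is $\text{(iv)} \Rightarrow \text{(i)}$. Setting $s = (\lambda e + p)^{-1} \geq 0$, the identity $(\lambda e + p) s = e$ reads $\lambda s + p s = e$, and since $p, s \geq 0$ we also have $p s \geq 0$; this forces $\lambda s \leq e$, i.e.\ $s \leq e/\lambda$, so $s \in A_e$. Now the inverse-closedness of $A_e$ in $A$ (from \cref{thm:bla_ideal}) yields $s^{-1} = \lambda e + p \in A_e$, and therefore $p \in A_e$. To upgrade this to $p \in OI(A)$, I would use that $p \in BP(A)$: the operator $L_p R_p$ is idempotent, and evaluating $(L_p R_p)^2(e) = L_p R_p(e)$ gives $p^4 = p^2$. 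Inside the representation $A_e \cong C(K)$, the relation $p^2 = p^4$ together with $p \geq 0$ admits only the pointwise values $0$ and $1$, so $p$ is the characteristic function of a clopen subset of $K$; in particular $p^2 = p$ and $0 \leq p \leq e$, which is exactly $p \in OI(A)$.

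The main obstacle I anticipate is the step $\text{(iv)} \Rightarrow \text{(iii)}$. The naive approach of expanding $(\lambda e + p)^{-1}$ as a Neumann series and trying to extract $p \in A_e$ from term-wise positivity does not seem to work cleanly. The key observation is to re-read $\lambda s + p s = e$ as an order bound for $s$ rather than one involving $p$ directly: this sends $s$, not $p$, into $A_e$, and only afterwards does the inverse-closedness of $A_e$ transport the conclusion back to $p$.
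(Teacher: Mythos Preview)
Your proof is correct. The implications $\text{(i)}\Rightarrow\text{(ii)}\Rightarrow\text{(iii)}$ and the passage from $p\in A_e$ together with $p^4=p^2$ to $p\in OI(A)$ match the paper's argument essentially verbatim.

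Where you genuinely diverge is in the link between (iii) and (iv). For $\text{(iii)}\Rightarrow\text{(iv)}$ the paper computes, using $p^{2n}=p^2$ and $p^{2n+1}=p^3$, the closed form
\[
\lambda(\lambda e+p)^{-1}=\Bigl(e-\tfrac{p}{\lambda}\Bigr)\Bigl(e+\tfrac{p^2}{\lambda^2-1}\Bigr)
\]
via the Neumann series, and reads off positivity from $p\le \|p\|_e e$; you instead just observe in the $C(K)$ picture that $\lambda+p(t)\ge\lambda>0$, which is shorter. For $\text{(iv)}\Rightarrow\text{(iii)}$ the paper argues that $(\lambda e+p)(\lambda e-p)=\lambda^2 e-p^2\ge 0$ (using $p^2\le e$ and $\lambda\ge 1$, the latter justified via the spectral radius of $p$), and then multiplies on the left by the positive inverse to get $p\le\lambda e$. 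Your route---bounding $s=(\lambda e+p)^{-1}$ by $e/\lambda$ from $\lambda s+ps=e$, then invoking inverse-closedness of $A_e$ to pull $p$ back---is different and arguably cleaner: it never needs the auxiliary inequality $\lambda\ge 1$, and in fact uses only $\lambda>0$ rather than $\lambda>\|p\|$. The trade-off is that the paper's computation yields an explicit formula for the inverse, while your argument leans on the structural fact that $A_e$ is inverse closed.
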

\begin{proof}
    Condition $(i)$ clearly implies $(ii)$. If $p$ is idempotent,
    $L_pR_p(e)=p^2=p$, and since $0\le L_pR_p(e)\le e$, we have $p \in
    A_e$. So $(ii)$ implies $(iii)$. If $p \in A_e$, then, using that
    $A_e$ is a subalgebra and sublattice of $A$, we can consider the
    restriction $L_pR_p\colon A_e\to A_e$. This operator
    is a band projection on $A_e$. Represent $A_e$ as a
    $C(K)$, for a certain compact Hausdorff space $K$. Since
    $BP(C(K))=OI(C(K))$, it follows that $p \in OI(A_e)$. But the
    order and product in $A_e$ are the same as those in $A$, thus $p \in OI(A)$.
    This shows that $(iii)$ implies $(i)$.

    It only remains to check the equivalence between $(iii)$ and $(iv)$.
    Since $p \in BP(A)$, we have $p^2=L_pR_p(e)=(L_pR_p)^2(e)=p^{4}$. Inductively, $p^2=p^{2n}$ and $p^3=p^{2n+1}$ for all
    $n\ge 1$. Using this fact and the Neumann series, for every
    $\lambda >\max\{1,\|p\|\}$,
    \begin{align*}
        \lambda (\lambda e+p)^{-1}=(e+p/\lambda
        )^{-1}&=\sum_{n=0}^{\infty}\frac{(-p)^{n}}{\lambda ^{n}}\\
              &=e-\frac{p}{\lambda }+\frac{p^2}{\lambda
              ^2}\sum_{n=0}^{\infty}\frac{1}{\lambda
              ^{2n}}-\frac{p^3}{\lambda
          ^3}\sum_{n=0}^{\infty}\frac{1}{\lambda ^{2n}}\\
              &=e-\frac{p}{\lambda }+\frac{p^2}{\lambda
              ^2-1}-\frac{p^3}{\lambda (\lambda ^2-1)}\\
              &=\left( e-\frac{p}{\lambda} \right) \left(
              e+\frac{p^2}{\lambda ^2-1} \right)
    \end{align*}
    Suppose that $p \in A_e$. Then $p\le \|p\|_e e$ and, by the
    previous computation, $(\|p\|_e+2)e+p$ is invertible with positive
    inverse. This shows that $(iii)$ implies $(iv)$. Conversely, if
    ${(\lambda e+p)^{-1}}\ge 0$ for some $\lambda >\|p\|$, either $p=0$,
    in which case the result is trivial, or
    $p\neq 0$, in which case $\lambda >\|p\|_e\ge 1$\footnote{Due to the fact that the spectral
    radius of $p$ is $1$, see \cref{sec:bla_spectra}.} and then
    \[
        (\lambda e+p)(\lambda e-p)=\lambda ^2e-p^2\ge 0.
    \]
    Multiplying by $(\lambda e+p)^{-1}$ on the left
    \[
    \lambda e-p=(\lambda e+p)^{-1}(\lambda ^2e-p^2)\ge 0
    \]
    because the product of positive elements is positive. Hence $p \in
    A_e$.
\end{proof}

In order to delve deeper into the relation between band projections and order
idempotents, we need to introduce spectra in our study.

\subsection{Complexification and spectra}\label{sec:bla_spectra}

To talk about spectra and see their relation with band
projections and order idempotents, we need to work over the complex
field. So first we need to ``complexify'' some of the notions introduced
so far.

Let $A$ be a Banach lattice algebra. Applying the procedure for the
complexification of Banach lattices, as exposed in
\cref{sec:regop_spectra}, one
obtains the complex Banach lattice $A_\C$ with norm $\|z\|_\C=\| |z|
\|$. With the natural product
\[
    (x_1+ix_2)(y_1+iy_2)=(x_1y_1-x_2y_2)+i(x_1y_2+x_2y_1)\quad\text{where
    }x_1,x_2,y_1,y_2 \in A,
\]
the space $A_\C$ becomes a complex algebra, that has an identity precisely
when $A$ does. This product is compatible
with the complex lattice structure in the sense that $|z_1 z_2|\le
|z_1| |z_2|$, for all $z_1,z_2 \in A_\C$. This fact is not trivial,
and a proof may be found in \cite{huijsmans1985}. The submultiplicativity of the
norm on $A$ immediately implies $\|z_1 z_2\|_\C\le \|z_1\|_\C
\|z_2\|_\C$ for $z_1,z_2 \in A_\C$. Hence $A_\C$ is also a complex
Banach algebra with norm $\|{\cdot }\|_\C$. The space $A_\C$ with its
structures of complex Banach lattice and complex Banach algebra is
called a \emph{complex Banach lattice algebra}. When $A$ has an identity, we
emphasize it by saying that $A_\C$ is a \emph{complex Banach
lattice algebra with identity}.

\begin{example}
    \begin{enumerate}
        \item If $K$ is a compact Hausdorff space, the
            complexification of $C(K)$ may be identified with
            $C(K,\C)$, the space of continuous complex-valued
            functions on~$K$.
        \item If $X$ is an order complete Banach lattice, the
            complexification of $\Lr X$ may be identified with the
            space of regular operators over $X_\C$ (see
            \cref{sec:regop_spectra}).
    \end{enumerate}
\end{example}

Next we want to present a result analogous to \cref{thm:bla_ideal} in
complex Banach lattice algebras. If $X$ is a Banach lattice, a
subspace $I\subseteq X_\C$ is called an \emph{ideal} if $x \in I$ and
$|y|\le |x|$ imply $y \in I$.
It is not difficult to check that $I$ is the complexification of the
ideal $I\cap X$ of $X$, and that the intersection of all the ideals
containing $z \in X_\C$ is given by
\[
    (I_z)_{\C}=\{\, w \in X_\C : |w|\le \lambda |z|\text{ for some }\lambda \in
\R_+ \, \}.
\]
This is called the \emph{ideal generated by $z$}.

An ideal $B\subseteq X_\C$ is called a
\emph{band} (resp.\ \emph{projection band}) if $B\cap X$ is a band
(resp.\ projection band) in $X$. Again, one can check that an ideal
$B\subseteq X$ is a projection band if and only if $X_\C=B\oplus
B^{d}$, where
\[
B^{d}=\{\, z \in X_\C : |w|\wedge |z|=0 \text{ for all }w \in B \, \}.
\]

Finally, some common notation: for $z \in A_\C$ denote its spectrum by
$\sigma (z)$ and its spectral radius by $r(z)$.

\begin{thm}\label{thm:bla_ideal_complex}
    Let $A_\C$ be a complex Banach lattice algebra with identity $e$ and let
    $(A_\C)_e$ be the ideal generated by $e$ in $A_\C$. Then
    $(A_\C)_e=(A_e)_\C$, i.e., $(A_\C)_e$ is the complexification of
    the Banach lattice algebra $A_e$. Moreover:
    \begin{enumerate}
        \item The space $(A_\C)_e$ can
            be isometrically identified with $C(K,\C)$, for some compact Hausdorff $K$.
        \item The ideal $(A_\C)_e$ is a projection band in $A_\C$.
        \item The space $(A_\C)_e$ is an inverse closed subalgebra of $A_\C$.
    \end{enumerate}
\end{thm}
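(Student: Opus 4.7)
The strategy is to reduce the complex statement to the real version in \cref{thm:bla_ideal} via the complexification machinery outlined just before the theorem.

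First, I verify the identification $(A_\C)_e = (A_e)_\C$. From the description of the ideal generated by an element quoted in the excerpt, $(A_\C)_e = \{w \in A_\C : |w| \le \lambda e \text{ for some } \lambda \in \R_+\}$. For $w = x + iy \in A_\C$, the condition $|w| \le \lambda e$ is equivalent to $|w| \in A_e$, and since $|x|, |y| \le |w|$ and $A_e$ is an order ideal of $A$, this is in turn equivalent to $x, y \in A_e$, i.e., $w \in (A_e)_\C$.

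With this identification, assertions (i) and (ii) fall out of the real theorem almost for free. For (i), complexifying the lattice and algebra isometry $A_e \cong C(K)$ of \cref{thm:bla_ideal}(i) yields an isometric complex algebra and lattice isomorphism $(A_e)_\C \cong C(K,\C)$, where $C(K,\C)$ is the standard complex model for the complexification of $C(K)$. For (ii), by the definition of projection bands in complex Banach lattices recalled in the excerpt, it suffices to check that $(A_\C)_e \cap A = A_e$ is a projection band in $A$, which is \cref{thm:bla_ideal}(ii).

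Assertion (iii) requires genuine work, and I would mimic the proof of \cref{thm:bla_ideal}(iii). Suppose $z \in (A_\C)_e$ has an inverse $z^{-1} \in A_\C$. Using the projection band structure from (ii), decompose $z^{-1} = z_1 + z_2$ with $z_1 \in (A_\C)_e$ and $z_2 \in ((A_\C)_e)^d$. Since $(A_\C)_e = (A_e)_\C$ is a subalgebra (by complexifying \cref{thm:bla_ideal}(i)), $zz_1 \in (A_\C)_e$; combined with $e = zz_1 + zz_2$, uniqueness of the band decomposition forces $zz_2 = 0$, and hence $z_2 = z^{-1}\cdot 0 = 0$, \emph{provided} we can show $zz_2 \in ((A_\C)_e)^d$. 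For this, observe that since $(A_\C)_e$ is generated by $e$, the disjoint complement admits the simple description $\{w \in A_\C : |w| \wedge e = 0\}$. Invoking the submultiplicativity of the modulus $|z_1 z_2| \le |z_1||z_2|$ in a complex Banach lattice algebra (the nontrivial fact from \cite{huijsmans1985}) together with $|z| \le \|z\|_e e$, one obtains
\[
|zz_2| \wedge e \le |z||z_2| \wedge e \le \|z\|_e |z_2| \wedge e = 0,
\]
where the final equality uses that $|z_2|$ is disjoint from $e$ and disjointness is preserved by positive scalar multiples. The main obstacle is precisely the availability of the complex submultiplicativity of the modulus; once that is in hand, the argument closes exactly as in the real case.
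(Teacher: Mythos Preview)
Your arguments for $(A_\C)_e=(A_e)_\C$, for (ii), and for (iii) are correct and match the paper's approach (the paper explicitly says that for (iii) ``exactly the same proof'' as in \cref{thm:bla_ideal} works, which is what you carry out using the complex submultiplicativity $|z_1z_2|\le|z_1||z_2|$).

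There is, however, a genuine gap in your treatment of (i). The isometry $A_e\cong C(K)$ in \cref{thm:bla_ideal}(i) is taken with respect to the ideal norm $\|\cdot\|_e$, \emph{not} the ambient norm $\|\cdot\|$ of $A$ (this is precisely the point of the footnote attached to \cref{thm:bla_ideal}). Consequently, complexifying that isometry only gives you $(A_e)_\C\cong C(K,\C)$ isometrically for the norm $\|\cdot\|_{e,\C}$. The statement of (i), and its use elsewhere in the paper (e.g.\ \cref{ex:ALalg}), require the isometry with respect to the norm $\|\cdot\|_\C$ inherited from $A_\C$. You have not shown that $\|\cdot\|_{e,\C}=\|\cdot\|_\C$ on $(A_\C)_e$, and this is the part of the proof that needs genuine work.

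The paper closes this gap with a spectral argument: since $(A_\C)_e$ is inverse closed in $A_\C$ (this is where (iii) is actually used before (i) is completed), the spectrum of $z\in(A_\C)_e$ computed in $(A_\C)_e$ agrees with that computed in $A_\C$. Viewing $z$ as a continuous function on $K$, its spectrum is its range, so $\|z\|_e=r(z)\le\|z\|$; and the assumption $\|e\|=1$ gives $\|z\|\le\|\,|z|\,\|\le\|z\|_e\|e\|=\|z\|_e$. Hence $\|z\|=\|z\|_e$, which is exactly the missing ingredient.
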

\begin{proof}
    Since $(A_\C)_e\cap A=A_e$, it follows that $(A_\C)_e=(A_e)_\C$.
    It is then a direct consequence of \cref{thm:bla_ideal} that $(A_\C)_e$, with the
    complexification of the norm $\|{\cdot }\|_e$, is isometric to
    $C(K,\C)$. To prove $(i)$ it only remains to check that the norm
    $\|{\cdot }\|_{e,\C}$ is equal to $\|{\cdot }\|_\C$.
    Being $A_e$ an inverse closed
    subalgebra, it is easy to check that the spectrum of an element in
    $A_e$ (i.e., its range when viewed as a function in $C(K,\C)$) is
    the same as its spectrum in $A_\C$. Therefore $\|z\|\ge r(z)=\|z\|_e$,
    and since we are assuming $\|e\|=1$ in our definition of Banach
    lattice algebras, $\|z\|\le \|z\|_e$. Joining both inequalities,
    $\|z\|=\|z\|_e=r(z)$ and $(i)$ is proved.
    From \cref{thm:bla_ideal} clearly follows $(ii)$, and exactly the
    same proof as in \cref{thm:bla_ideal} can be used to show $(iii)$.
\end{proof}

Previous result completes the generalization from
\cref{thm:center} to \cref{thm:bla_ideal}. It also completes
\cref{ex:ALalg}. Some remarks regarding the spectrum of order
idempotents and band projections are in order.

\begin{rem}
    \begin{enumerate}
        \item Let $p$ be an order idempotent. When we represent
            $(A_\C)_e$ as a space of continuous functions, $p$ is the
            characteristic function of a clopen set. As such,
            $\sigma (p)=\{0,1\}$ whenever $p\neq 0,e$. By previous
            theorem, $\|p\|=r(p)=1$.
        \item If $p \in BP(A)$, either
            $p^2=0$, in which case $r(p)=0$ and $\sigma (p)=\{0\}$ (the complexification of
            \cref{ex:oinotbp} serves as an example of this), or $p^2\neq 0$, in which case
            $\|p\|=r(p)=\lim_n \|p^{2n}\|^{1/2n}=1$ (because $p^2$ is order
            idempotent, so $\|p^2\|=1$, and also $(p^2)^{n}=p^2$).
            In both cases, $\sigma
            (p^2)\subseteq \{0,1\}$ implies $\sigma (p)\subseteq \{-1,0,1\}$.
        \item For $p \in BP(A)$ to be an order idempotent, it is
            necessary that $\sigma (p)\subseteq \{0,1\}$. However, it is not
            sufficient: one can easily check that the element $q$ in
            \cref{ex:oinotbp2} satisfies $\sigma (q)=\{0,1\}$, yet it
            is not an order idempotent.
    \end{enumerate}
\end{rem}

The following is an example of a band projection with spectrum
$\{0,1,-1\}$.

\begin{example}
    In the Banach lattice algebra $M_3(\R)$ (see \cref{ex:oinotbp}), consider the 3-dimensional subspace:
    \begin{align*}
        A&=\barespn \left\{ \begin{pmatrix} 1&0&0\\0&0&0\\0&0&1
            \end{pmatrix} ,\begin{pmatrix} 0&0&1\\0&0&0\\1&0&0
        \end{pmatrix}, \begin{pmatrix} 0&0&0\\0&1&0\\0&0&0
\end{pmatrix}  \right\} \\
         &=\left\{ \begin{pmatrix} x&0&y\\0&z&0\\y&0&x \end{pmatrix}
         \,:\,x,y,z \in \R \right\}.
    \end{align*}
    Being generated by pairwise disjoint elements, $A$ is a
    sublattice. Moreover, the identities
    \[
    \begin{pmatrix} x&0&y\\0&z&0\\y&0&x \end{pmatrix}
    \begin{pmatrix} x'&0&y'\\0&z'&0\\y'&0&x' \end{pmatrix}=
    \begin{pmatrix} x x'+y y'&0&xy'+x'y\\0&z z'&0\\xy'+x'y&0&x x'+y y'
    \end{pmatrix},
    \]
    \[
    \begin{pmatrix} x&0&y\\0&z&0\\y&0&x \end{pmatrix}^{-1}=
    \frac{1}{x^2-y^2}\begin{pmatrix}
    x&0&-y\\0&z^{-1}(x^2-y^2)&0\\-y&0&x \end{pmatrix},\quad \text{when
    both }x\neq y\text{ and }
    z\neq 0,
    \]
    prove that $A$ is an inverse closed subalgebra, with identity because the
    identity matrix is contained in $A$.
    Therefore, $A$ is a Banach lattice algebra with the inherited product, order and
    norm. Its complexification consists in taking matrices with
    complex entries.

    Set
    \[
    p=\begin{pmatrix} 0&0&1\\0&0&0\\1&0&0 \end{pmatrix}.
    \]
    It is not difficult to check that $pap=a$ for every $a \in
    A_\C$.\footnote{Using elementary matrix theory: right multiplication by $p$
    swaps the first and third columns of $a \in A_\C$, while left
    multiplication swaps the first and third rows of $a$; that is,
    $pap$ is the reflection of $a$ with respect to both of its
    diagonals. The elements of $A$, however, are invariant under this
    operation.} Therefore $L_pR_p=I_{A_\C}$, and this implies $p \in
    BP(A)$. The spectrum of $p$ is the set of roots of its
    characteristic polynomial
    \[
    \det(p-\lambda I_3)=\begin{vmatrix} -\lambda &0&1\\0&-\lambda &0\\1&0&-\lambda
    \end{vmatrix}=-\lambda (\lambda +1)(\lambda -1),
    \]
    which is $\sigma_A (p)=\sigma (p)=\{-1,0,1\}$.
\end{example}

Next we want to provide a result that relates spectra with order
idempotents. We start with a simple observation.

\begin{rem}
    In $C(K)$ order idempotents are characterized by their spectrum.
    More precisely, $p \in OI(C(K))$ if and only if $\sigma
    (p)\subseteq \{0,1\}$.
\end{rem}

We also need the generalization of \cref{thm:swa} to arbitrary
complex Banach lattice algebras with identity.

\begin{cor}
    Let $A_\C$ be a complex Banach lattice algebra with identity. Let $a \in A_+$ be an
    invertible element with positive inverse. Then
    $a \in A_e$ if and only if $\sigma (a)\subseteq \R_+$.
\end{cor}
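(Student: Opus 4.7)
The plan is to apply the Schaefer--Wolff--Arendt theorem (\cref{thm:swa}) to the left multiplication operator $L_a\colon A_\C\to A_\C$. This is an instance of the main strategy of the paper, namely transferring properties of an element $a\in A$ into properties of the associated multiplication operator on $A_\C$.

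First I would verify that $L_a$ is a lattice isomorphism of the complex Banach lattice $A_\C$. Since $a\in A_+$, the operator $L_a$ is positive; since $a$ is invertible with positive inverse $a^{-1}$, $L_a$ is a linear bijection with positive inverse $L_{a^{-1}}$. A positive linear bijection whose inverse is also positive is an order isomorphism, and hence preserves arbitrary existing suprema. Consequently, for $z=x+iy\in A_\C$,
\[
|L_a z|=\sup_{0\le\phi\le 2\pi}a(x\cos\phi+y\sin\phi)=a\sup_{0\le\phi\le 2\pi}(x\cos\phi+y\sin\phi)=a|z|,
\]
so $L_a$ preserves modulus and is therefore a lattice isomorphism of $A_\C$.

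Next I would identify the spectra, showing that $\sigma(L_a)=\sigma(a)$. For each $\lambda\in\C$ one has $\lambda I_{A_\C}-L_a=L_{\lambda e-a}$, so it suffices to prove that $L_{\lambda e-a}$ is invertible as an operator on $A_\C$ if and only if $\lambda e-a$ is invertible in $A_\C$. The forward direction uses $L_{(\lambda e-a)^{-1}}$ as a two-sided inverse; for the converse, given $T=L_{\lambda e-a}^{-1}$, set $b=T(e)$ so that $(\lambda e-a)b=e$, and then use the injectivity of $L_{\lambda e-a}$ applied to $b(\lambda e-a)-e$ to conclude that $b(\lambda e-a)=e$, so $\lambda e-a$ is invertible.

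Finally I would establish the equivalence $a\in A_e\iff L_a\in\mathcal{Z}(A_\C)$. If $a\in A_e$, then $a\le\|a\|_e\,e$ and the modulus computation above gives $|L_a z|\le\|a\|_e|z|$ for all $z\in A_\C$, so $L_a$ is central. Conversely, if $L_a\in\mathcal{Z}(A_\C)$, then $|L_a z|\le\mu|z|$ for some $\mu>0$ and all $z\in A_\C$; evaluating at $z=e$ gives $a\le\mu e$, so $a\in A_e$. Combining the three ingredients through \cref{thm:swa} produces the chain
\[
a\in A_e\iff L_a\in\mathcal{Z}(A_\C)\iff\sigma(L_a)\subseteq\R_+\iff\sigma(a)\subseteq\R_+.
\]
The main subtlety is the verification that $L_a$ is a genuine lattice isomorphism of $A_\C$ (not merely a positive bijection), which is what allows the invocation of Schaefer--Wolff--Arendt; this ultimately rests on the existence of the positive inverse $a^{-1}$.
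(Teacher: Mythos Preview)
Your proof is correct and follows the same core strategy as the paper: apply the Schaefer--Wolff--Arendt theorem to a multiplication operator. The paper uses $R_a$ instead of $L_a$, proves only the inclusion $\sigma(R_a)\subseteq\sigma(a)$ (which suffices for the nontrivial implication), and handles the converse direction $a\in A_e\Rightarrow\sigma(a)\subseteq\R_+$ directly via the $C(K)$ representation of $A_e$ rather than through the full equivalence chain; your version is somewhat more symmetric and self-contained, but the essential idea is the same.
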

\begin{proof}
    Let $a \in A_+$ be an invertible element such that $a ^{-1}\in
    A_+$. Then $R_a\colon A\to A$, being a positive operator with
    positive inverse $R_{a^{-1}}$, is a lattice isomorphism. Note that
    \[
        (R_a-\lambda I_{A_\C})(x)=x(a-\lambda e)=R_{a-\lambda
        e}(x)\quad\text{for all }x \in A_\C,
    \]
    where $e$ denotes the identity element of $A_\C$. It follows that
    $\sigma (R_a)\subseteq \sigma (a)$, since $R_{a-\lambda e}$ is
    invertible whenever $a-\lambda e$ is.
    Suppose $\sigma (a)\subseteq \R_+$. Then $\sigma (R_a)\subseteq \R_+$ and
    $R_a$ is a lattice isomorphism. By \cref{thm:swa}, $R_a \in
    \mathcal{Z}(A)$, which readily implies $a \in A_e$.
    The converse is direct if we see $a \in A_e$ as a positive
    continuous function on a compact space.
\end{proof}

\begin{prop}
    Let $A_\C$ be a complex Banach lattice algebra with identity. Let $a
    \in A_+$ be an invertible element such that:
    \begin{enumerate}
        \item $a ^{-1}\in A_+$,
        \item $\sigma (a)\subseteq \{\lambda ,\lambda +1\}$ for some
            $\lambda \in \R_+$.
    \end{enumerate}
    Then $a-\lambda e \in OI(A)$, where $e$ is the identity of $A$.
\end{prop}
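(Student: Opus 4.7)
The plan is to reduce the problem to the representation of $A_e$ as a space of continuous functions. The previous corollary does almost all the work: since $a \in A_+$ is invertible with positive inverse and $\sigma(a)\subseteq\{\lambda,\lambda+1\}\subseteq \R_+$, we conclude $a \in A_e$, so $a-\lambda e \in A_e$ as well.

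Once we are inside $A_e$, I would invoke \cref{thm:bla_ideal_complex}(i) to identify $(A_\C)_e$ isometrically (as a lattice and algebra) with $C(K,\C)$ for some compact Hausdorff $K$, under which $e$ corresponds to the constant function $\mathbb{1}$. Let $\hat{a} \in C(K,\C)$ be the image of $a$; since $a$ is real and positive, $\hat{a}$ is a nonnegative continuous real-valued function on $K$. Because $A_e$ is an inverse closed subalgebra of $A_\C$ (by \cref{thm:bla_ideal_complex}(iii)), the spectrum of $a$ in $A_\C$ coincides with the spectrum of $\hat a$ in $C(K,\C)$, which is precisely its range $\hat a(K)$. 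Hypothesis $(ii)$ therefore gives
\[
    \hat a(K)=\sigma(a)\subseteq \{\lambda,\lambda+1\}.
\]

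The final step is immediate: the function $\hat{a}-\lambda\mathbb{1}$ takes only the values $0$ and $1$, so it is the characteristic function of the clopen set $\hat a^{-1}(\{\lambda+1\})\subseteq K$. In particular $0\le \hat a-\lambda \mathbb{1}\le \mathbb{1}$ and $(\hat a-\lambda \mathbb{1})^2=\hat a-\lambda \mathbb{1}$ in $C(K)$. Transporting this back to $A_e$ (and hence to $A$, since the order and product of $A_e$ are inherited from $A$), we obtain $0\le a-\lambda e\le e$ and $(a-\lambda e)^2=a-\lambda e$, i.e., $a-\lambda e \in OI(A)$.

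There is no real obstacle here: the proof is essentially the observation that once the previous corollary places $a$ inside $A_e$, the $C(K)$-representation turns the spectral hypothesis into a pointwise statement about a continuous function, from which the order-idempotent property is read off directly. The only subtlety worth being careful about is the identification of spectra in $A_\C$ versus in $(A_\C)_e$, which is where inverse closedness of $A_e$ is used.
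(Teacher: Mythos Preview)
Your proof is correct and follows the same route as the paper: use the previous corollary to place $a$ in $A_e$, then invoke the $C(K)$ representation to read off that $a-\lambda e$ has spectrum in $\{0,1\}$ and is therefore an order idempotent. You have simply spelled out in more detail what the paper compresses into a single sentence, including the point about inverse closedness guaranteeing that spectra in $A_\C$ and in $(A_\C)_e$ agree.
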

\begin{proof}
    Since $a$ has positive spectrum, and $a^{-1} \in A_+$, from
    previous corollary it follows
    that $a \in A_e$. Then $a-\lambda e \in A_e$, and its
    spectrum satisfies, by hypothesis, $\sigma (a-\lambda e)\subseteq
    \{0,1\}$. Since $A_e$ is essentially a space of continuous
    functions, it follows that $a-\lambda e$ is an order idempotent.
\end{proof}

\subsection{Inner band projections}\label{sec:bla_innerbp}

In this section the construction of inner band projections
explained
in \cref{sec:regop_innerbp} is extended to general Banach
lattice algebras. Let $A$ be an
order complete Banach lattice algebra and let $\{p_\lambda \}_{\lambda \in \Lambda
}\subseteq BP_l(A)\cap BP_r(A)$ be such that $p_\alpha p_\beta
=\delta _{\alpha \beta }p_\alpha $ for all $\alpha ,\beta \in \Lambda
$ (in particular, if $A$ has an identity,
we can take the $p_\lambda $ to be pairwise disjoint order
idempotents).
For $\Gamma \subseteq \Lambda \times \Lambda $ define the map
\begin{equation}\label{eq:innerbp}
\begin{array}{cccc}
P_\Gamma \colon& A_+ & \longrightarrow & A_+ \\
        & x & \longmapsto & \displaystyle\bigvee_{(\alpha ,\beta )\in \Gamma }
        p_\alpha xp_\beta  \\
\end{array}.
\end{equation}
Note that this is well-defined, because $p_\alpha xp_\beta
=L_{p_\alpha }R_{p_\beta }(x)\le x$ (recall that $L_{p_\alpha }R_{p_\beta }$
is a band projection on $A$), and $A$ is order complete. If we
do not assume that $A$ is order complete, the coming results work just
fine, as long as we ask for this supremum to exist. In particular, if
$\Gamma $ is finite, everything works out.

\begin{lem}
    Let $A$ be a Banach lattice algebra, and let $(p_\lambda
    )_{\lambda \in \Lambda }\subseteq BP_l(A)\cap BP_r(A)$ be such
    that $p_\alpha p_\beta =\delta _{\alpha \beta }p_\alpha $. If
    $\Phi \subseteq \Lambda \times \Lambda $ is finite, then
    \[
    \bigvee_{(\alpha ,\beta )\in \Phi } p_\alpha xp_\beta
    =\sum_{(\alpha ,\beta )\in \Phi } p_\alpha xp_\beta
    ,\quad\text{for all }x \in A_+.
    \]
\end{lem}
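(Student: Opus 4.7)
The plan is to show that the family $\{p_\alpha x p_\beta : (\alpha,\beta) \in \Phi\}$ consists of pairwise disjoint positive elements of $A$. Once that is done, the identity follows from the standard fact that $y \vee z = y+z$ whenever $y,z \in A_+$ satisfy $y \wedge z = 0$, applied inductively over the finitely many pairs in $\Phi$.

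For the pairwise disjointness, take two distinct pairs $(\alpha,\beta), (\alpha',\beta') \in \Phi$. Suppose first $\alpha \neq \alpha'$. Since $p_\alpha, p_{\alpha'} \in BP_l(A)$, the multiplication operators $L_{p_\alpha}$ and $L_{p_{\alpha'}}$ are band projections on $A$, and from $p_\alpha p_{\alpha'} = 0$ we obtain
\[
L_{p_\alpha} L_{p_{\alpha'}} = L_{p_\alpha p_{\alpha'}} = 0.
\]
By \cref{thm:regop_boole}, two band projections with zero product are disjoint as regular operators, so their ranges lie in complementary bands of $A$: more precisely, if $B = \range(L_{p_\alpha})$, then $\range(L_{p_{\alpha'}}) \subseteq B^d$. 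Now $p_\alpha x p_\beta = L_{p_\alpha}(xp_\beta) \in B$ and $p_{\alpha'} x p_{\beta'} = L_{p_{\alpha'}}(xp_{\beta'}) \in B^d$ are both positive, so they are disjoint.

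The case $\alpha = \alpha'$ and $\beta \neq \beta'$ is handled symmetrically, using the right multiplication operators $R_{p_\beta}, R_{p_{\beta'}}$ (which are band projections because $p_\beta, p_{\beta'} \in BP_r(A)$) and the identity $R_{p_{\beta'}} R_{p_\beta} = R_{p_\beta p_{\beta'}} = 0$.

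There is no real obstacle here: the proof is essentially a direct application of the key technique highlighted in the introduction, namely translating algebraic identities among the $p_\lambda$ into lattice-theoretic statements about their multiplication operators in $\Lr A$, where the familiar theory of disjoint band projections applies.
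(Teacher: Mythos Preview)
Your proof is correct and follows essentially the same approach as the paper: both establish pairwise disjointness of the elements $p_\alpha x p_\beta$ by exploiting that the relevant multiplication operators are band projections whose product vanishes, and then conclude by induction. The only cosmetic difference is that the paper works directly with the combined operators $L_{p_\alpha}R_{p_\beta}$ (noting $(L_{p_\alpha}R_{p_\beta})(L_{p_\gamma}R_{p_\delta})=0$ for distinct index pairs), whereas you split into cases and use either $L_{p_\alpha}$ or $R_{p_\beta}$ alone; the underlying idea is identical.
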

\begin{proof}
    For every $(\alpha ,\beta )\in \Phi $, $L_{p_\alpha }R_{p_\beta }$
    is a band projection; to ease the notation, denote it by
    $L_\alpha R_\beta $. Whenever $(\gamma ,\delta )\neq (\alpha
    ,\beta )$:
    \[
        (L_\alpha R_\beta )(L_\gamma R_\delta )(x)=p_\alpha p_\gamma x
        p_\delta p_\beta =0
    \]
    because either $p_\alpha p_\gamma =0$ or $p_\delta p_\beta =0$.
    Therefore $(L_\alpha R_\beta )(L_\gamma R_\delta )=0$. Being band
    projections, this implies that the bands determined by $L_\alpha
    R_\beta $ and $L_\gamma R_\delta $ are disjoint. Thus
    \[
    p_\alpha xp_\beta \wedge p_\gamma xp_\delta =(L_\alpha R_\beta
    )(x)\wedge (L_\gamma R_\delta )(x)=0,
    \]
    or, equivalently, $p_\alpha xp_\beta \vee p_\gamma xp_\delta
    =p_\alpha xp_\beta +p_\gamma xp_\delta $. Since this is true for
    any pair of indices, the result follows by induction.
\end{proof}

\begin{thm}\label{thm:innerbp}
    The map $P_\Gamma $, as defined in \eqref{eq:innerbp}, extends to
    a unique band projection on $A$.
\end{thm}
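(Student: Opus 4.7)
The plan is to pass from $A$ to its operators via the multiplication operators $L_{p_\alpha}R_{p_\beta}$. First, I would observe that for each $(\alpha,\beta)\in\Lambda\times\Lambda$, the operator $L_{p_\alpha}R_{p_\beta}$ is itself a band projection on $A$: by hypothesis $L_{p_\alpha}$ and $R_{p_\beta}$ are band projections, they commute (left and right multiplications always do), and the product of two commuting band projections is a band projection by \cref{thm:regop_boole}. The hypothesis $p_\alpha p_\beta=\delta_{\alpha\beta}p_\alpha$ gives $(L_{p_\alpha}R_{p_\beta})(L_{p_\gamma}R_{p_\delta})(x)=p_\alpha p_\gamma xp_\delta p_\beta=0$ whenever $(\alpha,\beta)\neq(\gamma,\delta)$, so this family of band projections on $A$ is pairwise disjoint.

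Next, using order completeness of $A$ (so that $\Lr A$ is order complete and the Boolean algebra of band projections on $A$ is complete), I would define
\[
\mathcal{P}_\Gamma := \bigvee_{(\alpha,\beta)\in\Gamma} L_{p_\alpha}R_{p_\beta},
\]
which is automatically a band projection on $A$. Let $B_\Gamma$ be the band corresponding to $\mathcal{P}_\Gamma$; it is the band generated by the ranges $B_{\alpha,\beta}:=\range(L_{p_\alpha}R_{p_\beta})$. To verify that $\mathcal{P}_\Gamma$ extends $P_\Gamma$, fix $x\in A_+$ and set $u=P_\Gamma(x)=\bigvee_{(\alpha,\beta)\in\Gamma} p_\alpha xp_\beta$, which exists in $A$ by order completeness since each summand is bounded above by $x$. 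Each $p_\alpha xp_\beta$ lies in $B_{\alpha,\beta}\subseteq B_\Gamma$, so $u\in B_\Gamma$. On the other hand, for every $(\alpha,\beta)\in\Gamma$,
\[
0\le x-u\le x-p_\alpha xp_\beta=(I_A-L_{p_\alpha}R_{p_\beta})(x)\in B_{\alpha,\beta}^d,
\]
so $x-u\in\bigcap_{(\alpha,\beta)\in\Gamma} B_{\alpha,\beta}^d=B_\Gamma^d$. Uniqueness of the band decomposition $x=u+(x-u)$ then forces $\mathcal{P}_\Gamma(x)=u=P_\Gamma(x)$.

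Uniqueness of the extension is automatic, since a band projection is regular and any regular operator on a Banach lattice is determined by its restriction to the positive cone. The main obstacle is precisely the identification $\mathcal{P}_\Gamma(x)=P_\Gamma(x)$ for $x\in A_+$: for finite $\Gamma$ it reduces to the preceding lemma, but for infinite $\Gamma$ one cannot simply invoke the Riesz--Kantorovich formulae for the supremum in $\Lr A$ and must instead argue via the band decomposition, as sketched above.
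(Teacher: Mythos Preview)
Your argument is correct, and it takes a genuinely different route from the paper's proof. The paper works entirely on $A_+$: it uses the preceding lemma to rewrite $P_\Gamma(x)$ as $\sup_{\Phi\in\Fin(\Gamma)}\sum_{(\alpha,\beta)\in\Phi}p_\alpha xp_\beta$, verifies additivity of $P_\Gamma$ directly from this description, extends to a positive operator, and then checks $P_\Gamma^2=P_\Gamma$ by comparing finite sums. In contrast, you first \emph{construct} the band projection abstractly as $\mathcal{P}_\Gamma=\bigvee_{(\alpha,\beta)\in\Gamma}L_{p_\alpha}R_{p_\beta}$ in the complete Boolean algebra of band projections on $A$, and then identify $\mathcal{P}_\Gamma(x)$ with the pointwise supremum $u$ via the band decomposition $x=u+(x-u)$, using $B_\Gamma^{\,d}=\bigcap_{(\alpha,\beta)\in\Gamma}B_{\alpha,\beta}^{\,d}$.

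Your approach is more conceptual: once $\mathcal{P}_\Gamma$ is defined, there is nothing to check about linearity or idempotency, and the only real work is the identification $\mathcal{P}_\Gamma(x)=P_\Gamma(x)$, which your band-decomposition argument handles cleanly without ever invoking the Riesz--Kantorovich formulae or the preceding lemma about finite sums (you use only the disjointness of the $L_{p_\alpha}R_{p_\beta}$, which is the easy half of that lemma). The paper's approach, on the other hand, is more self-contained and stays closer to the element level in $A$: it never appeals to the completeness of the Boolean algebra of bands on $A$ or to the identity $\bigl(\bigcup B_{\alpha,\beta}\bigr)^{d}=\bigcap B_{\alpha,\beta}^{\,d}$, and it mirrors more directly the original argument for $\Lr X$ in \cite{munoz-lahoz_tradacete2024}.
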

\begin{proof}
    Note first that, using previous lemma,
    \begin{equation}\label{eq:ordlim}
    P_\Gamma x=\bigvee_{(\alpha ,\beta )\in \Gamma } p_\alpha xp_\beta
    =\sup_{\Phi \in \Fin(\Gamma )}
    \bigvee_{(\alpha ,\beta )\in \Phi }^{}p_\alpha xp_\beta
    =\sup_{\Phi \in \Fin(\Gamma )}
    \sum_{(\alpha ,\beta )\in \Phi }^{}p_\alpha xp_\beta .
    \end{equation}
    It follows from the definition that $P_\Gamma (x+y)\le P_\Gamma
    (x)+P_\Gamma (y)$. For every $\Phi,\Psi \in \Fin(\Gamma )$:
    \begin{align*}
        \sum_{(\alpha ,\beta )\in \Phi }^{}p_\alpha xp_\beta+
        \sum_{(\gamma ,\delta )\in \Psi  }^{}p_\gamma  yp_\delta&\le
        \sum_{(\alpha ,\beta )\in \Phi \cup \Psi }^{}p_\alpha xp_\beta
        +\sum_{(\gamma ,\delta )\in \Phi \cup \Psi }^{}p_\gamma
        yp_\delta \\ &=\sum_{(\alpha ,\beta)\in \Phi \cup \Psi
    }^{}p_\alpha (x+y)p_\beta \le P_\Gamma (x+y).
    \end{align*}
    Taking suprema separately in $\Phi $ and $\Psi $ we conclude that
    $P_\Gamma $ is additive. Hence it extends to a unique positive
    operator $P_\Gamma \colon A\to A$. It is clear from the definition that $0\le P_\Gamma \le
    I_A$. To show that it is a band projection, it only remains to
    prove that it is idempotent. By previous inequality,
    $P_\Gamma ^2\le P_\Gamma $. To see the other inequality, note that
    for an arbitrary $\Phi \in \Fin(\Gamma )$:
    \begin{align*}
        P_\Gamma (P_\Gamma (x))\ge \sum_{(\alpha ,\beta )\in \Phi
        }^{}p_\alpha P_\Gamma (x)p_\beta &\ge \sum_{(\alpha ,\beta
        )\in \Phi }^{} p_\alpha \bigg( \sum_{(\gamma  ,\delta  )\in \Phi 
        }^{}p_\gamma xp_\delta  \bigg) p_\beta \\
        &=\sum_{(\alpha ,\beta )\in \Phi  }^{}\sum_{(\gamma ,\delta
        )\in \Phi  }^{}p_\alpha p_\gamma xp_\delta p_\beta \\
        &=\sum_{(\alpha ,\beta )\in \Phi }^{}p_\alpha
        xp_\beta .
    \end{align*}
    Inequality $P_\Gamma ^2\ge P_\Gamma $ follows from taking supremum
    on $\Phi \in \Fin(\Gamma )$.
\end{proof}

\begin{defn}
    Let $A$ be a Banach lattice algebra and let $P\colon A\to A$ be a
    band projection. If there exist $(p_\lambda )_{\lambda \in \Lambda
    }\subseteq BP_l(A)\cap BP_r(A)$ such that $p_\alpha p_\beta
    =\delta _{\alpha \beta }p_\alpha $ for all $\alpha ,\beta \in
    \Lambda $, and such that $P=P_\Gamma $ for some $\Gamma \subseteq
    \Lambda \times \Lambda $ as defined in \cref{eq:innerbp}, then we
    say that $P$ is an \emph{inner band projection}. We say that the
    band $P(A)$ is an \emph{inner projection band}.
\end{defn}

\begin{example}
    Let $K$ be a compact Hausdorff space. Even though $C(K)$ may not
    be order complete, it still makes sense to consider inner band
    projections, as long as the supremum in \cref{eq:innerbp} is
    well-defined. In fact, in $C(K)$, any band projection is inner,
    because it consists in multiplying by the characteristic function
    of a clopen set, which is an order idempotent.
\end{example}

\begin{rem}
    Let $A$ be an order complete Banach lattice algebra.
    Since in $\Lr A$ the supremum of an increasing net is taken pointwise,
    equation \eqref{eq:ordlim} implies
    \[
    P_\Gamma =\olim_{\Phi \in \Fin(\Gamma )}\sum_{(\alpha ,\beta )\in
    \Phi }^{}L_\alpha R_\beta,
    \]
    where $\olim$ denotes the limit of the net in the order convergence (see
    \cite[Section 2.4]{troitsky}).
    In particular, if $A$ is order continuous, we have
    \[
    P_\Gamma (x)=\lim_{\Phi \in \Fin(\Gamma )}\sum_{(\alpha ,\beta
    )\in \Phi }^{}p_\alpha xp_\beta =\sum_{(\alpha ,\beta )\in \Gamma
    }^{}p_\alpha xp_\beta .
    \]
\end{rem}

\begin{prop}
    Let $A$ be an order complete Banach lattice algebra, and let $(p_\lambda
    )_{\lambda \in \Lambda }\subseteq BP_l(A)\cap BP_r(A)$ be such
    that $p_\alpha p_\beta =\delta _{\alpha \beta }p_\alpha $.
    The set $\{\, P_\Gamma  : \Gamma \subseteq \Lambda \times \Lambda
    \, \} $ forms a Boolean algebra with maximum element $P_{\Lambda
    \times \Lambda }$, minimum $P_\O$ and operations
    \[
    P_\Gamma \wedge P_\Delta =P_{\Gamma \cap \Delta },\;
    P_\Gamma \vee P_\Delta =P_{\Gamma \cup  \Delta },\;
    \overline{P}_\Gamma =P_{\overline{\Gamma }},\quad \text{for
    }\Gamma ,\Delta \subseteq \Lambda \times \Lambda ,
    \]
    where $\overline{\Gamma }=\Lambda \times \Lambda \setminus \Gamma
    $ denotes the complementary set.
\end{prop}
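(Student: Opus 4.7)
The plan is to identify each $P_\Gamma$ with the supremum, computed in $\Lr A$, of the family of pairwise disjoint band projections $\{L_{p_\alpha} R_{p_\beta} : (\alpha, \beta) \in \Gamma\}$, so that the claimed Boolean algebra sits naturally inside the complete Boolean algebra of band projections on the order complete Banach lattice $A$. To this end, first I would note that the proof of the preceding lemma already shows that $L_{p_\alpha} R_{p_\beta}$ and $L_{p_\gamma} R_{p_\delta}$ are disjoint band projections on $A$ whenever $(\alpha, \beta) \neq (\gamma, \delta)$. Equation \eqref{eq:ordlim} then exhibits $P_\Gamma$ as the order limit of the increasing, bounded net $\{\sum_{(\alpha, \beta) \in \Phi} L_{p_\alpha} R_{p_\beta}\}_{\Phi \in \Fin(\Gamma)}$, so
\[
P_\Gamma = \bigvee_{(\alpha, \beta) \in \Gamma} L_{p_\alpha} R_{p_\beta}
\]
holds in $\Lr A$, and the right-hand side is itself a band projection on $A$.

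With this identification, the easier identities come first. For the supremum, $P_{\Gamma \cup \Delta}$ dominates both $P_\Gamma$ and $P_\Delta$ by construction, and conversely any band projection above $P_\Gamma$ and $P_\Delta$ dominates every generator $L_{p_\alpha} R_{p_\beta}$ with $(\alpha, \beta) \in \Gamma \cup \Delta$, hence dominates their supremum $P_{\Gamma \cup \Delta}$. Once $\vee$ and $\wedge$ have been verified, the identities
\[
P_\Gamma \wedge P_{\overline{\Gamma}} = P_{\Gamma \cap \overline{\Gamma}} = P_\O = 0 \quad \text{and} \quad P_\Gamma \vee P_{\overline{\Gamma}} = P_{\Gamma \cup \overline{\Gamma}} = P_{\Lambda \times \Lambda}
\]
will witness $P_{\overline{\Gamma}}$ as the Boolean complement of $P_\Gamma$ inside the set $\{P_{\Gamma'} : \Gamma' \subseteq \Lambda \times \Lambda\}$, with minimum $P_\O = 0$ and maximum $P_{\Lambda \times \Lambda}$.

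The main obstacle is the infimum identity $P_\Gamma \wedge P_\Delta = P_{\Gamma \cap \Delta}$. I would reduce this to showing $B_\Gamma \cap B_\Delta = B_{\Gamma \cap \Delta}$ for the associated ranges $B_{\Gamma'} = \range P_{\Gamma'}$. The inclusion $\supseteq$ is immediate since $P_{\Gamma \cap \Delta} \le P_\Gamma, P_\Delta$. For $\subseteq$, given a positive $x \in B_\Gamma \cap B_\Delta$, the key observation is that for any $(\gamma, \delta) \notin \Delta$ the band projection $L_{p_\gamma} R_{p_\delta}$ is disjoint from every generator indexed by $\Delta$, hence from their supremum $P_\Delta$; thus $\range L_{p_\gamma} R_{p_\delta} \subseteq \ker P_\Delta$, which forces $L_{p_\gamma} R_{p_\delta}(x) = 0$. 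Combined with the representation
\[
x = P_\Gamma(x) = \sup_{\Phi \in \Fin(\Gamma)} \sum_{(\alpha, \beta) \in \Phi} L_{p_\alpha} R_{p_\beta}(x),
\]
in which every summand with $(\alpha, \beta) \in \Gamma \setminus \Delta$ vanishes, this yields $x = P_{\Gamma \cap \Delta}(x)$ and hence $x \in B_{\Gamma \cap \Delta}$. Once this is established, the Boolean algebra axioms for $\{P_\Gamma\}$ follow at once from those of $2^{\Lambda \times \Lambda}$ transported through $\Gamma \mapsto P_\Gamma$.
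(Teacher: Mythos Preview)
Your argument is correct and complete. The key identification $P_\Gamma=\bigvee_{(\alpha,\beta)\in\Gamma}L_{p_\alpha}R_{p_\beta}$ in $\Lr A$ is justified exactly as you say, and your use of disjointness of the generators to force $L_{p_\gamma}R_{p_\delta}(x)=0$ for $x\in B_\Delta$ and $(\gamma,\delta)\notin\Delta$ is sound, since disjoint band projections have mutually annihilating ranges.

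The route differs from the paper's in emphasis. The paper computes the composition $P_\Gamma P_\Delta$ directly: it applies each $L_{p_\alpha}R_{p_\beta}$ to the order limit defining $P_\Delta(x)$ and uses order continuity of $L_{p_\alpha}R_{p_\beta}$ to evaluate the result termwise, obtaining $P_\Gamma P_\Delta=P_{\Gamma\cap\Delta}$ immediately; only then does it turn to the join, first in the disjoint case $\Gamma\cap\Delta=\O$ and then in general via a three-piece decomposition. You instead work entirely inside the complete Boolean algebra of band projections on $A$: the join formula drops out from the universal property of suprema, and the meet is handled by comparing ranges rather than by computing a composition. Your approach is a bit more structural and gets the join for free; the paper's is more hands-on and yields the explicit identity $P_\Gamma P_\Delta=P_{\Gamma\cap\Delta}$ as its first step, which is perhaps the more informative formula. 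Either way the Boolean algebra structure follows.
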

\begin{proof}
    On the set of inner band projections, consider the usual order of
    operators. Let $\Gamma ,\Delta \subseteq \Lambda \times \Lambda $.
    We first want to prove that $P_{\Gamma }\wedge P_\Delta =P_{\Gamma
    \cap \Delta }$. For every $x \in A_+$, the net $(\sum_{(\gamma
    ,\delta )\in \Psi }^{} p_\gamma xp_\delta )_{\Psi \in \Fin(\Delta
    )}$ increases to $P_\Delta (x)$.
    For every $(\alpha ,\beta )\in \Gamma $, using that band
    projections are order continuous:
    \[
    L_{p_\alpha }R_{p_\beta }(P_\Delta (x))=\olim_{\Psi \in
    \Fin(\Delta )} \sum_{(\gamma ,\delta )\in \Psi }^{}L_{p_\alpha
    }R_{p_\beta }(p_\gamma xp_\delta )=
    \begin{cases}
        p_\alpha xp_\beta &\text{if }(\alpha ,\beta )\in \Delta ,\\
        0&\text{if }(\alpha ,\beta )\not\in \Delta .
    \end{cases}
    \]
    Therefore
    \[
    P_\Gamma (P_\Delta (x))=\bigvee_{(\alpha ,\beta )\in \Gamma }
    L_{p_\alpha }R_{p_\beta }(P_\Delta (x))=\bigvee_{(\alpha ,\beta
    )\in \Gamma \cap \Delta } p_\alpha xp_\beta =P_{\Gamma \cap \Delta
    }(x),
    \]
    as wanted

    To check the formula for the supremum, first consider the
    situation $\Gamma \cap \Delta =\O$. In this case, $P_\Gamma
    P_\Delta =0$, and
    \[
    P_\Gamma +P_\Delta =P_\Gamma \vee P_\Delta \le P_{\Gamma \cup \Delta }
    \]
    because both $P_\Gamma \le P_{\Gamma \cup \Delta }$ and $P_\Delta
    \le P_{\Gamma \cup \Delta }$. To see the reverse inequality, take
    finite sets $\Phi \in \Fin(\Gamma )$ and $\Psi \in \Fin(\Delta )$.
    Using that they are disjoint:
    \[
    P_\Gamma (x)+P_\Delta (x)\ge \sum_{(\alpha ,\beta )\in \Phi }^{}
    p_\alpha xp_\beta +\sum_{(\gamma ,\delta )\in \Psi }^{}p_\gamma
    xp_\delta =\sum_{(\alpha ,\beta )\in \Phi \cup \Psi }^{}p_\alpha
    xp_\beta.
    \]
    The right hand side is an arbitrary finite subset of $\Gamma \cup
    \Delta $; taking supremum over these yields $P_\Gamma
    +P_\Delta \ge P_{\Gamma \cup \Delta }$. In particular, when
    $\Delta
    =\overline{\Gamma }$, we get the desired formula for the complementary.
    Finally, for arbitrary $\Gamma $ and $\Delta $ (not necessarily
    disjoint), write their union as the three disjoint unions $\Gamma \cup \Delta =(\overline{\Gamma }\cap
    \Delta )\cup (\Gamma \cap \overline{\Delta })\cup (\Gamma \cap
    \Delta )$ and apply the previous formulae.
\end{proof}

\begin{rem}
    When $A=\Lr X$, for some order complete Banach lattice $X$, the
    Boolean algebra of inner band projections is isomorphic to
    $2^{\Lambda \times \Lambda }$, according to \cref{prop:innerbp_boole}. In the general case, however, this
    need not be true. In \cref{ex:pathologicalc}, the
    only inner band projection is the zero projection, even though there are many pairwise
    disjoint left and right band projections.
\end{rem}

We finish with an illustration of inner band projections in a
Banach lattice algebra without identity. This example also shows that an
analogous of \cref{thm:innerbp_all} for this general setup is not
straightforward.

\begin{example}
    Consider the 3-dimensional Banach lattice algebra
    \begin{align*}
        A&=\barespn \left\{ \begin{pmatrix} 1&0&0\\0&0&0\\0&0&0
                \end{pmatrix},\begin{pmatrix} 0&0&0\\0&1&0\\0&0&0
                \end{pmatrix},\begin{pmatrix} 0&0&0\\0&0&0\\1&0&0
                \end{pmatrix}  \right\}\\ &=\left\{\, \begin{pmatrix} x&0&0\\0&y&0\\z&0&0 \end{pmatrix}  :
        x,y,z \in \R \, \right\} \subseteq M_3(\R),
    \end{align*}
    where the order, product and norm are inherited from $M_3(\R)$
    (see \cref{ex:oinotbp}). It
    is not difficult to check that this Banach lattice algebra does
    not have an identity.
    By direct computation, one can also check that
    \[
        BP_l(A)\cap BP_r(A)=\left\{\,  \begin{pmatrix}
        0&0&0\\0&0&0\\0&0&0 \end{pmatrix} ,p_1=\begin{pmatrix}
        1&0&0\\0&0&0\\0&0&0 \end{pmatrix},p_2=\begin{pmatrix}
        0&0&0\\0&1&0\\0&0&0 \end{pmatrix},\begin{pmatrix}
        1&0&0\\0&1&0\\0&0&0 \end{pmatrix}  \, \right\} .
    \]
    Since $p_1ap_2=0=p_2ap_1$ for $a \in A$, the only inner band projections besides
    the zero projection are $a\mapsto p_1ap_1$,
    $a\mapsto p_2ap_2$, and their sum; explicitly, the only inner band
    projections are
    \[
        \begin{pmatrix} x&0&0\\0&y&0\\z&0&0 \end{pmatrix} \mapsto
        \begin{pmatrix} x&0&0\\0&0&0\\0&0&0 \end{pmatrix} ,\quad
        \begin{pmatrix} x&0&0\\0&y&0\\z&0&0 \end{pmatrix} \mapsto
        \begin{pmatrix} 0&0&0\\0&y&0\\0&0&0 \end{pmatrix} ,\quad
        \begin{pmatrix} x&0&0\\0&y&0\\z&0&0 \end{pmatrix} \mapsto
        \begin{pmatrix} x&0&0\\0&y&0\\0&0&0 \end{pmatrix} .
    \]
    It is easy to check that
    \[
        \begin{pmatrix} x&0&0\\0&y&0\\z&0&0 \end{pmatrix} \mapsto
        \begin{pmatrix} 0&0&0\\0&0&0\\z&0&0 \end{pmatrix}
    \]
    defines a band projection that is not inner.
\end{example}

\section*{Acknowledgements}
The author is very thankful to Pedro Tradacete for valuable
discussions, ideas and corrections.
Research partially supported by an FPI--UAM 2023 contract funded by
Universidad Autónoma de Madrid.

\printbibliography

\end{document}